\newtheorem{th.}{Theorem}[section]
\newtheorem{thm}[th.]{Theorem}
\newtheorem{lemma}[th.]{Lemma} 
\newtheorem{proposition}[th.]{Proposition}
\newtheorem{corollary}[th.]{Corollary}
\newtheorem{thmx}{Theorem}
\theoremstyle{definition}
\newtheorem{definition}[th.]{Definition}
\numberwithin{equation}{section}
\newcommand{\Q}{\mathbb{Q}}
\newcommand{\N}{\mathbb{N}}
\newcommand{\Z}{\mathbb{Z}}
\newcommand{\C}{\mathbb{C}}
\newcommand{\R}{\mathbb{R}}
\newcommand{\G}{\Gamma}
\newcommand{\kt}{\mathfrak{t}}
\newcommand{\kg}{\mathfrak{g}}
\newcommand{\kp}{\mathfrak{p}}
\newcommand{\ku}{\mathfrak{u}}
\newcommand{\ks}{\mathfrak{s}}
\newcommand{\ka}{\mathfrak{a}}
\newcommand {\cB} {{\mathcal B}}
\newcommand {\cD} {{\mathcal D}}
\newcommand {\cE} {{\mathcal E}}
\newcommand {\cF} {{\mathcal F}}
\newcommand {\cI} {{\mathcal I}}
\newcommand {\cN} {{\mathcal N}}
\newcommand {\cP} {{\mathcal P}}
\newcommand {\cR} {{\mathcal R}}
\newcommand {\cS} {{\mathcal S}}
\newcommand {\cZ} {{\mathcal Z}}
\DeclareMathOperator{\supp}{supp}
\DeclareMathOperator{\SL}{SL}
\DeclareMathOperator{\PSL}{PSL}
\DeclareMathOperator{\GL}{GL}
\DeclareMathOperator{\SO}{SO}
\DeclareMathOperator{\Lie}{Lie}
\DeclareMathOperator{\Ad}{Ad}
\DeclareMathOperator{\diag}{diag}
\newcommand{\bA}{{\bf A}}
\newcommand{\bD}{{\bf D}}
\newcommand{\bG}{{\bf G}}
\newcommand{\bH}{{\bf H}}
\newcommand{\bL}{{\bf L}}
\newcommand{\bM}{{\bf M}}
\newcommand{\bS}{{\bf S}}
\newcommand{\bT}{{\bf T}}
\newcommand{\bU}{{\bf U}}
\newcommand{\bQ}{{\bf Q}}
\newcommand{\bP}{{\bf P}}
\newcommand{\bV}{{\bf V}}
\newcommand{\bX}{{\bf X}}
\newcommand{\dd}{\,\mathrm{d}}
\DeclareMathOperator{\vol}{vol}
\title[Integrability of Siegel transforms and an application]{Integrability of Siegel transforms and an application}
\author{Ren\'e Pfitscher}
\address{Ren\'e Pfitscher. School of Mathematical Sciences, University of Science and Technology of China}
\email{pfitscher@ustc.edu.cn}
\thanks{School of Mathematical Sciences, University of Science and Technology of China}
\keywords{Diophantine approximation, homogeneous dynamics, geometry of numbers, discrete subgroups of Lie groups}
\subjclass{Primary 11J83; Secondary 37A25, 11H55, 22E40}
\begin{document}

\begin{abstract}
We establish sharp algebraic criteria for the $L^p$-integrability, for $p = 1, 2, \infty$, of a natural generalization of the Siegel transform to the setting of rational representations of semisimple algebraic $\Q$-groups, extending Siegel’s analytic work in the geometry of numbers. 

As an application, we derive an effective asymptotic formula for the number of rational approximations of bounded height to almost every real point on a rank-one flag variety at the Diophantine exponent. The argument combines the integrability criterion with effective equidistribution estimates for translated orbits of maximal compact subgroups, a result of independent interest.  
\end{abstract}

\maketitle
\setcounter{tocdepth}{1}
\tableofcontents

\section{Introduction}
The Siegel transform, introduced in 1945 by Siegel \cite{Siegel45}, maps a function of sufficient decay on the Euclidean space $\mathbb{R}^n$ to a function on the moduli space of unimodular lattices $\Omega = \mathrm{SL}_n(\mathbb{R}) / \mathrm{SL}_n(\mathbb{Z})$. Let $\cP(\Z^n)$ denote the set of primitive elements of $\Z^n$ and let $B_c^{\infty}(\R^n)$ be the space of Borel measurable bounded compactly supported functions $f : \R^n \rightarrow \C$. Then, for every $f \in B_c^{\infty}(\R^n)$, the primitive Siegel transform $S f : \Omega \rightarrow \C$ of $f$ is defined by
\begin{equation} \label{eq:primitive_Siegel_Transform}
\forall \, g \in \mathrm{SL}_n(\mathbb{R}), \qquad S f(g\Z^n) = \sum_{\bm{v} \in \cP(\Z^n)} f (g \bm{v}).
\end{equation}
Let $\mu_{\Omega}$ be the unique $\SL_{n}(\R)$-invariant probability measure on $\Omega$, let $\zeta$ be the Riemann zeta function and let $\lambda_{\R^n}$ be the usual Lebesgue measure on $\R^n$. Siegel's mean value formula \cite{Siegel45} expresses the average of $Sf$ in terms of the average of $f$: 
\begin{equation} \label{eq:primitive_Siegel_Transform_formula}
\int_{\Omega} S f \, \dd \mu_{\Omega} = \frac{1}{\zeta(n)} \int_{\R^n} f \, \dd \lambda_{\R^n}.
\end{equation}
Later, extending Siegel’s result, Rogers \cite{Rogers55} proved a $k$-th moment formula for the Siegel transform for $k$ up to $n-1$. A remarkable application of the second moment formula to the geometry of numbers was given by Schmidt \cite{Schmidt60b}, who derived an asymptotic formula for counting lattice points in an expanding family of sets in $\R^n$ from the variance bound
\begin{equation} \label{eq:primitive_Siegel_Transform_variance}
\int_{\Omega} \left| S f - \frac{1}{\zeta(n)} \int_{\R^n} f \, \dd \lambda_{\R^n} \right|^2 \dd \mu_{\Omega} \, \ll \, \int_{\R^n} |f|^2 \, \dd \lambda_{\R^n}.
\end{equation}
In other words, the \emph{centered} Siegel transform $\overline{S} f = Sf - 1/\zeta(n) \int_{\R^n} f \, \dd \lambda_{\R^n}$ extends to a bounded linear operator 
\[
\overline{S} : L^2(\R^n) \rightarrow L^2(\Omega). 
\]
The variance bound \eqref{eq:primitive_Siegel_Transform_variance} also yields an alternative proof of Schmidt’s strengthening \cite{Schmidt60a} of Khintchine’s theorem \cite{Khintchine26} in metric Diophantine approximation on $\R^n$ and its projective counterpart $\mathbb{P}(\R^n)$. There has been an active line of research extending classical results in Diophantine approximation from Euclidean space to other varieties, such as spheres \cite{AG22, KM15, KY23, Ouaggag23}, projective quadrics \cite{SK18, deSaxce22b, FKMS22}, Grassmannians \cite{deSaxce22a}, and more general flag varieties \cite{deSaxce20}. 

The purpose of this paper is to study fundamental integrability properties of a natural extension of the Siegel transform \eqref{eq:primitive_Siegel_Transform} from the Euclidean space to the setting of generalized flag varieties. Our results have an application to metric Diophantine approximation on rank-one flag varieties. The proof of this application relies in addition on the effective single and double equidistribution property for expanding orbits of maximal compact subgroups, a result of independent interest. 

\subsection{Main results}
Let $\bG$ be a connected simply-connected almost $\Q$-simple $\Q$-group and let $\bP$ be a proper parabolic $\Q$-subgroup of $\bG$. We denote algebraic varieties defined over $\mathbb{Q}$ by bold letters and their sets of real points by ordinary letters. For instance, we write $G = \bG(\R)$ to denote the group of real points of $\bG$. Let $\G \subset \bG(\Q)$ be an arithmetic subgroup of $G$. Let $\pi_{\chi} : \bG \rightarrow \GL(\bV_{\chi})$ be an irreducible representation defined over $\Q$ which is generated by a line $\bD_{\chi}$ defined over $\Q$ of highest weight $\chi$ such that $\bP = \mathrm{Stab}_{\bG} (\bD_{\chi})$ (Section~\ref{sec:Reps}). In particular, the space of real points $X = \bX(\R)$ of the generalized flag variety $\bX = \bG / \bP$ embeds into the projective space $\mathbb{P}(V_{\chi})$. We fix a highest weight vector $\bm{e}_{\chi} \in \bD_{\chi}(\Q)$ and define $\widetilde{X}$ to be the orbit $\widetilde{X} = G \, \bm{e}_{\chi} \subset V_{\chi}$. We refer to $\widetilde{X}$ as the \emph{cone over $X$ relative to $\chi$}. Fix a $\G$-stable lattice $\bV_{\chi}(\Z) \subset \bV_{\chi}(\Q)$ of $V_{\chi}$ and denote by $\cP_{\chi}$ the set of primitive elements of $\bV_{\chi}(\Z) \cap \widetilde{X}$. Let $B_c^{\infty}(\widetilde{X})$ be the space of Borel measurable bounded compactly supported complex-valued functions $f : \widetilde{X} \rightarrow \C$.
\begin{definition} [Siegel transform]
For every $f \in B_c^{\infty}(\widetilde{X})$, we define the \emph{Siegel transform} $S_{\chi} f : \Omega \rightarrow \C$ of $f$ by
\begin{equation*} \label{def:Siegel-Transform}
\forall \, g \in G, \quad S_{\chi} f(g \G) = \sum_{\bm{v} \in \cP_{\chi}} f(g \bm{v}). 
\end{equation*} 
\end{definition}

Let $\mu_{\Omega}$ be the unique $G$-invariant Borel probability measure on the homogeneous space $\Omega = G/\G$. We will answer the question: \emph{For any $p = 1,2, \infty$, what are necessary and sufficient conditions for $S_{\chi}$ to map $B_c^{\infty}(\widetilde{X})$ into $L^p(\Omega)$?} 

In our first result, the equivalences $(1)$ - $(4)$ are likely known to experts; the formula \eqref{eq:WeilIntegrationFormula*} below is a consequence of a general integration formula due to Weil \cite[Theorem~2.51]{Folland15}. Let $\bL = \mathrm{Stab}_{\bG} (\bm{e}_{\chi}) \subset \bP$. 

\begin{thmx} [$L^1$-integrability] \label{thm:L1} 
The following assertions are equivalent.
\begin{enumerate}[label=(\arabic*)]
\item The Siegel transform $S_\chi$ maps $B_c^{\infty}(\widetilde{X})$ into $L^1(\Omega)$.
\item There exists a unique (up to scaling) $G$-invariant Radon measure $\lambda_{\widetilde{X}}$ on $\widetilde{X}$, the Siegel transform $S_\chi$ extends to a bounded operator $S_{\chi} : L^1(\widetilde{X}) \rightarrow L^1(\Omega)$, and $\lambda_{\widetilde{X}}$ can be normalized so that we have a convergent mean value formula:
\begin{equation} \label{eq:WeilIntegrationFormula*}
\forall \, f \in L^1(\widetilde{X}), \quad \int_{\Omega} S_\chi f \, \dd \mu_{\Omega} = \int_{\widetilde{X}} f \, \dd \lambda_{\widetilde{X}}.
\end{equation}
\item The Lie group $L = \bL(\R)$ is unimodular and $\G_L = \G \cap L$ is a lattice in $L$.
\item The parabolic $\Q$-subgroup $\bP$ of $\bG$ is maximal.
\item There exists $\varepsilon > 0$ such that $S_\chi$ maps $B_{c}^{\infty}(\widetilde{X})$ into $L^{1+\varepsilon}(\Omega)$.
\end{enumerate}
\end{thmx}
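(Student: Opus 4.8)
The plan is to establish the cycle of implications $(2)\Rightarrow(1)\Rightarrow(3)\Rightarrow(4)\Rightarrow(2)$, which already proves the equivalence of $(1)$--$(4)$, and then to bring $(5)$ into the circle via $(4)\Rightarrow(5)\Rightarrow(1)$. The implications $(2)\Rightarrow(1)$ and $(5)\Rightarrow(1)$ are immediate, the former being a restriction of the operator to $B_c^\infty(\widetilde X)\subseteq L^1(\widetilde X)$ (legitimate once $\lambda_{\widetilde X}$ exists and is Radon), the latter using that $\mu_\Omega$ is a probability measure, so $L^{1+\varepsilon}(\Omega)\subseteq L^1(\Omega)$. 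The workhorse throughout is an unfolding identity. Identifying $\widetilde X$ with $G/L$ via $g\mapsto g\bm{e}_\chi$, decomposing $\cP_\chi$ into its finitely many (by reduction theory) $\G$-orbits $\G\bm{v}_1,\dots,\G\bm{v}_k$ with stabilizers $\bL_i=\mathrm{Stab}_{\bG}(\bm{v}_i)$ and $\G_{L_i}=\G\cap L_i$, and folding the sum defining $S_\chi$ over the tower $\G_{L_i}\subseteq\G\subseteq G$, Weil's integration formula gives, for every nonnegative Borel $f$ on $\widetilde X$ and with values in $[0,\infty]$,
\[
\int_\Omega S_\chi f\,\dd\mu_\Omega \;=\; \sum_{i=1}^{k}\int_{G/\G_{L_i}} f(g\bm{v}_i)\,\dd\nu_i ,
\]
where $\nu_i$ is a $G$-invariant Radon measure on $G/\G_{L_i}$ (existing because $G$ is unimodular and $\G_{L_i}$ is discrete). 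Since $\chi$ is a nontrivial $\Q$-character of the connected group $\bP$, its restriction $\chi\colon P\to\R^\times$ has image containing $\R_{>0}$, so the ray $\R_{>0}\bm{e}_\chi$ meets $\widetilde X$ in a primitive integral vector; thus one of the $\bm{v}_i$, say $\bm{v}_1$, satisfies $\mathrm{Stab}_\G(\bm{v}_1)=\G_L$.

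For $(1)\Rightarrow(3)$ I apply the identity to a fixed nonnegative $f\in B_c^\infty(\widetilde X)$ that is strictly positive on a nonempty open set: finiteness of the left-hand side forces each summand to be finite, and since $g\mapsto f(g\bm{v}_i)$ is right $L_i$-invariant while the fibre of $G/\G_{L_i}\to G/L_i$ over each point is a copy of $L_i/\G_{L_i}$, a positive-measure set downstairs would contribute infinitely unless $\vol(L_i/\G_{L_i})<\infty$; hence $\G_{L_i}$ is a lattice in $L_i$, in particular $\G_L$ is a lattice in $L$, and a Lie group admitting a lattice is unimodular, giving $(3)$. For $(3)\Rightarrow(4)$, Borel--Harish-Chandra applied to the $\Q$-group $\bL$ shows that $\G_L$ being a lattice forces $X^*(\bL)_\Q=0$; from the exact sequence $1\to\bL\to\bP\xrightarrow{\chi}\mathbb{G}_m\to1$ one gets $\operatorname{rank}X^*(\bL)_\Q=\operatorname{rank}X^*(\bP)_\Q-1$, and since $\operatorname{rank}X^*(\bP)_\Q\ge1$ with equality precisely when $\bP$ is a maximal proper parabolic $\Q$-subgroup, vanishing of $X^*(\bL)_\Q$ yields $(4)$. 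For $(4)\Rightarrow(2)$, maximality makes $X^*(\bP)_\Q$ one-dimensional, spanned up to scaling by $\chi$, so the character $2\rho_{\bP}\in X^*(\bP)_\Q$ (sum of the roots in the unipotent radical of $\bP$) is a rational multiple of $\chi$ and hence trivial on $\bL=\ker\chi$; combined with the observation that $L$ acts on the line $\kp/\kl$ through a character of finite order, this shows $\Delta_L\equiv1$, so $\widetilde X=G/L$ carries a $G$-invariant Radon measure $\lambda_{\widetilde X}$, unique up to scaling. The same applies to every $\bL_i$ (a $\Q$-form of $\bL$ with $X^*(\bL_i)_\Q=0$), so all $\G_{L_i}$ are lattices by Borel--Harish-Chandra; identifying each $G/L_i$ with $\widetilde X$ as a $G$-space, the right-hand side of the unfolding identity becomes $\bigl(\sum_i\vol(L_i/\G_{L_i})\bigr)\int_{\widetilde X}f\,\dd\lambda_{\widetilde X}$, and after rescaling $\lambda_{\widetilde X}$ we obtain \eqref{eq:WeilIntegrationFormula*} first for nonnegative $f$, then for all $f\in L^1(\widetilde X)$ by splitting into parts; the pointwise bound $|S_\chi f|\le S_\chi|f|$ upgrades this to $\|S_\chi f\|_{L^1(\Omega)}\le\|f\|_{L^1(\widetilde X)}$, so $S_\chi$ extends to a bounded operator $L^1(\widetilde X)\to L^1(\Omega)$, and since $B_c^\infty(\widetilde X)\subseteq L^1(\widetilde X)$ this also gives $(1)$.

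It remains to prove $(4)\Rightarrow(5)$, which is the quantitative heart of the matter. Here I would use reduction theory for $\G$ inside $G$ to reduce the estimate to a single translated Siegel set $\mathfrak{S}$, parametrize $\mathfrak{S}$ by its $A$-coordinate $a$, and establish a pointwise geometry-of-numbers bound $S_\chi f(g\G)\ll_f\Phi(\log a)$ obtained by counting the points of each $\G\bm{v}_i$ falling into a fixed bounded window of $\widetilde X$ as seen from $g$. Since the Haar measure on $\mathfrak{S}$ decays like $e^{-\langle 2\rho,\log a\rangle}$ toward the cusp, and since at exponent $1$ the resulting integral is exactly the convergent $L^1$-quantity guaranteed by the maximality of $\bP$, and since the convergence condition is an open linear inequality relating the growth exponent of $\Phi$ to $2\rho$, it persists at exponent $1+\varepsilon$ for all sufficiently small $\varepsilon>0$. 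Together with $(5)\Rightarrow(1)$ this closes the cycle.

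The only genuinely delicate step is this last implication $(4)\Rightarrow(5)$: the equivalences $(1)$--$(4)$ are formal consequences of the existence theory of invariant measures on homogeneous spaces, of Weil's formula, and of the Borel--Harish-Chandra theorem, whereas the \emph{strict} gain from $L^1$ to $L^{1+\varepsilon}$ hinges on a \emph{sharp} pointwise upper bound for $S_\chi f$ on Siegel sets — sharp enough that its growth exponent sits strictly inside the cone cut out by the cuspidal decay of $\mu_\Omega$ — which is precisely where the geometry of the cone $\widetilde X$ and a careful analysis of its successive minima have to be carried out.
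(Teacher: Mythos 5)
Your treatment of the equivalence $(1)\Leftrightarrow(2)\Leftrightarrow(3)\Leftrightarrow(4)$ is sound and follows the same unfolding--plus--Borel--Harish-Chandra backbone as the paper, though you package the characters differently: you use the exact sequence $1\to\bL\to\bP\xrightarrow{\chi}\mathbb{G}_m\to1$ to get $X^*(\bL)_\Q=\{1\}\Leftrightarrow X^*(\bP)_\Q=\Z\chi\Leftrightarrow\bP$ maximal, while the paper uses the Langlands decomposition $\bP_\theta=\bM_\theta\bT_\theta\bU_\theta$ and tracks the central torus $(\bT_\theta\cap\ker\chi)^\circ$. Both are fine. (Your direct $(1)\Rightarrow(3)$ does elide the point that one first needs unimodularity of $L_i$ before the disintegration of $\nu_i$ along $G/\G_{L_i}\to G/L_i$ into $L_i$-invariant fibre measures is available; the paper sidesteps this by first producing the $G$-invariant Radon measure on $G/L$, reading off unimodularity from it, and only then unfolding. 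This is a small repairable wrinkle, not the main issue.)

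The genuine gap is your $(4)\Rightarrow(5)$. Your sketch proposes a direct Siegel-set integration: bound $S_\chi f(g\G)\ll_f\Phi(\log a)$ pointwise on a Siegel set, observe that the resulting integral converges at exponent $1$ because ``it is exactly the convergent $L^1$-quantity guaranteed by maximality,'' and then win $\varepsilon$ of room by openness of a linear condition. This does not hold with the kind of pointwise bound that is actually available. The paper's Lemma~\ref{lem:Upper-Bound-Siegel} gives $|S_\chi f(g\G)|\ll_{\supp f}\|f\|_\infty\,\lambda_\chi(g\G)^{-\beta_\chi d}$, and the cusp-measure estimate \eqref{eq:Measure-Cusp-Saxce} is $\mu_\Omega(\Omega_\delta)\asymp\delta^{\beta_\chi d}$; together these make the pointwise majorant $\lambda_\chi^{-\beta_\chi d}$ already fail to be $L^1(\Omega)$, let alone $L^{1+\varepsilon}$. (In the classical $\SL_n$ case this is precisely the fact that the shortest-vector bound on the Siegel transform is \emph{not} integrable over $\Omega$; the true $L^1$-convergence is an equality coming from Weil's formula, not a domination estimate.) So ``at exponent $1$ the integral converges'' is false for the bound you have, and there is no openness to exploit. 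You would need a substantially sharper pointwise bound encoding all successive minima of $g\bV_\chi(\Z)$ on the Siegel set, and producing and controlling such a bound in this generality is precisely the work that your sketch does not supply.

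The paper proves $(4)\Rightarrow(5)$ by a different mechanism that you are missing: it splits $|E_\chi f|^{1+\varepsilon}=E_\chi f\cdot(E_\chi f)^\varepsilon$, unfolds only the first factor along $\G/\G_L$, and uses the crude pointwise bound only on the second factor. This reduces the problem to showing $\int_{L/\G_L}\lambda_\chi(l\G)^{-\varepsilon'}\,\dd\mu_{L/\G_L}<+\infty$ for small $\varepsilon'>0$, an integral over $L/\G_L$ rather than over $\Omega$. That integral is then evaluated using reduction theory for $\G_H$ inside the semisimple part $H$ of the Levi of $P$ (not for $\G$ in $G$), where the exponential decay $e^{\rho_H}$ of the measure on a Siegel set of $H$ dominates $\lambda_\chi^{-\varepsilon'}$ as soon as $\varepsilon'$ is small. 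Without this unfolding-of-one-factor step (or a serious quantitative replacement for it) the implication $(4)\Rightarrow(5)$ is not established by your argument.
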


\begin{thmx} [$L^{\infty}$-integrability] \label{thm:Linfty} 
The following assertions are equivalent.
\begin{enumerate} [label=(\arabic*)]
\item The Siegel transform $S_\chi$ maps $B_c^{\infty}(\widetilde{X})$ into $L^{\infty}(\Omega)$.
\item The $\mathbb{Q}$-rank of $\mathbf{G}$ is $1$.
\item The discrete group $\G_L$ is a cocompact lattice in $L$.
\end{enumerate}
\end{thmx}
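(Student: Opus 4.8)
The plan is to prove the cycle $(2) \Rightarrow (3) \Rightarrow (1) \Rightarrow (2)$, using Theorem A as a black box wherever the maximality of $\bP$ (or equivalently the lattice property of $\G_L$) is needed. For $(2) \Rightarrow (3)$: if $\Q\text{-rank}(\bG) = 1$, then every proper parabolic $\Q$-subgroup is both maximal and a minimal parabolic, so in particular $\bP$ is maximal and Theorem A applies, giving that $\G_L$ is a lattice in $L$. To upgrade this to cocompactness, I would argue that $\bL$ contains no nontrivial $\Q$-split torus: indeed $\bL = \mathrm{Stab}_{\bG}(\bm e_\chi)$ is contained in $\bP$, and the maximal $\Q$-split torus of $\bG$ has rank one with its nontrivial part acting on the highest weight line $\bD_\chi$ by the character $\chi \neq 0$, hence cannot fix the vector $\bm e_\chi$; so the $\Q$-split rank of $\bL$ is zero. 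By Borel--Harish-Chandra / Godement's compactness criterion, an arithmetic subgroup of a reductive $\Q$-group with no nontrivial $\Q$-characters and $\Q$-split rank zero is cocompact; after checking $\bL$ is reductive (it is the stabilizer of a highest weight vector, so its unipotent radical is trivial — this is where I'd cite the standard structure theory, cf. Section~\ref{sec:Reps}) and has no nontrivial $\Q$-characters (which also follows from unimodularity of $L$ established in Theorem A), we conclude $\G_L \backslash L$ is compact.

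For $(3) \Rightarrow (1)$: assume $\G_L$ is a cocompact lattice in $L$. I would use the Weil integration formula from Theorem A$(2)$, which identifies $\int_\Omega S_\chi f \, \dd\mu_\Omega$ with $\int_{\widetilde X} f \, \dd\lambda_{\widetilde X}$, together with an explicit unfolding of $S_\chi f(g\G) = \sum_{\bm v \in \cP_\chi} f(g\bm v)$. The key point is a geometric separation estimate: for $f$ supported in a fixed compact set $K \subset \widetilde X$, the number of $\bm v \in \cP_\chi$ with $g\bm v \in K$ is bounded uniformly in $g$. This should follow because the map $G \to \widetilde X$, $g \mapsto g\bm e_\chi$ is proper modulo $L$ — its fibers are $L$-cosets — and $\cP_\chi$, being a discrete subset of the orbit $\widetilde X$ of the arithmetic group $\G$ modulo $\G_L$, has the property that each $\G_L$-orbit's image in $\widetilde X$ lands in a compact set, with only finitely many such orbits meeting $g^{-1}K$ because $\G_L \backslash L$ is compact and $\widetilde X \cong L \backslash G$ carries a proper distance; cocompactness of $\G_L$ in $L$ is exactly what makes $\cP_\chi$ ``uniformly discrete relative to the $L$-direction'' so that the counting function $S_\chi \1_K$ is bounded. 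Hence $S_\chi$ maps $B_c^\infty(\widetilde X)$ into $L^\infty(\Omega)$.

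For $(1) \Rightarrow (2)$: I would prove the contrapositive. Suppose $\Q\text{-rank}(\bG) \geq 2$. First, $\bP$ need not be maximal; but by $(5) \Leftrightarrow (1)$ of Theorem A one already knows $S_\chi$ fails to land even in $L^{1+\eps}(\Omega)$ unless $\bP$ is maximal, so the only case left is $\bP$ maximal with $\Q\text{-rank}(\bG) \geq 2$ — then $\G_L$ is a non-cocompact lattice in $L$ by Theorem A$(3)$, and $L\backslash G$ (equivalently $\widetilde X$) is noncompact with a cusp. I would exhibit a sequence $g_n \to \infty$ in $\Omega$ along which $S_\chi \1_K(g_n\G) \to \infty$ for a suitable fixed compact $K \subset \widetilde X$: concretely, take $g_n = a_n$ in a $\Q$-split torus direction not lying in $\bL$ and not purely in the $\chi$-direction (this requires $\Q\text{-rank} \geq 2$ so that there is ``room'' — a cocharacter of $\bP$'s Levi that is nontrivial but pairs trivially with $\chi$), so that $a_n$ contracts more and more lattice points $\bm v \in \cP_\chi$ into $K$; the number of such points grows because the non-cocompact cusp of $\G_L \backslash L$ supplies infinitely many $\G_L$-orbits of lattice points that get pushed into $K$ as $n \to \infty$. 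This unboundedness contradicts $(1)$.

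The main obstacle I expect is the counting/geometry step in $(3) \Rightarrow (1)$ and its counterpart in $(1) \Rightarrow (2)$: making precise the reduction theory on $\widetilde X \cong L\backslash G$ and controlling $\#\{\bm v \in \cP_\chi : g\bm v \in K\}$ uniformly (resp. showing it blows up). This is essentially a Siegel-domain / reduction-theory argument relative to the non-split-reductive subgroup $\bL$, and the cleanest route is probably to reduce to the classical situation by choosing coordinates adapted to $\bP$, writing $g = \kappa a u$ in a Langlands-type decomposition and tracking how the $a$-component dilates the lattice $\cP_\chi$; the compactness (or not) of $\G_L\backslash L$ then directly governs whether the fiberwise count is finite uniformly or unbounded.
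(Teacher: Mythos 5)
Your cycle $(2)\Rightarrow(3)\Rightarrow(1)\Rightarrow(2)$ is a reasonable plan, and your $(1)\Rightarrow(2)$ argument is essentially what the paper does (the paper builds a cocharacter $Y\in\ka$ with $\chi(Y)=0$, $\beta(Y)>0$ for some simple root $\beta\notin\theta$, and then contracts the infinite orbit $(U_{-\beta}\cap\G)\bm{e}_\chi\subset\cP_\chi$ towards $\bm{e}_\chi$). However, there are two genuine problems with the other two legs.

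\emph{The claim that $\bL$ is reductive is false.} You write that the stabilizer of a highest weight vector has trivial unipotent radical. In fact the opposite holds: the full unipotent radical $\bU$ of $\bP$ fixes $\bm{e}_\chi$, so $\bL^\circ=\bM\bU$, and $\bU$ is nontrivial whenever $\bP$ is a proper parabolic. Already for $\bG=\SL_2$ and the standard representation, $\bL$ is exactly the unipotent upper-triangular group. The cocompactness of $\G_L$ in $L$ when $\Q$-rank is $1$ therefore cannot be deduced by applying Godement's criterion to a reductive $\bL$; the correct argument (as in the paper) is that in rank one the Levi factor $\bM$ is $\Q$-anisotropic, so $\bL^\circ=\bM\bU$ is a semidirect product of a $\Q$-anisotropic reductive group and a unipotent group, hence contains no nontrivial $\Q$-split torus, and the Borel--Harish-Chandra compactness criterion applies to $\bL^\circ$ directly.

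\emph{The step $(3)\Rightarrow(1)$ is where the real work is, and the proposal does not supply the key mechanism.} You argue that cocompactness of $\G_L\backslash L$ makes the fiberwise count $\#\{\bm{v}\in\cP_\chi : g\bm{v}\in\cK\}$ uniformly bounded, by way of ``uniform discreteness relative to the $L$-direction.'' But uniform boundedness of this count over all $g\in G$ is precisely the delicate point: as $g$ moves into the cusp, $g^{-1}\cK$ becomes a thin, long region of bounded volume, and whether lattice points concentrate in it is not controlled merely by cocompactness of $\G_L\backslash L$ or by any metric discreteness of $\cP_\chi$ in $\widetilde X$ (indeed $\#(\cP_\chi\cap B_{\widetilde X}(r))\asymp r^{\beta_\chi d}$ is unbounded). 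What the paper proves instead is $(2)\Rightarrow(1)$ directly: by reduction theory one may take $g=k a(y) n c\gamma$ in a Siegel set, and the rank-one Bruhat decomposition $G=P\sqcup PwP$ implies that every $\bm{v}\in\cP_\chi$ other than $\pm\bm{e}_\chi$ has $|\langle\bm{v},w\bm{e}_\chi\rangle|\ge 1$, hence $\|a(y)\bm{v}\|\ge y^{-\chi(Y_\alpha)}\to\infty$; so for $y$ large the only surviving lattice points in a fixed ball are $\pm\bm{e}_\chi$. This expansion argument, made possible by the two-element Weyl group, is the content you are missing. Without it, the $(3)\Rightarrow(1)$ step remains an assertion.
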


Let $\bP_0$ be a minimal parabolic $\Q$-subgroup of $\bG$ contained in $\bP$ and let $\bT$ be a maximal $\Q$-split torus of $\bG$ contained in $\bP_0$. Let $\Phi$ be the root system of $\bG$ relative to $\bT$ and let $\Delta \subset \Phi$ be the corresponding set of simple roots. For each subset $\theta$ of $\Delta$, write $\bP_{\theta}$ for the associated standard parabolic $\Q$-subgroup of $\bG$. Let $W$ be the Weyl group of $\bG$ relative to $\bT$. For every $w \in W$, we define $\bL_w = \bL \cap x_w \bL x_w^{-1}$, where $x_w \in \cN_{\bG}(\bT)(\Q)$ is a representative of $w$, and denote by $X^*(\bL_w^{\circ})_{\Q}$ the group of $\Q$-characters of its identity component. 

\begin{thmx} [$L^2$-integrability] \label{thm:L2} 
Suppose that the Siegel transform $S_\chi$ maps $B_c^{\infty}(\widetilde{X})$ into $L^{2}(\Omega)$. Then 
\[
\forall \, w \in W, \quad X^*(\bL_w^\circ)_{\Q} = \{1\}.
\]
In particular, the parabolic $\Q$-subgroup $\bP$ is maximal and the simple root $\alpha \in \Delta$, satisfying that $\bP = \bP_{\Delta \smallsetminus \{\alpha \}}$, has at most one neighbor in the associated Dynkin diagram.
\end{thmx}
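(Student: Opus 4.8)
The plan is to prove the displayed vanishing assertion $X^*(\bL_w^\circ)_\Q = \{1\}$ for every $w \in W$ in contrapositive form — by extracting, inside the second moment $\int_\Omega |S_\chi f|^2\,\dd\mu_\Omega$, the contribution of a single $\G$-orbit on $\cP_\chi \times \cP_\chi$ with stabilizer $\bL_w$ and showing it is already infinite when $X^*(\bL_w^\circ)_\Q \neq \{1\}$ — and then to deduce the two ``in particular'' claims from it.

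Assume $X^*(\bL_w^\circ)_\Q \neq \{1\}$ for some $w$. I would first record that $\widetilde X = G \bm e_\chi$ is a genuine cone: since $\bP \supseteq \bT$ and $\chi|_\bT \neq 1$ we have $\chi(\bP(\R)) \supseteq \R_{>0}$, hence $\R_{>0}\bm e_\chi \subset \bP(\R)\bm e_\chi \subset \widetilde X$ and so $\R_{>0}\widetilde X = \widetilde X$. Rescaling $\bm e_\chi$, I may then assume $\bm e_\chi \in \cP_\chi$ and choose $c_w \in \Q_{>0}$ with $\bm v_w := c_w x_w \bm e_\chi \in \cP_\chi$; the pair $\bm z_w = (\bm e_\chi, \bm v_w)$ lies in $\cP_\chi \times \cP_\chi$ with $\mathrm{Stab}_\bG(\bm z_w) = \bL \cap x_w \bL x_w^{-1} = \bL_w$ and $\mathrm{Stab}_\G(\bm z_w) = \G_{L_w} := \G \cap \bL_w(\R)$. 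Now take $f \in B_c^\infty(\widetilde X)$ nonnegative with $f \equiv 1$ on a compact neighbourhood of $\{\bm e_\chi, \bm v_w\}$; since all summands in $|S_\chi f(g\G)|^2 = \sum_{\bm p, \bm q \in \cP_\chi} f(g\bm p)\,f(g\bm q)$ are nonnegative, keeping only those with $(\bm p, \bm q)$ in the $\G$-orbit of $\bm z_w$ and applying the standard orbit-unfolding identity yields
\[
\int_\Omega |S_\chi f|^2\,\dd\mu_\Omega \;\ge\; \int_{G/\G_{L_w}} f(g\bm e_\chi)\, f(g\bm v_w)\,\dd\mu(g\G_{L_w}).
\]
The crucial point is that the integrand is right-$\bL_w(\R)$-invariant — $\bL_w(\R)$ fixes both $\bm e_\chi$ and $\bm v_w$ — and dominates the indicator of a nonempty open right-$\bL_w(\R)$-invariant subset of $G$; disintegrating the $G$-invariant measure $\mu$ along $G/\G_{L_w} \to G/\bL_w(\R)$, whose fibres are copies of $\bL_w(\R)/\G_{L_w}$, the right-hand side is bounded below by a positive constant times $\vol(\bL_w(\R)/\G_{L_w})$. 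Since $\G_{L_w}$ is an arithmetic subgroup of $\bL_w$ with $X^*(\bL_w^\circ)_\Q \neq \{1\}$, the Borel--Harish-Chandra theorem makes this volume infinite, so $S_\chi f \notin L^2(\Omega)$, a contradiction. Hence $X^*(\bL_w^\circ)_\Q = \{1\}$ for all $w$.

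For the first consequence: as $\mu_\Omega$ is a probability measure, $L^2(\Omega) \subset L^1(\Omega)$, so $S_\chi$ maps $B_c^\infty(\widetilde X)$ into $L^1(\Omega)$ and $(1)\Leftrightarrow(4)$ of Theorem~\ref{thm:L1} gives that $\bP$ is maximal; write $\bP = \bP_{\Delta \smallsetminus \{\alpha\}}$, put $\theta = \Delta \smallsetminus \{\alpha\}$, and let $N(\alpha) \subset \theta$ be the neighbours of $\alpha$ in the Dynkin diagram. Suppose, for the second consequence, that $|N(\alpha)| \ge 2$, and take $w = s_\alpha$ (the simple reflection attached to $\alpha$). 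Then $\bL_{s_\alpha} \subset \bQ := \bP_\theta \cap s_\alpha \bP_\theta s_\alpha^{-1}$, an intersection of two parabolic $\Q$-subgroups containing $\bT$, with Levi decomposition $\bQ = \bM_\bQ \ltimes \bU_\bQ$ and $\bT \subseteq \bM_\bQ$; the $\bT$-roots of $\bM_\bQ$ form the symmetric part $\Phi_\theta \cap s_\alpha(\Phi_\theta)$ of the root set of $\bQ$, which — using that the simple-root coefficients of any root of $\Phi_\theta$ share a common sign while $\langle \beta, \alpha^\vee\rangle < 0$ for $\beta \in N(\alpha)$ — equals $\{\gamma \in \Phi_\theta : \langle \gamma, \alpha^\vee\rangle = 0\} = \Phi_{\theta \smallsetminus N(\alpha)}$. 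Consequently the maximal $\Q$-split central torus of $\bM_\bQ$ has dimension $|\Delta| - |\theta \smallsetminus N(\alpha)| = 1 + |N(\alpha)|$, i.e.\ $\mathrm{rank}\,X^*(\bQ)_\Q = 1 + |N(\alpha)|$. Finally, $\bL_{s_\alpha}$ is the common kernel on $\bQ$ of the two characters by which $\bQ$ acts on the lines $\R\bm e_\chi$ and $\R x_{s_\alpha}\bm e_\chi$, so $\bQ / \bL_{s_\alpha}$ embeds in $\mathbb{G}_m^2$ and the restriction map $X^*(\bQ)_\Q \to X^*(\bL_{s_\alpha}^\circ)_\Q$ has kernel of rank at most $2$; therefore $\mathrm{rank}\,X^*(\bL_{s_\alpha}^\circ)_\Q \ge |N(\alpha)| - 1 \ge 1$, contradicting the vanishing assertion. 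Hence $\alpha$ has at most one neighbour.

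The step I expect to cause the most trouble is the divergence of the isolated orbital integral: one must justify the single-orbit unfolding and make the disintegration over $G/\bL_w(\R)$ precise, in particular when $\bL_w(\R)$ is non-unimodular (in which case $\G_{L_w}$ is a fortiori not a lattice and the fibre measure entering the disintegration is still infinite). Much of this should parallel the machinery already underlying the mean value formula~\eqref{eq:WeilIntegrationFormula*} of Theorem~\ref{thm:L1}. A subsidiary point is that the identification of the Levi root system of $\bQ = \bP_\theta \cap s_\alpha \bP_\theta s_\alpha^{-1}$ and the subsequent rank count must be carried out over $\Q$, where one checks that the presence of the anisotropic part of $\cZ_\bG(\bT)$ does not affect the count, since it contains no $\Q$-split central torus.
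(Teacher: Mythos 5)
Your argument is correct and reaches the same conclusion as the paper, but the route is genuinely different, particularly for the main vanishing claim.

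For the statement $X^*(\bL_w^\circ)_\Q = \{1\}$, the paper first proves an intermediate reduction (Lemma~\ref{lem:Period}), which converts the hypothesis ``$S_\chi(B_c^\infty) \subseteq L^2(\Omega)$'' into the $L^1$-bound $\int_{L/\G_L}|S_\chi f|\,\dd\mu_{L/\G_L} < \infty$; it then singles out the $\G_L$-orbit of $\lambda w\bm e_\chi$ inside $\cP_\chi$, observes that the resulting positive $L$-invariant functional on $B_c^\infty(\widetilde X)$ produces an $L$-invariant Radon measure on $L/L_w$, deduces the unimodularity of $L_w$ from that of $L$, unfolds once more to conclude that $\G\cap L_w$ is a lattice in $L_w$, and applies Borel--Harish-Chandra. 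You bypass Lemma~\ref{lem:Period} entirely: you contrapose, extract a single $\G$-orbit in $\cP_\chi\times\cP_\chi$ from the second moment, unfold once to $G/\G_{L_w}$, and argue divergence by disintegrating the $G$-invariant measure along $G/\G_{L_w}\to G/L_w$. This works, but the step you yourself flag as delicate is real: since $L_w$ may not be unimodular, $G/L_w$ need not carry a $G$-invariant measure, and the clean product decomposition $\mu_{G/\G_{L_w}} = \mu_{G/L_w}\otimes\mu_{L_w/\G_{L_w}}$ is unavailable. To close the gap one should choose a local section $U_w^-\to G$, note that the pullback of $\mu_G$ to $U_w^-\times L_w$ is right-$L_w$-invariant in the second factor (hence of the form $J(u)\,\dd u\otimes\dd\mu_{L_w}^{R}$ with $\mu_{L_w}^R$ a right Haar measure), push $\mu_{L_w}^R$ down to $H:=L_w/\G_{L_w}$, and then argue that this pushed-down mass is infinite whenever $X^*(\bL_w^\circ)_\Q\neq\{1\}$: in the unimodular case this is immediate from ``not a lattice,'' and in the non-unimodular case it follows because a finite positive mass $m$ on $H$ would satisfy $m = \Delta_{L_w}(h)\,m$ for all $h$, forcing $\Delta_{L_w}\equiv 1$. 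The paper's Riesz--Markov detour makes exactly this point ``for free,'' which is what Lemma~\ref{lem:Period} buys; your route trades that lemma for a more hands-on measure-theoretic step. For the ``in particular'' part, the paper proves the inclusion $\bL_{s_\alpha}\subset\bP_{\Delta\smallsetminus B(\alpha)}$ by a direct root computation and then compares $\Q$-ranks, whereas you work with $\bQ = \bP_\theta\cap s_\alpha\bP_\theta s_\alpha^{-1}$, identify its Levi root system as $\Phi(\theta\smallsetminus N(\alpha))$, read off $\mathrm{rank}\,X^*(\bQ)_\Q = 1 + |N(\alpha)|$, and then use the embedding $\bQ/\bL_{s_\alpha}\hookrightarrow\mathbb{G}_m^2$ to bound $\mathrm{rank}\,X^*(\bL_{s_\alpha}^\circ)_\Q$ from below. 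Since the Levi of $\bQ$ is exactly $\cZ(\bT_{\theta\smallsetminus N(\alpha)})$, the Levi of $\bP_{\Delta\smallsetminus B(\alpha)}$, the two rank counts are essentially equivalent; your version is perhaps slightly more self-contained.
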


We were unable to determine whether the converse statement is true: \emph{Assuming that for every $w \in W$, we have $X^*(\bL_w^\circ)_{\Q} = \{1\}$, does the Siegel transform $S_\chi$ map $B_c^{\infty}(\widetilde{X})$ into $L^2(\Omega)$?} 

For every $f \in B_c^{\infty}(\widetilde{X})$, define the \emph{centered Siegel transform} $\overline{S}_{\chi} f : \Omega \rightarrow \C$ of $f$ by
\[
\forall \, g \in G, \qquad \overline{S}_{\chi} f(g\G) = S_{\chi}f(g\G) - \int_{\widetilde{X}} f \, \dd \lambda_{\widetilde{X}}.
\] 
Beyond the case $p=q=1$ in Theorem~\ref{thm:L1}, it is natural to ask for which pairs $p,q\in[1,+\infty]$ the Siegel transform, or its centered counterpart, extends to a bounded linear operator $L^p(\widetilde{X}) \to L^q(\Omega)$. More specifically, we would like to include the following question, due to Saxc\'e, suggesting a fractional version of the variance bound \eqref{eq:primitive_Siegel_Transform_variance} that also takes into account point~(5) of Theorem~\ref{thm:L1} as well as Theorem~\ref{thm:L2}:
\emph{Assuming that the parabolic $\Q$-subgroup $\bP$ is maximal, does there exist $\varepsilon>0$ such that the centered Siegel transform $\overline{S}_{\chi}$ extends to a bounded linear operator}
\[
\overline{S}_{\chi} : L^{1+\varepsilon}(\widetilde{X}) \to L^{1+\varepsilon}(\Omega)\,?
\]

\subsection{Effective equidistribution of maximal compact subgroup orbits}
The fact that the Siegel transform $S_{\chi}$ maps $B_c^{\infty}(\widetilde{X})$ into $L^{1+\varepsilon}(\Omega)$ for some small $\varepsilon > 0$ when the parabolic subgroup $\bP$ is maximal (Theorem~\ref{thm:L1}), together with the effective single and double equidistribution property for translated orbits of maximal compact subgroups (Theorem~\ref{thm:Effective}), are the key analytic inputs for our Schmidt-type counting theorem at the Diophantine exponent on rank-one flag varieties. Before describing this application, let us state here our equidistribution result, which will be derived from an effective multiple equidistribution result for expanding translates of horospherical orbits due to Shi (see \cite[Theorem~1.5]{Shi21}). Let $K \subset G$ be a maximal compact subgroup, equipped with the Haar probability measure $\mu_K$. We write $\cS_r$ ($r \in \N^*)$ for the degree $r$ Sobolev norms on $C_c^{\infty}(\Omega)$ and $C^{\infty}(K)$ as defined in \eqref{eq:Norm}.

\begin{thmx} [Effective equidistribution of maximal compact subgroup orbits] \label{thm:Effective}
Suppose the action of $G$ on $\Omega = G/\G$ has a spectral gap. Let $A = \{a(y) : y \in \R_+^{\times} \}$ be an $\Ad$-diagonalizable one-parameter subgroup of $G$ such that $a(e)$ projects non-trivially to each simple factor of $G$. Then there exist a constant $c > 0$ and an integer $r \geq 1$ such that for every compact subset $Q \subset \Omega$, for all $f \in C^{\infty}(K)$, $\phi \in C_c^\infty(\Omega)$, $x \in Q$, and $y \geq 1$, we have
\begin{equation} \label{eq:Single-Eq}
\int_K f(k) \phi \bigl(a(y) k x \bigr) \dd \mu_K(k) = \int_K f \dd \mu_K \int_{\Omega} \phi \dd \mu_{\Omega} + O \big ( y^{-c} \cS_r(f) \cS_r(\phi) \big ),
\end{equation}
and, for all $f \in C^{\infty}(K)$, $\phi_1, \phi_2 \in C_c^\infty(\Omega)$, $x_1, x_2 \in Q$, 
and $y_2 \geq y_1 \geq 1$, we have
\begin{multline} \label{eq:Double-Eq}
\int_K f(k) \phi_{1}\bigl(a(y_1) k x_1 \bigr) \phi_{2} \bigl(a(y_2) k x_2 \bigr) d \mu_K(k) =  \int_K f \dd \mu_K \int_{\Omega} \phi_1 \dd \mu_{\Omega}  \int_{\Omega} \phi_2 \dd \mu_{\Omega} \\ + O \big ( \min \{y_1, y_2/y_1 \}^{-c} \cS_r(f) \cS_r(\phi_1) \cS_r(\phi_2) \big ),
\end{multline} 
where the implicit constant in $O(\cdot)$ depends only on $Q$.
\end{thmx}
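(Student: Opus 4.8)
The plan is to reduce both \eqref{eq:Single-Eq} and \eqref{eq:Double-Eq} to the effective multiple equidistribution of expanding translates of horospherical orbits, \cite[Theorem~1.5]{Shi21}, by ``unfolding'' the average over the compact group $K$ into integrals over pieces of an expanding horospherical orbit of a point in a compact set. The same unfolding serves both statements: \eqref{eq:Single-Eq} follows from Shi's theorem with a single test function (which is effective equidistribution of one expanding horosphere), and \eqref{eq:Double-Eq} from the version with two test functions. Throughout I may assume that $y$, respectively $\min\{y_1,y_2/y_1\}$, is large, since the estimate is trivial otherwise.

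First I would fix the algebraic picture. Write $a(y)=\exp((\log y)X_0)$ with $X_0\in\Lie(G)$ such that $\operatorname{ad}(X_0)$ is diagonalizable over $\R$; after conjugating $K$ one may assume $X_0$ lies in the Lie algebra of a maximal $\R$-split torus and that the Cartan involution $\theta$ with $K=G^\theta$ satisfies $\theta X_0=-X_0$. Let $U$ and $U^-$ be the horospherical subgroups of $G$ that are respectively expanded and contracted by $\Ad(a(y))$ as $y\to+\infty$, let $Z=Z_G(A)$, $P=ZU$, $P^-=ZU^-$, and $M=K\cap Z=Z_K(A)$, so that $\theta U=U^-$. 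The hypothesis that $a(e)$ projects nontrivially to each simple factor of $G$ forces $U$ to project nontrivially to each factor, which is precisely the non-degeneracy assumed in \cite[Theorem~1.5]{Shi21}, and the spectral gap hypothesis furnishes the polynomial rate there.

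Next I would carry out the unfolding. On the open dense big Bruhat cell $U^-ZU$, whose complement is $\mu_K$-null in $K$ (as $e$ lies in the big cell), every element factors uniquely as $k=u^-(k)\,z(k)\,u(k)$, and then $kP=u^-(k)P$; hence $K/M\cong G/P$ is parametrized, over its big cell, by $u^-(k)\in U^-$, and $\mu_K$ disintegrates there as a smooth positive density $\omega$ on $U^-$ (coming from the $K$-invariant measure on $G/P$) against Haar measure on the $M$-fibre. Since $\mathfrak{k}=\mathfrak{g}^\theta$ contains $\{W+\theta W:W\in\mathfrak{u}^-\}$, one can choose near the identity a smooth section $\sigma^-$ of $k\mapsto u^-(k)$ of the form $\sigma^-(u^-)=u^-\,z^-(u^-)\,u^\flat(u^-)$, where $z^-(u^-)=e+O(|u^-|^2)$ and $u^\flat\colon U^-\to U$ is a local diffeomorphism with differential $\theta|_{\mathfrak{u}^-}$ at $e$. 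Because $z^-(u^-)$ commutes with $a(y)$ and $a(y)u^-a(y)^{-1}\to e$ uniformly at a polynomial rate, one then obtains, for $u^-$ in a chart of radius $\eta$, $m\in M$ and $x$ in a compact set,
\begin{equation*}
\phi\bigl(a(y)\,\sigma^-(u^-)\,mx\bigr)=\phi\bigl(a(y)\,u^\flat(u^-)\,(mx)\bigr)+O\bigl((y^{-c_0}+\eta^2)\,\cS_r(\phi)\bigr),
\end{equation*}
the point being that the contracting factor $a(y)u^-a(y)^{-1}$ has moved to the far left, where it acts on $\Omega$ with a uniform bound, while the leftover left translation by $z^-(u^-)$ only costs its size. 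Covering $K$ by $O(\eta^{-\dim K})$ right translates of such charts, inserting this estimate, substituting $u=u^\flat(u^-)$, and using the right-invariance of $\mu_K$, the $K$-average becomes a finite sum of integrals $\int_U\rho(u)\,\phi(a(y)\,u\,x')\,\dd u$ (and $\int_U\rho(u)\,\phi_1(a(y_1)\,u\,x_1')\,\phi_2(a(y_2)\,u\,x_2')\,\dd u$ in the two-function case) with $\rho$ smooth supported in an $\eta$-ball, base points $x',x_i'\in KQ$, and total mass $\int_K f\,\dd\mu_K$. Applying \cite[Theorem~1.5]{Shi21} to each piece, the main terms reassemble into $\int_K f\,\dd\mu_K\prod_i\int_\Omega\phi_i\,\dd\mu_\Omega$, and the error is bounded by $\eta^{-N}y^{-c'}$, respectively $\eta^{-N}\min\{y_1,y_2/y_1\}^{-c'}$, times $\cS_r(f)\prod_i\cS_r(\phi_i)$; the range $\min\{y_1,y_2/y_1\}$ in the double estimate is exactly the decoupling window of Shi's multiple equidistribution, reflecting that the first translate must already be equidistributed and the two times must be well separated. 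Choosing $\eta$ to be a small fixed power of $y$, respectively of $\min\{y_1,y_2/y_1\}$, balances $\eta^{-N}y^{-c'}$ against $\eta^2$ and $y^{-c_0}$ and yields \eqref{eq:Single-Eq} and \eqref{eq:Double-Eq} with a smaller exponent $c>0$ and some fixed integer $r$; since every compact set involved is controlled by $KQ$, the implicit constants depend only on $Q$.

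The step I expect to be the main obstacle is the factor $z^-(u^-)$: because $[\mathfrak{u},\mathfrak{u}^-]$ meets $\mathfrak{z}$ nontrivially in any semisimple Lie algebra, $K$ does not lie inside $U^-U$ near the identity, so this $Z$-valued factor cannot be removed at no cost and persists as a left translation of size $O(\eta^2)$ that does not decay with the dynamical time. I would handle it by the slowly shrinking charts above, at the price of a worse exponent; an alternative, if available, is to retain $z^-(u^-)$ and invoke stability of Shi's estimates under smooth perturbations of the horospherical orbit. A further, routine point is that \cite[Theorem~1.5]{Shi21} must be applied with uniformity over the base points $mx$ as $m$ ranges over the compact group $M$, and over the family of densities $\rho$ produced by the disintegration and the partition of unity; this is either part of the cited statement or follows from it by a standard compactness-and-approximation argument.
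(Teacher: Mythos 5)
Your proof follows essentially the same route as the paper: locally parametrize the orbit $K/(K\cap P)$ by the expanding horospherical subgroup, push the central and contracting residual factors past $a(y)$ where they stay bounded, strip them off by Lipschitz continuity, change variables to reduce to Shi's effective (multiple) horospherical equidistribution, and shrink the chart radius as a power of the dynamical time to absorb the persistent residual error. The only cosmetic difference is in the chart: the paper uses a submanifold $S=\exp(V\cap\mathfrak s)\subset K$ with $\mathfrak k=\mathfrak s\oplus\mathfrak k_P$ and the coordinate map $s\mapsto u^-(s)$, bounding the whole residual $k_P(k)\,a(y)p(s(k))a(y)^{-1}$ by $O(r_0)$ in one step, while you parametrize by the Bruhat cell via a section $\sigma^-(u^-)=u^-\,z^-(u^-)\,u^\flat(u^-)$ and track the $Z$-component as $O(\eta^2)$ separately — a slightly sharper but equivalent estimate, and your self-identified remedy (letting $\eta$ shrink polynomially in the time) is exactly the paper's choice $r_0=\min\{y_1,y_2/y_1\}^{-c}$.
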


\subsection{Application to Diophantine approximation on flag varieties}
Let us now state our Schmidt-type counting theorem at the Diophantine exponent on rank-one flag varieties, which uses Theorems~\ref{thm:L1} and \ref{thm:Effective} as inputs. Many classical results in Diophantine approximation on the real line $\R$ or in Euclidean space $\R^n$ admit a dynamical reinterpretation in terms of properties of certain diagonal orbits in the space of lattices $\Omega = \SL_n(\R) / \SL_n(\Z)$; this is known as Dani's correspondence \cite{Dani85}. Via this dynamical reinterpretation and building on influential work of Margulis, Kleinbock and others \cite{KM96, KM98, KM99, FKMS22}, Saxc\'e \cite{deSaxce20} extended analogues of classical results to generalized flag varieties $\bX = \bG / \bP$ defined over $\Q$. First examples of such varieties include projective $n$-space $\mathbb{P}^n(\R)$, the Grassmann variety $\mathrm{Gr}_{\ell,n}(\R)$ of $\ell$-dimensional subspaces of $\R^n$, projective quadric hypersurfaces (that is, the solution set in $\mathbb{P}^n(\R)$ of a non-degenerate rational quadratic form in $n+1$ variables), and more general flag varieties, parametrizing flags of subspaces of a Euclidean space. 

Let $\psi : \N \rightarrow (0,+\infty)$ be a non-increasing function. By Khintchine's theorem~\cite{Khintchine26}, the inequality
\[
0 \leq q x - p < \psi(q)
\]
has infinitely (resp. at most finitely) many solutions $(p,q) \in \Z \times \N$ for almost every $x \in \R$, if the series $\sum_{q = 1}^{\infty} \psi(q)$ diverges (resp. converges). In the divergence case, Schmidt~\cite{Schmidt60a} strengthened Khintchine's theorem. More precisely, for every $x \in \R$ and $T \geq 1$, he considered the counting function
\begin{equation} \label{eq:SchmidtCountingFunction}
\cN_\psi(x,T) = \# \left \{ (p,q) \in \Z \times \N : 0 \leq q x - p < \psi(q), \, 1 \leq q < T \right \}
\end{equation}
and showed that for almost every $x \in \R$, $\cN_\psi(x,T)$ is asymptotically equal to $\sum_{1\leq q < T} \psi(q)$ as $T$ goes to infinity, with an explicit error term. In fact, Schmidt's result holds not only for the real line, but also for the Euclidean space $\R^n$ of any dimension $n \geq 1$. 

Our goal is to prove a version of this theorem, where the Euclidean space $\R^n$ is replaced by the space of real points $X = \bX(\R)$ of the generalized flag variety $\bX = \bG / \bP$ defined over $\Q$. We assume that $\bP$ is a maximal parabolic $\Q$-subgroup of $\bG$ with abelian unipotent radical. In particular, $\bX$ has $\Q$-rank $1$ and there exists a unique simple root $\alpha \in \Delta$ such that $\bP = \bP_{\Delta \smallsetminus \{\alpha\}}$. Let $Y$ be the unique element in the Lie algebra of $\bT(\R)$ such that 
\[
\alpha(Y) = -1 \quad \text{and} \quad \beta(Y) = 0 \quad \text{for all } \beta \in \Delta \smallsetminus \{\alpha\}.
\]
We suppose that the element $\exp(Y)$ projects non-trivially to each simple factor of $G$. Let $K$ be a maximal compact subgroup of $G$. Let $\sigma_X$ be the unique $K$-invariant probability measure on $X$. We equip $X$ with a $K$-invariant Riemannian distance $d(\cdot, \cdot)$ and the set of rational points $\bX(\Q)$ with a height function $H_\chi$ associated to an irreducible rational representation $\pi_{\chi} : \bG \rightarrow \GL(\bV_{\chi})$ which is generated by a unique rational line $\bD_{\chi}$ of highest weight $\chi$ such that $\mathrm{Stab}_{\bG} (\bD_{\chi}) = \bP$ (see Section \ref{sec:Reps}). By \cite[Th\'eor\`emes 2.4.5 et 3.2.1]{deSaxce20}, there exists a rational number $\beta_\chi \in \Q_{>0}$ such that, for every $c > 0$ and for $\sigma_X$-almost every $x \in X$, the inequality
\begin{equation} \label{eq:DiophantineExpo}
d(x,v) < c \, H_\chi (v)^{-\tau}
\end{equation}
admits infinitely (resp. at most finitely) many solutions $v \in \bX(\Q)$, if $\tau \leq \beta_\chi$ (resp. $\tau > \beta_\chi$). We refer to $\beta_\chi$ as the \emph{Diophantine exponent} of $X$ relative to $\chi$ and to $\tau \in [0,\beta_{\chi}]$ as an \emph{approximation exponent}.

In analogy to \eqref{eq:SchmidtCountingFunction}, for every constant $c > 0$, approximation exponent $\tau \in [0, \beta_{\chi}]$, element $x \in X$, and parameter $T \geq 1$, we define 
\[
\cN_{c,\tau}(x,T) = \# \left \{ v \in \bX(\Q) : d(x,v) < c \, H_{\chi}(v)^{-\tau}, \, 1 \leq H_{\chi}(v) < T \right \}.
\]
In \cite{Pfitscher24}, we provided an almost-sure asymptotic formula for $\cN_{c,\tau}(x,T)$ as $T \rightarrow +\infty$, with an explicit error term in the case where $\tau \in [0, \beta_{\chi})$. Our method did not yield an effective estimate when counting \emph{at the Diophantine exponent}, that is, when $\tau = \beta_{\chi}$. In our application, we upgrade our previous result to an effective asymptotic estimate. Our approach is inspired by a recent effective counting result due to Ouaggag \cite[Theorem~1.2]{Ouaggag23} for spheres, and our result may be viewed as a substantial generalization thereof. 

\begin{thmx}[Effective counting at the Diophantine exponent] \label{thm:Critical}
Let $d = \dim X$ be the dimension of $X$ and let $c > 0$. Then there exists an explicit constant $\varkappa > 0$ and $\varepsilon > 0$ such that for $\sigma_X$-almost every $x \in X$, as $T \rightarrow + \infty$,
\begin{equation} \label{eq:Thm_Critical}
\mathcal{N}_{c, \beta_{\chi}}(x, T) = \varkappa \, c^d \, \ln (T) \left ( 1 + O_x(\ln(T)^{- \varepsilon}) \right ).
\end{equation}
\end{thmx}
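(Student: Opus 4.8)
\medskip

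The plan is to reduce the counting problem at the Diophantine exponent to a double-equidistribution estimate for translated maximal-compact-subgroup orbits, combined with the $L^{1+\varepsilon}$-integrability of the Siegel transform (Theorem~A, point~(5)), and then to run a second-moment (Borel--Cantelli with variance) argument with quantitative control of the error. First I would set up Dani's correspondence in this flag-variety setting: the condition $d(x,v) < c\,H_\chi(v)^{-\beta_\chi}$ for $v \in \bX(\Q)$ of height in $[1,T)$ should translate, via the embedding $X \hookrightarrow \mathbb{P}(V_\chi)$ and the cone $\widetilde X = G\,\bm{e}_\chi$, into a condition on primitive lattice points $\bm{w} \in \cP_\chi$ lying in a shrinking-and-expanding region of $\widetilde X$ that depends on $x$. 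Concretely, writing $T = e^{s}$ and using the one-parameter subgroup $a(y) = \exp((\ln y) Y)$, the count $\cN_{c,\beta_\chi}(x,T)$ should be expressible, up to controlled boundary effects, as $S_\chi f_s\bigl(a(e^s) k_x \G\bigr)$ for a suitable family of functions $f_s$ on $\widetilde X$ with $\int_{\widetilde X} f_s \dd\lambda_{\widetilde X} \sim \varkappa\, c^d\, s$, where $k_x \in K$ is chosen so that $k_x \bm{e}_\chi$ points toward $x$. This is where I would use that $\exp(Y)$ projects non-trivially to each simple factor, matching the hypothesis of Theorem~D.

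\medskip

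Next I would integrate over $x$. Since $\sigma_X$ is the pushforward of $\mu_K$ under $k \mapsto k\cdot x_0$, averaging $\cN_{c,\beta_\chi}(x,T)$ over $x \in X$ becomes an integral of the form $\int_K S_\chi f_s\bigl(a(e^s) k x_0 \bigr)\dd\mu_K(k)$, to which the single equidistribution estimate \eqref{eq:Single-Eq} of Theorem~D applies once $S_\chi f_s$ is approximated in an appropriate norm by a smooth compactly supported function on $\Omega$. Here Theorem~A(5) is crucial: $S_\chi f_s \in L^{1+\varepsilon}(\Omega)$, which lets me truncate the sum over $\cP_\chi$ to a compact part of $\Omega$ with an error that is a small power of a cutoff parameter, and then mollify; the resulting main term is $\int_{\widetilde X} f_s \dd\lambda_{\widetilde X} = \varkappa\,c^d\,s\,(1+o(1))$, giving the expectation. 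For the variance I would compute the second moment $\int_K \bigl(S_\chi f_s(a(e^s)kx_0)\bigr)^2 \dd\mu_K(k)$, expand it as a double sum over pairs $(\bm{w}_1,\bm{w}_2) \in \cP_\chi \times \cP_\chi$, split into diagonal-ish and off-diagonal contributions, and apply the double equidistribution estimate \eqref{eq:Double-Eq} to the off-diagonal terms. The diagonal contribution and the terms where $\bm{w}_1,\bm{w}_2$ are proportional should contribute $O(s)$, i.e.\ of lower order than $(\varkappa c^d s)^2$, so the variance is $O(s + s^{2-c'})$ for some $c' > 0$ coming from the $y^{-c}$ and $\min\{y_1,y_2/y_1\}^{-c}$ rates; in particular the normalized variance decays like a power of $s$.

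\medskip

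Finally I would run the standard quantitative Borel--Cantelli / Chebyshev argument along a sparse exponentially-spaced sequence $T_j = e^{s_j}$ with $s_j = j^{\gamma}$ for suitable $\gamma$, using the variance bound to show $\cN_{c,\beta_\chi}(x,T_j)/(\varkappa c^d s_j) \to 1$ for $\sigma_X$-almost every $x$ with an explicit rate, and then interpolate between consecutive $T_j$ using monotonicity of $T \mapsto \cN_{c,\beta_\chi}(x,T)$ (the count is non-decreasing in $T$) together with the fact that $s_{j+1}/s_j \to 1$. Converting $s = \ln T$ back gives \eqref{eq:Thm_Critical} with $\ln(T)^{-\varepsilon}$ error.

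\medskip

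The main obstacle I expect is the \emph{non-uniformity} of the function $f_s$ on $\widetilde X$: at the Diophantine exponent $\tau = \beta_\chi$ the relevant regions in $\widetilde X$ degenerate as $s \to \infty$ (this is precisely why the naive method of \cite{Pfitscher24} failed at $\tau = \beta_\chi$), so the Sobolev norms $\cS_r(\phi)$ of the smoothed approximants to $S_\chi f_s$ grow with $s$, and one must check that this growth is polynomial in $s$ and hence dominated by the power saving $e^{-c s}$ coming from Theorem~D. Making this quantitative requires a careful analysis of how the supremum norm, the support, and the derivatives of $f_s$ (equivalently, the geometry of the shrinking targets on $X$ and their preimages on the cone) scale with $s$, together with the integrability exponent $1+\varepsilon$ from Theorem~A; balancing the truncation parameter, the mollification scale, and $s$ is the delicate bookkeeping step. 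A secondary difficulty is isolating and bounding the ``proportional pairs'' contribution to the second moment --- pairs $\bm{w}_1, \bm{w}_2 \in \cP_\chi$ with $\bm{w}_2 \in \Q\bm{w}_1$ --- which is not governed by equidistribution and must instead be estimated directly, using that on the primitive cone such coincidences force $\bm{w}_1 = \pm\bm{w}_2$ and hence reduce to a first-moment (single-equidistribution) bound.
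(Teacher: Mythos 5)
Your overall strategy (Siegel transform, equidistribution, Borel--Cantelli) is the right one, but there is a concrete gap in how you plan to apply the double equidistribution estimate, and it is precisely the gap that the paper's key geometric step is designed to fill.

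You propose to write $\cN_{c,\beta_\chi}(x,T)$ as $S_\chi f_s\bigl(a(e^s)k_x\G\bigr)$ for a single (growing) function $f_s$, and then to compute the variance by expanding the square as a double sum over pairs $(\bm{w}_1,\bm{w}_2)\in\cP_\chi\times\cP_\chi$ and applying the double equidistribution \eqref{eq:Double-Eq}. The problem is that in that expansion every term has the \emph{same} diagonal time $y_1=y_2=e^s$, and \eqref{eq:Double-Eq} only saves a factor $\min\{y_1,y_2/y_1\}^{-c}$; with $y_1=y_2$ this is $\min\{y_1,1\}^{-c}=1$, so Theorem~\ref{thm:Effective} gives no decay at all for your off-diagonal terms. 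The missing idea is the decomposition in Lemma~\ref{lem:Sandwich}: the region $\cE_{\beta_\chi}(T)$ is approximated by $\cE_{T,c}^+=\bigsqcup_{i=0}^{N-1}a(y_i)^{-1}\cF_c$ with $y_i=e^{\beta_\chi i}$ and $\cF_c$ a \emph{fixed} elementary domain, which turns the count into the Birkhoff-type sum $\sum_{i=0}^{N-1}S_\chi\1_{\cF_c}(a(y_i)k_x^{-1}\G)$. Now the second moment is a double sum over time indices $i,j$ with genuinely separated scales $y_i,y_j$, where \eqref{eq:Double-Eq} yields summable decay; and because $\cF_c$ is fixed, the Sobolev norms of the smoothed, truncated $S_\chi\1_{\cF_c}$ are \emph{uniform} in $T$, dissolving the ``Sobolev norms grow with $s$'' problem you correctly anticipate rather than requiring you to chase polynomial growth against exponential decay.

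A second, subtler gap is the moment exponent. You plan an honest variance ($L^2$) computation, but $S_\chi$ does not map into $L^2(\Omega)$ in general (Theorem~\ref{thm:L2} restricts this severely), so the $L^2$ truncation error cannot be controlled: in estimate~\eqref{eq:EstimateTruncation} one needs $p<1+\varepsilon_0$, where $1+\varepsilon_0$ is the integrability exponent from Theorem~\ref{thm:L1}(5), and $\varepsilon_0$ can be small. The paper therefore proves an $L^p$ moment bound for some $p\in(1,1+\varepsilon_0)$ --- using $L^p\le L^2$ monotonicity only on the already-smoothed-and-truncated piece where the $L^2$ expansion is legitimate --- and feeds it into Lemma~\ref{lem:Weyl-Metric}, a generalization of Harman's lemma from $p=2$ to $p\in(1,2]$. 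Your Chebyshev/sparse-sequence argument is structurally compatible with this, but you need the $L^p$ version of the Borel--Cantelli input, not the classical $L^2$ one, and the final error exponent $\tfrac{2}{p+1}+\varepsilon$ (rather than $\tfrac12+\varepsilon$) comes from this. Your remark about proportional pairs is correct but secondary; it is absorbed into the $i=j$ diagonal of the Birkhoff-time double sum once the decomposition is in place.
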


\subsection{Proof sketch of Theorem \ref{thm:Critical}}
Let us illustrate Theorem~\ref{thm:Critical} and its proof in the special case of the Grassmann variety $X = \mathrm{Gr}_{\ell, n}(\R)$ ($\ell, n \in \N, \, 1 \leq \ell < n$), parametrizing $\ell$-dimensional subspaces of the Euclidean space $\R^n$; this theorem also applies to projective quadric hypersurfaces and we refer the reader to \cite[Sections $1.2$ and $8$]{Pfitscher24} and the references therein. The argument involves introducing the Siegel transform in this specific setting, studying its analytic properties, and establishing equidistribution of expanding translates of orbits of maximal compact subgroups. 

Let $\bG = \SL_n$, let $\bT \leq \bG$ be the maximal $\Q$-split $\Q$-torus given by the subgroup of $\bG$ consisting of all diagonal matrices, and let $\bP_0$ be the Borel subgroup of $\bG$ consisting of all upper-triangular matrices. Let $\Phi = \Phi(\bG, \bT)$ be the associated root system with ordering induced by $\bP_0$, $\Delta = \{\alpha_1, \dots, \alpha_{n-1}\}$ the set of simple roots, and $\{\lambda_{1}, \dots, \lambda_{n-1} \}$ the set of fundamental $\Q$-weights. Fix $\alpha_{\ell} \in \Delta$ and let $\chi = \lambda_{\ell}$ be the associated fundamental $\Q$-weight. Recall that for all $a = \diag(a_1, \dots, a_n) \in \bT$, we have $\chi(a) = a_1 \cdots a_{\ell}$. Let $\bP = \bP_{\Delta \smallsetminus \{\alpha_{\ell}\}}$ be the corresponding standard parabolic $\Q$-subgroup. Then $\bP$ is the stabilizer in $\bG$ of the rational line spanned by the pure tensor $\bm{e}_{\chi} = \bm{e}_1 \wedge \dots \wedge \bm{e}_{\ell}$ in the $\ell$-th exterior power of the standard representation of $\bG$. The Siegel transform in this case is defined as follows. Let $\widetilde{X} = G \, \bm{e}_{\chi} \subset \bigwedge^\ell \R^n$: this is the set of all non-zero pure tensors of $\bigwedge^\ell \R^n$. Let $\bL = \mathrm{Stab}_{\bG} ( \bm{e}_{\chi})$, $\G = \SL_n(\Z)$ and let $\cP_{\chi}$ be the set of all primitive elements of $\bigwedge^\ell \Z^n$ that are contained in $\widetilde{X}$. The group $\G$ acts transitively on $\cP_{\chi}$: $\cP_{\chi} = \G \, \bm{e}_{\chi} \cong \G/\G_L$. Therefore, for every $f \in B_c^{\infty}(\widetilde{X})$, the Siegel transform $S_{\chi} f : G / \G \rightarrow \C$ is given by
\[
\forall \, g \in G, \qquad S_{\chi} f(g \G) = \sum_{\bm{v} \in \cP_{\chi}} f(g \bm{v}) = \sum_{\gamma \in \G / \G_L} f(g \gamma \bm{e}_{\chi}).
\]
Then $X = G/P$, viewed as a subvariety of $\mathbb{P}(\bigwedge^\ell \R^n)$ via the embedding $g P \mapsto g [\bm{e}_{\chi}]$ (here $[\bm{e}_{\chi}]$ denotes the projectivization of $\bm{e}_{\chi}$), is the Grassmann variety $\mathrm{Gr}_{\ell,n}(\R)$ of $\ell$-dimensional subspaces of $\R^n$. This is in accordance with Schmidt's paper \cite{Schmidt67}, where he used the Pl\"ucker embedding to define the height $H(v)$ of a rational subspace $v$ of $\R^n$: for $v \in \mathrm{Gr}_{\ell,n}(\Q)$ pick $\bm{v} \in \cP_{\chi}$ with $v = [\bm{v}]$ and set $H(v) = \|\bm{v}\|$, where $\|\cdot \|$ denotes the $\SO_n(\R)$-invariant norm on $\bigwedge^{\ell} \R^n$ induced from the standard Euclidean norm on $\R^n$. The distance used on $X$ is the usual Riemannian distance and we equip $X$ with the unique probability measure $\sigma_X$ invariant under the action of the maximal compact subgroup $K = \SO_n(\R) \leq G$. We study the approximation of a real subspace chosen randomly according to $\sigma_X$ by rational subspaces. Write $d$ for the dimension $\dim_\R X = \ell(n-\ell)$. The Diophantine exponent of $X = \mathrm{Gr}_{\ell,n}(\R)$ with respect to $\chi = \lambda_{\ell}$ is given by $\beta_{\chi} = \frac{n}{\ell(n-\ell)}$ (see \cite[Th\'eor\`eme~1]{deSaxce22a}). We wish to determine the asymptotic behavior of the counting function
\begin{equation} \label{eq:CountingFunction}
\cN_{c,\beta_{\chi}}(x,T) = \# \left \{ v \in \mathrm{Gr}_{\ell,n}(\Q) : d(x,v) < c \, H(v)^{-\beta_{\chi}}, \, H(v) < T \right \}
\end{equation}
as $T \rightarrow + \infty$, for $\sigma_X$-almost every $x \in X$. In fact, Theorem \ref{thm:Critical} takes the following form in this special case.

\begin{corollary} \label{cor:Grasssmannian}
Fix integers $1 \leq \ell < n$ and let $X = \mathrm{Gr}_{\ell,n}(\R)$ be the Grassmann variety of $\ell$-dimensional subspaces in $\R^n$. Then there exists an explicit constant $\varkappa > 0$ and $\varepsilon > 0$ such that for $\sigma_X$-almost every $x \in X$, as $T \rightarrow + \infty$,
\begin{equation} \label{cor:eq_Grassmannian}
\mathcal{N}_{c, \beta}(x, T) = \varkappa \, c^d \, \ln (T) \left ( 1 + O_x(\ln(T)^{- \varepsilon}) \right ).
\end{equation}
\end{corollary}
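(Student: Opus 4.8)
The plan is to obtain Corollary~\ref{cor:Grasssmannian} as a direct specialization of Theorem~\ref{thm:Critical}, so the main task is to check that the concrete Grassmannian setup just described satisfies all the hypotheses of that theorem. First I would verify that $\bG = \SL_n$ is connected, simply-connected and almost $\Q$-simple, and that $\bP = \bP_{\Delta \smallsetminus \{\alpha_\ell\}}$ is a maximal parabolic $\Q$-subgroup; maximality holds because the complement $\Delta \smallsetminus \{\alpha_\ell\}$ omits exactly one simple root. Next I would check that the unipotent radical of $\bP$ is abelian: for the standard maximal parabolic of $\SL_n$ of type $(\ell, n-\ell)$ the unipotent radical consists of block-upper-triangular matrices with a single off-diagonal block, which is visibly abelian. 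This places us in the regime of Theorem~\ref{thm:Critical}, where $\bX = \bG/\bP = \mathrm{Gr}_{\ell,n}$ has $\Q$-rank $1$.

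Second, I would identify the distinguished Lie algebra element $Y$ and confirm the projection hypothesis. Here $Y \in \Lie(\bT(\R))$ is characterized by $\alpha_\ell(Y) = -1$ and $\alpha_j(Y) = 0$ for $j \neq \ell$; concretely $Y = \diag(y, \dots, y, y - 1, \dots, y-1)$ (with $\ell$ entries equal to $y$) for the appropriate $y$ making the trace zero, namely $y = (n-\ell)/n$. Since $G = \SL_n(\R)$ has a single simple factor, the condition that $\exp(Y)$ projects non-trivially to each simple factor is automatic (it amounts to $\exp(Y) \neq \Id$, which is clear as $Y \neq 0$). Likewise $G$ has a spectral gap on $\Omega = \SL_n(\R)/\SL_n(\Z)$ by property $(T)$ when $n \geq 3$ and by the Selberg--Jacquet--Langlands bound when $n = 2$, so the hypotheses feeding into Theorem~\ref{thm:Effective} are met. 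I would then record that $K = \SO_n(\R)$ is a maximal compact subgroup, $\sigma_X$ is its unique invariant probability measure on $X$, the Riemannian distance $d$ is $K$-invariant, and the height $H_\chi = \|\cdot\|$ attached to $\chi = \lambda_\ell$ via the Plücker embedding into $\bigwedge^\ell \R^n$ is exactly Schmidt's height, so that $\cN_{c,\beta_\chi}(x,T)$ in \eqref{eq:CountingFunction} coincides with the counting function of Theorem~\ref{thm:Critical}.

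Third, I would pin down the Diophantine exponent: by \cite[Th\'eor\`eme~1]{deSaxce22a} one has $\beta_\chi = \beta = n/(\ell(n-\ell)) = n/d$ with $d = \dim_\R X = \ell(n-\ell)$, so that writing $\cN_{c,\beta}$ in place of $\cN_{c,\beta_\chi}$ is legitimate. With all hypotheses verified, Theorem~\ref{thm:Critical} applies verbatim and yields, for $\sigma_X$-almost every $x \in X$, the asymptotic
\[
\cN_{c,\beta}(x,T) = \varkappa \, c^d \, \ln(T)\bigl(1 + O_x(\ln(T)^{-\varepsilon})\bigr)
\]
as $T \to +\infty$, with $\varkappa > 0$ and $\varepsilon > 0$ the explicit constants furnished by that theorem, which is precisely \eqref{cor:eq_Grassmannian}.

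I do not expect a genuine obstacle here, since the corollary is an instantiation rather than a new theorem; the only point requiring a little care is the dictionary between the abstract objects of Theorem~\ref{thm:Critical} (the representation $\pi_\chi$, the cone $\widetilde X$, the height $H_\chi$, the exponent $\beta_\chi$) and their classical Grassmannian avatars (the Plücker embedding, pure tensors in $\bigwedge^\ell \R^n$, Schmidt's height, de Saxc\'e's exponent formula), together with the verification that the abelian-unipotent-radical hypothesis — not automatic for a general maximal parabolic — does hold for the type $(\ell, n-\ell)$ parabolic of $\SL_n$. Once these identifications are in place the proof is immediate.
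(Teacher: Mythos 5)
Your approach is exactly the paper's: the corollary is presented there as a direct specialization of Theorem~\ref{thm:Critical} to the Grassmannian data $\bG = \SL_n$, $\bP = \bP_{\Delta\smallsetminus\{\alpha_\ell\}}$, $\chi = \lambda_\ell$, with no separate proof given beyond the dictionary you spell out. Your verification of the hypotheses (maximality, abelian unipotent radical of the type-$(\ell,n-\ell)$ parabolic, spectral gap, identification of $H_\chi$ with Schmidt's height via the Plücker embedding, and $\beta_\chi = n/(\ell(n-\ell))$) is the right checklist and essentially complete. One small slip: your explicit $Y$ has the wrong sign. With $\alpha_j(\diag(t_1,\dots,t_n)) = t_j - t_{j+1}$, the conditions $\alpha_\ell(Y) = -1$ and $\alpha_j(Y) = 0$ for $j\neq \ell$ force $Y = \diag\bigl(\underbrace{-(n-\ell)/n,\dots}_{\ell},\underbrace{\ell/n,\dots}_{n-\ell}\bigr)$; your $\diag(y,\dots,y,y-1,\dots,y-1)$ with $y=(n-\ell)/n$ gives $\alpha_\ell(Y) = +1$ instead. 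This is inconsequential for the corollary, since the only property of $Y$ you actually invoke is $Y\neq 0$, but it is worth correcting so that $a(y)=\exp(\log(y)Y)$ matches the paper's $\diag(y^{-(n-\ell)/n},\dots,y^{\ell/n},\dots)$.
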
 

Let us now go through the main steps of the argument. For simplicity, we assume that $c = 1$ and we write $\cN_{\beta_{\chi}}(x,T) = \cN_{1,\beta_{\chi}}(x,T)$. The first observation is that the quantity $\cN_{\beta_{\chi}}(x,T)$ can be understood as the Siegel transform of the indicator function of a certain subset $\cE_{\beta_{\chi}}(T) \subset \widetilde{X}$ evaluated at a certain point in $\Omega = G/\G$: we can associate to each $x \in X$ an element $k_{x} \in K$ such that 
\[
\cN_{\beta_{\chi}}(x,T) = \# (\cP_{\chi} \cap k_{x} \, \cE_{\beta_{\chi}}(T)) = S_{\chi} \mathbbm{1}_{\cE_{\beta_{\chi}}(T)} (k_{x}^{-1} \G).
\]
By Theorem \ref{thm:L1}, since $\bP = \bP_{\Delta \smallsetminus \{\alpha_{\ell}\}}$ is maximal, the group $L = \bL(\R)$ is unimodular, $\widetilde{X} = G / L$ admits a unique up to scaling Radon measure $\lambda_{\widetilde{X}}$ and the expected value of $S_{\chi} \mathbbm{1}_{\cE_{\beta}(T)}$, viewed as a random variable on $\Omega$, is given by
\[
\int_{\Omega} S_{\chi} \mathbbm{1}_{\cE_{\beta_{\chi}}(T)} \, \dd \mu_{\Omega} = \int_{\widetilde{X}} \mathbbm{1}_{\cE_{\beta_{\chi}}(T)} \, \dd \lambda_{\widetilde{X}} = \lambda_{\widetilde{X}} (\cE_{\beta_{\chi}}(T)). 
\]
The hope is that, for $\sigma_X$-almost every $x \in X$, the quantity $\cN_{\beta_{\chi}}(x,T)$ is asymptotically equal to the volume $\lambda_{\widetilde{X}} (\cE_{\beta_{\chi}}(T))$, as $T \rightarrow + \infty$, and this is what we will show. In fact, the main term on the right-hand side in \eqref{cor:eq_Grassmannian} is just the explicit value of the (main term of the) volume $\lambda_{\widetilde{X}} (\cE_{\beta_{\chi}}(T))$. In order to prove the desired asymptotic estimate, we will exploit the special geometry of the set $\cE_{\beta_{\chi}}(T)$. In fact, this set can be approximated by a set $\cE_{\beta_{\chi}}(T)^{+}$ that admits a simple decomposition under the action of the diagonal subgroup 
\[
\forall \, y \in \R_+^\times, \quad a(y) = \diag \big (\underbrace{y^{-(n-\ell)/n}, \dots, y^{-(n-\ell)/n}}_{\text{$\ell$ times}}, \underbrace{y^{\ell/n}, \dots, y^{\ell/n}}_{\text{$n-\ell$ times}} \big ).
\]
Indeed, there exists a subset $\cF \subset \widetilde{X}$ such that for all integers $N \geq 1$ 
\begin{equation} \label{eq:Proof_Sketch_Decomposition}
\cE_{\beta_{\chi}}(e^N)^+ = \bigsqcup_{i=0}^{N-1} a(e^{\beta_{\chi}})^{-i} \cF. 
\end{equation}

\begin{figure}[htbp]
\includegraphics[scale=0.6]{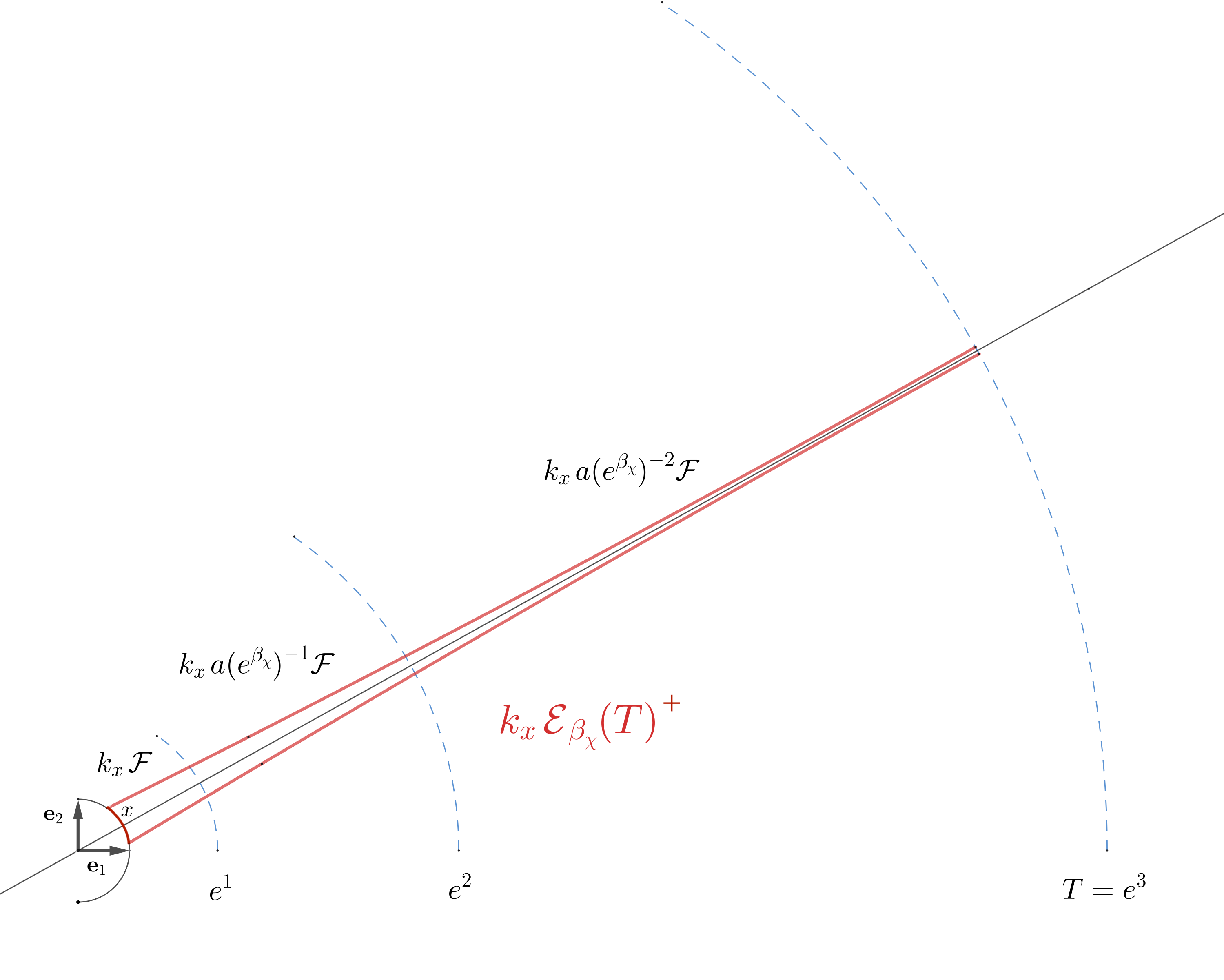}
\caption{The set $\cE_{\beta_{\chi}}(T)^+$ for the group $G = \SL_2(\R)$, the flag variety the real projective line $X = \mathbb{P}^1(\R) = \mathrm{Gr}_{1,2}(\R)$, the punctured affine cone $\widetilde{X} = \R^2 \setminus \{0\}$ above $X$, and the set $\cP_{\chi} = \mathcal{P}(\Z^2)$ of primitive elements of $\Z^2$. Rational approximations to a point $x \in X$ of height bounded by $T$ correspond to primitive lattice points of $\Z^2$ in the red region $k_x \cE_{\beta_{\chi}}(T)$, where $k_x \in \SO_2(\R)$ is a rotation such that $x = k_x [\bm{e}_1]$. The action of $a(y) = \diag ( y^{-1/2}, y^{1/2})$ with $y > 1$ on $\widetilde{X} = \R^2 \setminus \{0\}$ contracts the line through $\bm{e}_\chi = \bm{e}_1$ and expands the line through $\bm{e}_2$. The domain $\cE_{\beta_{\chi}}(T)^+$ can be decomposed into translates of the elementary domain $\cF$ under the action of $a(y)$. The hope is that for $x$ chosen randomly according to the Lebesgue measure on $X$ the number of primitive lattice points in the red region $k_x \cE_{\beta_{\chi}}(T)$, that is, the quantity $\# (\cP_{\chi} \cap k_x \, \cE_{\beta_{\chi}}(T))$, which is the classical primitive Siegel transform of the indicator function $\mathbbm{1}_{\cE_{\beta_{\chi}}(T)}$ evaluated at the rotated lattice $k_x^{-1} \Z^2$, is approximately given (up to a scalar) by the volume of $\cE_{\beta_{\chi}}(T)$.}
\label{fig:4}
\end{figure}

On the level of the Siegel transform this yields the sum decomposition
\[
S_{\chi} \mathbbm{1}_{\cE_{\beta_{\chi}}(T)^+}(k_x^{-1} \G) = \sum_{i=0}^{N-1} \mathbbm{1}_{\cF}(a(e^{\beta_{\chi}})^{i} k_x^{-1} \G).
\]
From now on, we simply view $S_{\chi} \mathbbm{1}_{\cE_{\beta_{\chi}}(T)^+}(k \G)$ as a random variable on the probability space $(K, \mu_K)$, where $\mu_K$ is the Haar probability measure of $K$. 
Up to dividing the right-hand side by $N$, it is a Birkhoff sum, but we will not take this viewpoint. Instead, we shall try to bound a quantity related to the variance of $S_{\chi} \mathbbm{1}_{\cE_{\beta_{\chi}}(T)^+}$ and then conclude by a Borel-Cantelli argument. More specifically, we shall bound a $(1+\varepsilon)$-moment, for some $\varepsilon > 0$, of the centered Siegel transform $S_{\chi} \mathbbm{1}_{\cE_{\beta_{\chi}}(T)^+} - \lambda_{\widetilde{X}}(\cE_{\beta_{\chi}}(T)^+)$, viewed as a random variable on $(K,\mu_K)$: if we can show that for some $\varepsilon > 0$ and all $N \geq 1$, 
\begin{equation} \label{eq:Proof_Sketch}
\int_K \left | S_{\chi} \mathbbm{1}_{\cE_{\beta_{\chi}}(e^N)^+}(k\G) - \lambda_{\widetilde{X}}(\cE_{\beta_{\chi}}(e^N)^+) \right |^{1+\varepsilon} \, \dd \mu_{K}(k) \, \ll \, N,
\end{equation}
then there exists $c > 0$ and $\nu(\varepsilon) \in (0,1)$ such that for $\mu_K$-almost every $k \in K$, 
\[
S_{\chi} \mathbbm{1}_{\cE_{\beta_{\chi}}(e^N)^+}(k\G) = c \, N \, \left ( 1 + O_x(N^{-\nu(\varepsilon)})\right ),
\]
as required. Due to integrability issues of the Siegel transform at this level of generality (see Theorems \ref{thm:L1} and \ref{thm:L2}), we are forced to work with $1 + \varepsilon$ for some small $\varepsilon > 0$ instead of $2$, which would represent the usual variance. 
Using the decomposition \eqref{eq:Proof_Sketch_Decomposition}, we express the argument in the integral of \eqref{eq:Proof_Sketch} as
\begin{equation*} 
S_{\chi} \mathbbm{1}_{\cE_{\beta_{\chi}}(e^N)^+}(k\G) - \lambda_{\widetilde{X}}(\cE_{\beta_{\chi}}(e^N)^+)
= \sum_{i=0}^{N-1} \bigg ( S_{\chi} \mathbbm{1}_{\cF} (a(e^{\beta_{\chi}}) k_x^{-1} \G) - \lambda_{\widetilde{X}}(\cF) \bigg ).
\end{equation*}
and obtain the bound in \eqref{eq:Proof_Sketch} using the effective single and double equidistribution property of expanding translates of $K$-orbits. In particular, we will need to work with smooth compactly supported functions that, on translated $K$-orbits, approximate the Siegel transform $S_{\chi} \mathbbm{1}_{\cF}$, which typically is neither smooth nor compactly supported. 

\subsection{Notation and conventions}
We use the Landau notation $O(\cdot)$ and the Vinogradov symbol $\ll$. Given $A, B > 0$, we use the notation $A \gg B$ for $B \ll A$, and $A \asymp B$ for $A \ll B \ll A$. We use subscripts to indicate the dependence of the constant on parameters. 
For simplicity of exposition, we will work with the set of complex points of an algebraic variety defined over $\Q$, and refer to it simply as the variety itself when no confusion arises. For instance, we write $G = \bG(\R)$ and $\bG = \bG(\C)$ to denote the groups of real and complex points of $\bG$, respectively. Given a discrete subgroup $\G \leq G$ and a closed subgroup $H \leq G$, we write $\G_H$ for $\G \cap H$. Discrete groups are always equipped with the counting measure. 

\vspace{5mm}
\textbf{Acknowledgments}. 
I am very grateful to Nicolas de Saxc\'e for introducing me to this topic, during my doctoral thesis under his supervision, and for sharing with me crucial insights that contributed to the proofs of Theorems \ref{thm:L2} and \ref{thm:Linfty}. I also thank Shucheng Yu for useful discussions, and Fr\'ed\'eric Paulin for numerous corrections and suggestions that led, in particular, to the removal of a restrictive hypothesis in Theorem \ref{thm:Critical} (namely, that $\beta_{\chi} \leq 1$). 

\section{Notation and preliminary results} \label{sec:prel}

Unless stated otherwise, we will always denote by $\bG$ a connected simply-connected almost $\Q$-simple $\Q$-group and by $\bP$ a proper parabolic $\Q$-subgroup of $\bG$. Let $\bP_0$ be a minimal parabolic $\Q$-subgroup of $\bG$ contained in $\bP$ and let $\bT$ be a maximal $\Q$-split $\Q$-torus of $\bG$ contained in $\bP_0$. Let $\Phi$, $\Delta$ and $\{\lambda_{\alpha}\}_{\alpha \in \Delta}$ be the set of roots of $\bG$ relative to $\bT$, with the ordering associated to $\bP_0$, the set of simple roots and the set of relative fundamental $\Q$-weights (see \cite[Section~12]{BT65}), respectively. We let $\G \subset \bG(\Q)$ be an arithmetic subgroup of $G$. We normalize the Haar measure $\mu_{G}$ on $G$ so that the induced $G$-invariant measure $\mu_{\Omega}$ on the quotient $\Omega = G / \G$ is a probability measure. 

\subsection{Structure of parabolic $\Q$-subgroups} \label{sec:Structure}
Let us record some facts concerning the structure of standard parabolic $\Q$-subgroups of $\bG$ (see \cite[Section~11.7]{Borel69}). 
Let $\bU_0$ be the unipotent radical of $\bP_0$. 
For each subset $\theta$ of $\Delta$, we define the $\Q$-subtorus $\bT_{\theta}$ of $\bT$ to be the connected component of the intersection of the kernels of the $\alpha \in \theta$ and the parabolic $\Q$-subgroup $\bP_{\theta}$, containing $\bP_0$, to be the product of the centralizer $\cZ(\bT_{\theta})$ of $\bT_{\theta}$ in $\bG$ and $\bU_0$. In fact, this group is a semi-direct product $\bP_{\theta} = \cZ(\bT_{\theta}) \bU_{\theta}$ of $\cZ(\bT_{\theta})$ and its unipotent radical $\bU_{\theta}$. 
Let $[\theta]$ be the set of $\Q$-roots that are linear combinations of elements of $\theta$. Let $\bQ$ be the largest connected $\Q$-anisotropic $\Q$-subgroup of $\cZ(\bT)$. There exist a connected semisimple $\Q$-subgroup $\bH_{\theta}$ of $\cZ(\bT_{\theta})$ and a connected $\Q$-subgroup $\bQ_{\theta}$ of $\bQ$ such that:
\begin{itemize}
\item the $\Q$-rank of $\bH_{\theta}$ is equal to the number of elements of $\theta$;
\item $\bS_{\theta} = (\bH_{\theta} \cap \bT)^\circ$ is a maximal $\Q$-split torus of $\bH_{\theta}$;
\item $[\theta]$ is the system of $\Q$-roots of $\bH_{\theta}$;
\item $\cZ(\bT_{\theta})$ is the almost direct product of $\bQ_{\theta}$, $\bH_{\theta}$, $\bT_{\theta}$.
\end{itemize}

Let $\bM_{\theta}$ be the identity component of the intersection of the kernels of the $\Q$-characters of $\cZ(\bT_{\theta})$. By \cite[Proposition~10.7, (b)]{Borel69}, we have $X^*(\bM_{\theta})_{\Q} = \{1\}$, the $\Q$-character group of $\bM_{\theta}$. Then $\bM_{\theta} = \bQ_{\theta} \bH_{\theta}$ and $\bP_{\theta}$ is the almost direct product:
\begin{equation} \label{eq:Langlands}
\bP_{\theta} = \bM_{\theta} \, \bT_{\theta} \, \bU_{\theta}.
\end{equation}

\subsection{Representations and height functions} \label{sec:Reps}
Assume that $\pi_{\chi} : \bG \rightarrow \GL(\bV_{\chi})$ is an irreducible rational representation which is generated by a rational line $\bD_{\chi}$ of highest $\Q$-weight $\chi \in X^*(\bT)$ such that $\bP = \mathrm{Stab}_{\bG} (\bD_{\chi})$. Such representations are referred to as strongly rational over $\Q$ and we refer the reader to \cite[Section~12]{BT65} for the details. We fix a highest weight vector $\bm{e}_{\chi} \in \bD_{\chi}(\Q)$ and denote by $x_0 = [\bm{e}_{\chi}] \in \mathbb{P}(V_{\chi})$ the corresponding point in projective space. In particular, the space of real points $X = \bX(\R)$ of the generalized flag variety $\bX = \bG / \bP$ can be identified with the orbital set $G \, [\bm{e}_{\chi}]$ via the map $\iota_{\chi} : g P \mapsto g x_0$. We define $\widetilde{X}$ to be the orbital set $\widetilde{X} = G \, \bm{e}_{\chi}$ in $V_{\chi}$. By abuse of notation, we shall refer to $\widetilde{X}$ as the cone over $X$. Fix a $\G$-stable lattice $\bV_{\chi}(\Z) \subset \bV_{\chi}(\Q)$ of $V_{\chi}$ and denote by $\cP_{\chi}$ the set of primitive elements of $\bV_{\chi}(\Z)$ that are contained in $\widetilde{X}$. Let $K$ be a maximal compact subgroup of $G$ whose Lie algebra is orthogonal that of $T = \bT(\R)$ (with respect to the Killing form on the Lie algebra $\kg$ of $G$). 
We equip $V_{\chi}$ with a Euclidean inner product $\langle \cdot , \cdot \rangle$ for which $\pi_{\chi}(g)$ is unitary (resp. self-adjoint) whenever $g \in K$ (resp. $g \in T$). We denote the induced norm by $\|\cdot \|$. We assume that $\| \bm{e}_{\chi} \| = 1$ and that $\bm{e}_{\chi} \in \cP_{\chi}$. 
First, we define a height function $H$ on $\mathbb{P}(\bV_\chi)(\Q)$ by $H([\bm{v}])=\|\bm{v}\|$, where $\bm{v}$ is a primitive vector in the lattice $\bV_{\chi}(\Z)$ representing $[\bm{v}]$. Then, using the embedding $\iota_\chi$, we obtain a height function $H_\chi$ on $\bX(\Q)$, which is given by
\[
\forall \, v \in \bX(\Q), \quad H_\chi(v) = H(\iota_\chi(v)).
\]

\subsection{Measures and coordinates} \label{sec:Measure}

In this subsection, we assume that the parabolic $\Q$-subgroup $\bP$ of $\bG$ is maximal. In particular, there exists a unique simple root $\alpha \in \Delta$ such that $\bP = \bP_{\Delta \smallsetminus \{\alpha\}}$ is the standard parabolic $\Q$-subgroup associated with the subset $\Delta \smallsetminus \{\alpha\}$ of $\Delta$. In this case, we shall denote the almost direct product decomposition in \eqref{eq:Langlands} simply by $\bP = \bM \, \bA \, \bU$, where $\bM = \bM_{\Delta \smallsetminus \{\alpha\}}$, $\bA = \bT_{\Delta \smallsetminus \{\alpha\}}$, and $\bU = \bU_{\Delta \smallsetminus \{\alpha\}}$. We note that $\bL = \mathrm{Stab}_{\bG} (\bm{e}_{\chi})$ satisfies $\bL^{\circ} = \bM \, \bU$ and hence $\bL^{\circ}$ does not admit any non-trivial $\Q$-characters. Hence, by a theorem of Borel and Harish-Chandra (see \cite[Theorem 9.4]{BHC62}), the group $L = \bL(\R)$ is unimodular, the discrete subgroup $\G_L = \G \cap L$ is a lattice in $L$, and the quotient $G/L$, that we identify with the cone $\widetilde{X}$ over $X$ via the orbital map $gL \mapsto g \bm{e}_{\chi}$, admits a unique (up to scaling) $G$-invariant Radon measure $\mu_{G/L} = \lambda_{\widetilde{X}}$. We let $\mu_L$ be the Haar measure on $L$, normalized so that the induced $L$-invariant measure on the quotient $L/ \G_L$ is a probability measure and we normalize $\mu_{G/L}$ so that (see \cite[Theorem~2.51]{Folland15}),
\begin{align} \label{eq:Folland-Unfolding}
\forall \, f \in C_c(G), \quad \int_G f(g) \, \dd \mu_G(g) &= \int_{G /L} \int_{L} f(g l) \, \dd \mu_{L}(l) \, \dd \mu_{G/L}(g L).
\end{align}

By a slight abuse of notation, we denote by $A$ the connected component with respect to the real topology of $\bA(\R)$. Let us parametrize $A$ as follows. Let $\kt$ be the Lie algebra of $T = \bT(\R)$. There exists a unique element $Y_{\alpha} \in \kt$ such that $\alpha(Y_{\alpha}) = - 1$ and $\beta(Y_{\alpha}) = 0$ for all other simple roots $\beta \in \Delta \smallsetminus \{\alpha\}$. We let 
\begin{equation} \label{eq:Parametrization}
\forall \, y \in \R_+^\times, \quad a(y) = \exp(\log(y) Y_{\alpha}).
\end{equation}
Then $A = \{a(y) : y \in \R_+^\times \}$. Let $d = \dim X$ be the dimension of $X$. Let $\mu_K$ be the Haar probability measure on $K$ and $d \mu_A(a(y)) = y^{-1} \dd y$ the push-forward to $A$ of the Haar measure on $\R_+^{\times}$ via the map $y \mapsto a(y)$. By \cite[Section~2.7]{Pfitscher24}, the group $G$ admits an almost direct product decomposition $G = K \, A \, L$ and there exists a normalizing constant $\omega_0 > 0$ such that the Haar measure $\mu_G$ of $G$ is given by 
\begin{equation} \label{eq:Measure_Decomposition_G}
\dd \mu_G = \omega_0 \, y^{-(d+1)} \, \dd \mu_K \, \dd y \, \dd \mu_L.
\end{equation}
Moreover, let $K_L = K \cap L$ and let $\sigma$ be the pushforward of the measure $\mu_K$ on $K$ to $K_L$ via the map $k \mapsto k \, K_L$. 
The map of $(K/K_L) \times A$ to $G / L = \widetilde{X}$ given by $(k \, K_L, a(y)) \mapsto k a(y) \bm{e}_{\chi}$ is a homeomorphism. In these coordinates, the measure $\mu_{G/L} = \lambda_{\widetilde{X}}$ is given by 
\begin{align}\label{eq:Measure-On_Tilde-X}
\dd \lambda_{\widetilde{X}}(k a(y) \bm{e}_\chi) 
= \omega_0 \, y^{-(d+1)} \, \dd \sigma(k) \, \dd y.
\end{align}

\subsection{Distance on $X$} \label{sec:Distance} 
For our application we also need to specify a probability measure $\sigma_X$ and a distance $d(\cdot, \cdot)$ on $X$. 
By the Iwasawa decomposition $G = KP$, the group $K$ acts transitively by left multiplication on $X$ and we let $\sigma_X$ be the unique $K$-invariant probability measure on $X$.  
Let $\mathbb{S} = \{\bm{x} \in V_\chi : \| \bm{x}\| = 1 \}$ be the unit sphere in $V_\chi$, viewed as a Riemannian submanifold of $V_\chi$. The $K$-equivariant projection map $\mathbb{S} \rightarrow \mathbb{P}(V_\chi)$, $\bm{v} \mapsto [\bm{v}]$, induces a $K$-invariant Riemannian metric on $\mathbb{P}(V_\chi)$, and by restriction also on $X$. The associated Riemannian measure equals $\vol_{\mathrm{R}}(X) \, \sigma_X$, where $\vol_{\mathrm{R}}(X)$ is the total Riemannian volume of $X$. We denote the induced distance on $X$ by $d(\cdot, \cdot)$. Let $\ku^{-}$ be the Lie algebra of the unipotent subgroup $U^-$ opposite to $P$. Let $\phi : U^- \rightarrow X$ be the map given by $\phi(u) = u x_0$. By \cite[Section~2.5]{Pfitscher24}, we have that $D_1 \phi : \ku^- \rightarrow T_{x_0} X$ is a linear isomorphism. We equip $\ku^-$ with a Euclidean structure for which this isomorphism is an isometry and we denote the implied norm on $\ku^-$ by $\|\cdot \|_{\ku^{-}}$. Then, for all $u \in \ku^-$, we have that
\begin{equation} \label{eq:Distance_Estimate}
d(x_0, \exp(u) x_0) = \|u\|_{\ku^{-}} + O\bigl (\|u\|_{\ku^{-}}^2 \bigr ),
\end{equation}
where $\exp : \ku^- \rightarrow U^-$ is the exponential map (see \cite[Lemma~2.1]{Pfitscher24}). 

\section{Integrability of Siegel transforms} \label{sec:GeneralSiegel}
 
Let us recall the definition of the Siegel transform $S_{\chi}$ (see Definition \ref{def:Siegel-Transform}): for every $f \in B_c^{\infty}(\widetilde{X})$, we defined $S_{\chi} f : \Omega \rightarrow \C$ by
\begin{equation} \label{def:Siegel-Transform-2}
\forall \, g \in G, \quad S_{\chi} f(g \G) = \sum_{\bm{v} \in \cP_{\chi}} f(g \bm{v}). 
\end{equation} 
Since $f$ is compactly supported, 
the sum on the right-hand side is finite, and hence converges absolutely. Also, since the subset $\cP_{\chi} \subset \widetilde{X}$ is $\G$-stable, the Siegel transform $S_\chi f$ is a well-defined function on $\Omega = G/\G$. 

\subsection{Some preliminary observations}
The action of $\G$ on the discrete set $\cP_{\chi}$ is not transitive in general. However, as we will now show, $\cP_{\chi}$ can always be expressed as a finite union of $\G$-orbits. 
By a theorem of Borel and Harish-Chandra \cite[Proposition~15.6]{Borel69}, the set of double cosets $\G \backslash \bG(\Q) / \bP(\Q)$ is finite. Moreover, according to \cite[Theorem~11.8]{Borel69}, we have $(\bG / \bP)(\Q) = \bG(\Q) / \bP(\Q)$. Thus, the set of rational points $(\bG / \bP)(\Q)$ of the generalized flag variety $\bG / \bP$ is a finite union of $\G$-orbits.  
The orbit map $\bG \rightarrow \mathbb{P}(\bV_{\chi})$ given by $g \mapsto g x_0$ induces an isomorphism $\bG/\bP \rightarrow \bG \, x_0$ defined over $\Q$. We identify $\bG/\bP$ with $\bG \, x_0$ via this isomorphism. Next, we note that there is a one-to-one correspondence between $(\bG/\bP)(\Q)$ and lines passing through elements of $\cP_{\chi}$. Hence there exist finitely many representatives $\bm{v}_1, \dots, \bm{v}_m \in \cP_{\chi}$, with $\bm{v}_1 = \bm{e}_{\chi}$, such that
\begin{equation} \label{eq:GammaOrbits}
\cP_{\chi} = \bigsqcup_{i=1}^m \G \, \bm{v}_i.
\end{equation}
Let us define, for every $f \in B_c^{\infty}(\widetilde{X})$, arithmetic subgroup $\G' \subset \bG(\Q)$ of $G$ and $\bm{v} \in \widetilde{X} \cap \bV_{\chi}(\Q)$, the \emph{incomplete Eisenstein series} $ E_{\chi, \G',\bm{v}} f : G/\G' \rightarrow \C$ by
\[
\forall \, g \in G, \quad  E_{\chi, \G',\bm{v}} f(g\G') = \sum_{\gamma \in \G' / (\G' \cap L_{\bm{v}}) } f(g \gamma \bm{v}),
\]
where $L_{\bm{v}} = \mathrm{Stab}_G (\bm{v})$. Letting $g_{\bm{v}} \in \bG(\Q)$ be such that $[\bm{v}] = g_{\bm{v}} [\bm{e}_{\chi}]$, we have $L_{\bm{v}} = g_{\bm{v}} L g_{\bm{v}}^{-1}$. For simplicity, we write $E_{\chi} = E_{\chi, \G, \bm{e}_{\chi}}$. Hence, by \eqref{eq:GammaOrbits}, we have
\begin{equation} \label{eq:Sum_Eisenstein}
\forall \, g \in G, \quad S_{\chi} f(g \G) = \sum_{i=1}^m E_{\chi, \G,\bm{v}_i} f(g\G)
\end{equation}
The next lemma shows that the property that $S_{\chi}$ maps $B_{c}^{\infty}(\widetilde{X})$ into $L^p(\Omega)$ is independent of the choices of the arithmetic subgroup $\G$ and the $\G$-stable lattice $\bV_{\chi}(\Z)$.
\begin{lemma} \label{lem:Eisenstein_Single_Orbit}
Let $p \in [1,+\infty]$. The following assertions hold.
\begin{enumerate}
\item The fact that $E_{\chi, \G', \bm{v}}$ maps $B_{c}^{\infty}(\widetilde{X})$ into $L^p(G/\G')$ is independent of the choice of the arithmetic subgroup $\G'$ and the rational element $\bm{v} \in \widetilde{X} \cap \bV_{\chi}(\Q)$. 
\item The Siegel transform $S_{\chi}$ maps $B_{c}^{\infty}(\widetilde{X})$ into $L^p(\Omega)$ if and only if $E_{\chi}$ does so. 
\end{enumerate}
\end{lemma}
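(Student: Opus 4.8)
The plan is to prove both assertions by reducing questions about different arithmetic groups and lattices to a single "reference" situation, exploiting commensurability. I would first observe that any two arithmetic subgroups of $G$ are commensurable, and any two $\G$-stable lattices in $\bV_\chi(\Q)$ are commensurable as well, so it suffices to handle the following two elementary operations: (a) passing to a finite-index subgroup $\G'' \leq \G'$, and (b) replacing $\bm{v}$ by a rational multiple or by another representative in the same $\G'$-orbit (changing $\G'$-orbit is harmless since we only need the statement for \emph{one} $\bm{v}$, and changing the lattice $\bV_\chi(\Z)$ only rescales and permutes finitely many $\G'$-orbits of primitive vectors, each of which is an incomplete Eisenstein series of the type considered). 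Throughout, recall from \eqref{eq:Sum_Eisenstein} that $S_\chi f$ is a finite sum of incomplete Eisenstein series $E_{\chi,\G,\bm{v}_i}f$, so part~(2) will follow once part~(1) is established together with the remark that finite sums preserve and reflect $L^p$-membership (for reflection one uses positivity: if $f \geq 0$ then each summand is dominated by $S_\chi f$).

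For the finite-index comparison in (a), suppose $\G'' \leq \G'$ has finite index. There is a natural covering $\pi: G/\G'' \to G/\G'$ of degree $[\G':\G'']$, and $L^p(G/\G')$ pulls back isometrically (up to the constant $[\G':\G'']^{1/p}$) into $L^p(G/\G'')$ via the $G$-invariant probability measures. If $E_{\chi,\G',\bm{v}}f \in L^p(G/\G')$, then since $E_{\chi,\G'',\bm{v}}f$ is obtained by summing over the larger set $\G''/(\G''\cap L_{\bm{v}})$, one has the pointwise bound
\[
E_{\chi,\G'',\bm{v}}f(g\G'') \;\leq\; E_{\chi,\G',|f|\,}\!(g\G')
\]
whenever $f \geq 0$ (the left sum is a sub-sum of the right one, after noting $\G''\cap L_{\bm{v}} \leq \G'\cap L_{\bm{v}}$ gives a surjection $\G''/(\G''\cap L_{\bm{v}}) \to$ a subset of $\G'/(\G'\cap L_{\bm{v}})$), so $L^p$-membership descends. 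Conversely, averaging over a set of coset representatives $\G' = \bigsqcup_j \G'' \gamma_j$ and using that $|f|$ dominates each translate, one expresses $E_{\chi,\G',|f|}$ as a finite sum of right-$\G'$-translates of functions controlled by $E_{\chi,\G'',|f'|}$ for suitable $f'$ in the same class $B_c^\infty(\widetilde X)$; since right translation by a fixed group element is an $L^p$-isometry of $G/\G''$ (it is measure-preserving for $\mu_{G/\G''}$), $L^p$-membership ascends. Reducing the general statement of (1) to the case $f \geq 0$ is immediate by writing any $f \in B_c^\infty(\widetilde X)$ as a complex combination of non-negative functions in $B_c^\infty(\widetilde X)$ and using $|S_\chi f| \leq S_\chi |f|$.

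For (b), changing $\bm{v}$ within its $\G'$-orbit replaces $L_{\bm{v}}$ by a conjugate and $E_{\chi,\G',\bm{v}'}f$ by $E_{\chi,\G',\bm{v}}(f')$ with $f'(x) = f(\gamma x)$ for the relevant $\gamma \in \G'$, which again lies in $B_c^\infty(\widetilde X)$; scaling $\bm{v}$ by a rational scalar likewise only precomposes $f$ with a fixed homothety of $\widetilde X$, preserving $B_c^\infty(\widetilde X)$. Finally, to pass from an arbitrary $\bm{v} \in \widetilde X \cap \bV_\chi(\Q)$ to $\bm{e}_\chi$: writing $[\bm{v}] = g_{\bm{v}}[\bm{e}_\chi]$ with $g_{\bm{v}} \in \bG(\Q)$, the group $g_{\bm{v}}^{-1}\G' g_{\bm{v}}$ is again arithmetic, so by commensurability and case~(a) we may replace $\G'$ by a common finite-index subgroup and conjugate; the homeomorphism $G/\G' \to G/(g_{\bm{v}}^{-1}\G'g_{\bm{v}})$ given by $g\G' \mapsto g_{\bm{v}}^{-1}g\, g_{\bm{v}}\,(g_{\bm{v}}^{-1}\G'g_{\bm{v}})$ intertwines $E_{\chi,\G',\bm{v}}$ with $E_{\chi,g_{\bm{v}}^{-1}\G'g_{\bm{v}},\bm{e}_\chi}$ after precomposing $f$ with $x \mapsto g_{\bm{v}} x$, which maps $\widetilde X$ to itself and preserves $B_c^\infty(\widetilde X)$; this homeomorphism carries $\mu_\Omega$ to the corresponding invariant probability measure, hence is an $L^p$-isometry. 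Assembling these reductions proves~(1), and then~(2) follows from \eqref{eq:Sum_Eisenstein}.

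The main obstacle I anticipate is organizational rather than deep: keeping the finitely many bookkeeping steps — the $\G$-orbit decomposition \eqref{eq:GammaOrbits} of $\cP_\chi$ under a change of lattice, the passage between $\G'\cap L_{\bm{v}}$ and $\G''\cap L_{\bm{v}}$ under a finite-index change, and the conjugation by rational elements $g_{\bm{v}}$ — all compatible, and in particular verifying that each operation genuinely stays within the class $B_c^\infty(\widetilde X)$ and within the category of arithmetic subgroups. The only analytic point requiring a little care is the "ascending" direction in~(a): one must check that summing an incomplete Eisenstein series over a finite-index \emph{super}group is controlled, and here the key is that $\G'\cap L_{\bm{v}}$ may fail to contain $\G''\cap L_{\bm{v}}$ with finite index of the "right" size, so one should argue at the level of the full sums over $\cP_\chi$-type orbits (using positivity and the identity $S_\chi|f|(g\G'') = \sum_i E_{\chi,\G'',\bm{v}_i}|f|(g\G'')$ together with the covering map) rather than orbit-by-orbit.
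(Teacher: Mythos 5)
Your overall strategy is the same as the paper's: reduce to the case $\bm{v}=\bm{e}_{\chi}$ by the substitution $g\mapsto gg_{\bm{v}}^{-1}$ and a rescaling of $f$, then compare two arithmetic subgroups via their common finite-index intersection $\G_1\cap\G_2$, using positivity and the injection of $(\G_1\cap\G_2)/(\G_1\cap\G_2\cap L)$ into $\G_1/(\G_1\cap L)$. The conjugation step, the ``descending'' finite-index step, and the deduction of part (2) from part (1) plus \eqref{eq:Sum_Eisenstein} are all correct (modulo the typo ``larger set,'' which the parenthetical injection fixes).

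There is a genuine gap in the ``ascending'' direction. After writing $\G'=\bigsqcup_j\G''\gamma_j$ you express $E_{\chi,\G',\bm{v}}f$ as a sum of the functions $g\mapsto E_{\chi,\G'',\bm{v}}f(g\gamma_j\G'')$ and invoke that ``right translation by a fixed group element is an $L^p$-isometry of $G/\G''$.'' But right translation by $\gamma_j$ does \emph{not} define a map $G/\G''\to G/\G''$ unless $\gamma_j$ normalizes $\G''$: the function $g\mapsto E_{\chi,\G'',\bm{v}}f(g\gamma_j\G'')$ is right $(\gamma_j\G''\gamma_j^{-1})$-invariant, not right $\G''$-invariant, so it lives on a different quotient and ``$\mu_{G/\G''}$-measure-preserving'' is not even a meaningful assertion. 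The correct fix (and the one the paper uses) is to drop to the level of $\mu_G$-integrals over fundamental domains: take $\cF_2$ a fundamental domain for $\G_2$, set $\cF_{1,2}=\bigsqcup_{\gamma_1}\cF_2\gamma_1$ a fundamental domain for $\G_1\cap\G_2$, and use right-invariance of the Haar measure $\mu_G$ (not of $\mu_{\Omega}$) together with right $\G_1$-invariance of $g\mapsto\sum_{\gamma\in\G_1/(\G_1\cap L)}f(g\gamma\bm{e}_\chi)$ to move the integral from $\cF_{1,2}$ to $\cF_1$, picking up only bounded multiplicative constants. Alternatively one could first replace $\G''$ by its normal core in $\G'$, but your text does neither. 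Your closing paragraph gestures at a subtlety in this direction, but the concern you name — that $[\G'\cap L_{\bm{v}}:\G''\cap L_{\bm{v}}]$ might not have the ``right size'' — is not the issue (that index is always finite and only contributes a harmless constant); the actual obstruction is the non-normality of $\G''$ in $\G'$, which forces the fundamental-domain formulation.
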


Given an arithmetic subgroup $\G'$ of $G$, we identify $G/\G'$ with a fundamental domain $\cF'$ in $G$ and the $G$-invariant Haar measure on $G/\G'$ with the restriction of the Haar measure $\mu_G$ on $G$ to $\cF'$. 

\begin{proof}
The second claim follows from the first by noting that $S_{\chi}$ is a finite sum of incomplete Eisenstein series \eqref{eq:Sum_Eisenstein}. As we will show, the first claim is a consequence of the fact that for a fixed $\Q$-structure on $\bG$, any two arithmetic subgroups ${\G_1, \G_2 \subset \bG(\Q)}$ of $G$ are commensurable, that is, their intersection $\G_1 \cap \G_2$ has finite index in both $\G_1$ and $\G_2$. 

Let us first reduce \emph{(1)} to the case where $\bm{v} = \bm{e}_{\chi}$. For a function $f \in B_c^{\infty}(\widetilde{X})$ and a scalar $\lambda > 0$, let us write $f_{\lambda}(\cdot) = f(\lambda \cdot)$.  Then, for every $\bm{v} \in \widetilde{X} \cap \bV_{\chi}(\Q)$ (with $\lambda_{\bm{v}} > 0$ and $g_{\bm{v}} \in \bG(\Q)$ such that $\bm{v} = \lambda_{\bm{v}} g_{\bm{v}} \bm{e}_{\chi}$) and arithmetic subgroup $\Gamma'$, we have
\begin{align*}
E_{\chi, \G',\bm{v}} f(g\G) = \sum_{\gamma \in \G' / (\G' \cap L_{\bm{v}}) } f(g \gamma \lambda_{\bm{v}} g_{\bm{v}} \bm{e}_{\chi})
= \sum_{\gamma \in g_{\bm{v}}\G'g_{\bm{v}}^{-1} / (g_{\bm{v}}\G'g_{\bm{v}}^{-1} \cap L) } f_{\lambda_{\bm{v}}}(g g_{\bm{v}} \gamma \bm{e}_{\chi}).
\end{align*}
Using the substitution $g \mapsto g g_{\bm{v}}^{-1}$, we have, for $p < + \infty$, 
\begin{align*}
\int_{G/\G'} \left | E_{\chi, \G', \bm{v}} f \right |^p \, \dd \mu_{G} &= \int_{G/g_{\bm{v}} \G' g_{\bm{v}}^{-1}}  \Big | \sum_{\gamma \in g_{\bm{v}} \G' g_{\bm{v}}^{-1} / (g_{\bm{v}} \G' g_{\bm{v}}^{-1} \cap L)} f_{\lambda_{\bm{v}}} (g \gamma \bm{e}_{\chi}) \Big |^p \, \dd \mu_G(g) \\
&= \int_{G/g_{\bm{v}} \G' g_{\bm{v}}^{-1}} \left | E_{\chi, g_{\bm{v}} \G' g_{\bm{v}}^{-1}, \bm{e}_{\chi}} f_{\lambda_{\bm{v}}} \right |^p \, \dd \mu_{G}.
\end{align*}
Suppose now that $p = + \infty$. We may assume that $f \in C_c(\widetilde{X})$. Then $E_{\chi, \G', \bm{v}} f$ is a continuous function and 
\[
\| E_{\chi, \G_1, \bm{v}_1} f \|_{\infty} 
= \sup_{g \in G} \left | \sum_{\gamma \in g_{\bm{v}}\G'g_{\bm{v}}^{-1} / (g_{\bm{v}}\G'g_{\bm{v}}^{-1} \cap L) } f_{\lambda_{\bm{v}}}(g \gamma \bm{e}_{\chi}) \right | =\| E_{\chi, g_{\bm{v}} \G' g_{\bm{v}}^{-1}, \bm{e}_{\chi}} f_{\lambda_{\bm{v}}} \|_{\infty}.
\]
Therefore, by symmetry, it suffices to show that if $E_{\chi, \G_1} := E_{\chi, \G_1, \bm{e}_{\chi}}$ maps $B_c^{\infty}(\widetilde{X})$ into $L^p(G/\G_1)$, then $E_{\chi, \G_2} := E_{\chi, \G_2, \bm{e}_{\chi}}$ maps $B_c^{\infty}(\widetilde{X})$ into $L^p(G/\G_2)$. We may assume without loss of generality that $f \geq 0$, and we first treat the case $p < + \infty$. Observe that, for every $g \in G$, we have
\begin{align*}
\left | E_{\chi, \G_2} f(g \G_2) \right |^p &\leq \left | \sum_{\gamma \in \G_2 / (\G_1 \cap \G_2 \cap L)} f(g \gamma \bm{e}_{\chi}) \right |^p \\
&\leq \max_{\gamma_1 \in \G_2 / \G_1 \cap \G_2} \left | \# \big ( \G_2 / (\G_1 \cap \G_2) \big ) \sum_{\gamma_2 \in (\G_1 \cap \G_2) / (\G_1 \cap \G_2 \cap L)} f(g \gamma_1 \gamma_2 \bm{e}_{\chi}) \right |^p. 
\end{align*}
Let $\cF_2$ be a fundamental domain for $\G_2$ in $G$ and let $\cF_{1,2}$ be the fundamental domain for $\G_1 \cap \G_2$ in $G$ given by $\cF_{1,2} = \bigsqcup_{\gamma_1 \in \G_2 / (\G_1 \cap \G_2)} \cF_2 \gamma_1$. Then, we have
\begin{align*}
\int_{\cF_2} \left | E_{\chi, \G_2} f \right |^p \, \dd \mu_G &\lesssim_{\G_1, \G_2} \sum_{\gamma_1 \in \G_2 / (\G_1 \cap \G_2)} \int_{\cF_2 \gamma_1} \left | \sum_{\gamma \in (\G_1 \cap \G_2) / (\G_1 \cap \G_2 \cap L)} f(g \gamma \bm{e}_{\chi}) \right |^p \, \dd \mu_G(g) \\
&= \int_{\cF_{1,2}} \left | \sum_{\gamma \in (\G_1 \cap \G_2) / (\G_1 \cap \G_2 \cap L)} f(g \gamma \bm{e}_{\chi}) \right |^p \, \dd \mu_G(g).
\end{align*}
Now, since $(\G_1 \cap \G_2) / (\G_1 \cap \G_2 \cap L)$ injects into $\G_1 / (\G_1 \cap L)$,
$$
\left | \sum_{\gamma \in (\G_1 \cap \G_2) / (\G_1 \cap \G_2 \cap L)} f(g \gamma \bm{e}_{\chi}) \right |^p \leq \left | \sum_{\gamma \in \G_1 / (\G_1 \cap L)} f(g \gamma \bm{e}_{\chi}) \right |^p.
$$
Let $\cF_1$ be a fundamental domain for $\G_1$ in $G$ such that $\cF_{1,2} = \bigsqcup_{\gamma_1 \in \G_1 / (\G_1 \cap \G_2)} \cF_1 \gamma_1$. Then, using the fact that the function sending $g$ to $\sum_{\gamma \in \G_1 / \G_1 \cap L} f(g \gamma L)$ is right $\G_1$-invariant, we have
$$
\int_{\cF_{1,2}} \left | \sum_{\gamma \in (\G_1 \cap \G_2) / (\G_1 \cap \G_2 \cap L)} f(g \gamma \bm{e}_{\chi}) \right |^p \, \dd \mu_G(g) \lesssim_{\G_1, \G_2} \int_{\cF_{1}} \left | \sum_{\gamma \in \G_1 / (\G_1 \cap L)} f(g \gamma \bm{e}_{\chi})  \right |^p \, \dd \mu_G(g)
$$
and the latter converges, by assumption. This concludes the proof of the case $p < + \infty$. Now suppose that $p = + \infty$. It follows from the above that
\begin{align*}
\left | E_{\chi, \G_2} f(g\G_2) \right | &\leq \max_{\gamma_1 \in \G_2 / (\G_1 \cap \G_2)}  \# \big ( \G_2 / (\G_1 \cap \G_2) \big ) \left | \sum_{\gamma_2 \in (\G_1 \cap \G_2) / (\G_1 \cap \G_2 \cap L)} f(g \gamma_1 \gamma_2 L) \right | \\
&\leq \max_{\gamma_1 \in \G_2 / (\G_1 \cap \G_2)}  \# \big ( \G_2 / (\G_1 \cap \G_2) \big ) \left | \sum_{\gamma_2 \in \G_1 / (\G_1 \cap L)} f(g \gamma_1 \gamma_2 L) \right |,
\end{align*}
where for the second inequality we used that $(\G_1 \cap \G_2) / (\G_1 \cap \G_2 \cap L)$ injects into $\G_1 / (\G_1 \cap L)$. Taking upper bounds yields the claim. The proof is complete.
\end{proof} 

\subsection{$L^1$-integrability}
In this section, we prove Theorem \ref{thm:L1}. We define 
\[
\forall \, g \in G, \quad \lambda_\chi(g\G) = \min_{\bm{v} \in \bV_{\chi}(\Z) \smallsetminus \{\bm{0}\}} \| g \bm{v} \|
\]
to be the length of the shortest non-zero vector of $g \bV_{\chi}(\Z)$. For every $\Q$-weight $\mu \in X^*(\bT)$ of the representation $\pi_{\chi}$ let 
\[
\bV^{\mu} = \{\bm{v} \in \bV_{\chi} : \forall \, t \in \bT, \, \pi_{\chi}(t) \bm{v} = \mu(t) \bm{v} \}.
\]
This is a $\Q$-subspace of $\bV_{\chi}$. It is known that $\bV^{\chi}$ is one-dimensional, that $\bV_{\chi}$ is the direct sum of the linear subspaces $\bV^{\mu}$,
\begin{equation} \label{eq:Direct-Sum}
\bV_{\chi} = \bigoplus_{\mu} \bV^{\mu},
\end{equation}
and that every $\Q$-weight of $\pi_{\chi}$ has the form
\begin{equation} \label{eq:Q-weights}
\mu = \chi - \sum_{\alpha \in \Delta} c_{\alpha}(\mu) \alpha \quad \text{ with $c_{\alpha}(\mu) \in \N$.}
\end{equation}
We may assume that $\bV_{\chi}(\Z)$ is the $\Z$-span of an orthonormal basis consisting of weight vectors for the action of $T = \bT(\R)$. Moreover, by Lemma \ref{lem:Eisenstein_Single_Orbit}, we may assume that $\G$ is given by the stabilizer in $G$ of the lattice $\bV_{\chi}(\Z)$. In the following lemma, we bound the Siegel transform of a function $f \in B_{c}^{\infty}(\widetilde{X})$ in terms of $\lambda_\chi$.

\begin{lemma} \label{lem:Upper-Bound-Siegel}
Suppose that the parabolic $\Q$-subgroup $\bP$ is maximal. Then, for every $f \in B_{c}^{\infty}(\widetilde{X})$, we have
\begin{equation} \label{eq:Replace_Schmidt}
\forall \, g \in G, \quad | S_{\chi} f(g\G)| \ll_{\supp(f)} \|f\|_{\infty} \, \lambda_\chi(g\G)^{-\beta_{\chi} d }.
\end{equation}
\end{lemma}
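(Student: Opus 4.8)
The plan is to reduce the bound to a lattice-point count on the cone $\widetilde{X}$ and then to carry out that count. Since $f$ is bounded with $\supp f$ compact in $\widetilde{X}$, and hence bounded in $V_\chi$, there is a constant $R = R(\supp f) > 0$ with $\supp f \subseteq \set{\bm{w} \in \widetilde{X} : \norm{\bm{w}} \le R}$. Therefore
\[
\abs{S_\chi f(g\G)} = \Bigl| \sum_{\bm{v} \in \cP_\chi} f(g\bm{v}) \Bigr| \le \norm{f}_\infty \cdot \#\set{\bm{v} \in \cP_\chi : \norm{g\bm{v}} \le R},
\]
so it suffices to show $\#\set{\bm{v} \in \cP_\chi : \norm{g\bm{v}} \le R} \ll_R \lambda_\chi(g\G)^{-\beta_\chi d}$. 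I then make two easy reductions. First, since $\cP_\chi \subseteq \bV_\chi(\Z) \smallsetminus \{\bm{0}\}$, setting $m_g := \min_{\bm{v} \in \cP_\chi} \norm{g\bm{v}}$ (finite, as $\bm{e}_\chi \in \cP_\chi$) we have $\lambda_\chi(g\G) \le m_g$, so it is enough to obtain the bound with $m_g$ in place of $\lambda_\chi(g\G)$; second, if $m_g > R$ the set is empty, so I may assume $m_g \le R$.

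It remains to bound, uniformly in $g$, the number of $\bm{v} \in \cP_\chi$ with $\norm{g\bm{v}} \le R$ by $\ll_R m_g^{-\beta_\chi d}$. As $\bG$ is semisimple, $\pi_\chi(G) \subseteq \SL(V_\chi)$, so $\Lambda := \pi_\chi(g)\bV_\chi(\Z)$ is a lattice of fixed covolume; using that $\widetilde{X} = G\bm{e}_\chi$ is $G$-invariant, $g\cP_\chi \subseteq \Lambda \cap \widetilde{X}$, so the quantity to be bounded is the number of integral points of the twisted lattice $\Lambda$ lying on $\widetilde{X}$ of norm $\le R$ — equivalently, the number of rational points of $\bX$ of $\Lambda$-height $\le R$. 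The crude count of all of $\Lambda$ in a ball of radius $R$ only yields the exponent $\dim V_\chi$, which in general exceeds $\beta_\chi d$; one must exploit that $\widetilde{X}$ has dimension only $d+1$. Concretely, by the coordinates of Section~\ref{sec:Measure} every $\bm{v} \in \widetilde{X}$ has the form $\bm{v} = \norm{\bm{v}}\, \pi_\chi(k_{\bm{v}}) \bm{e}_\chi$ for some $k_{\bm{v}} \in K$ (determined by $[\bm{v}]$ modulo $K_L$), whence $\norm{g\bm{v}} = \norm{\bm{v}} \cdot \norm{\pi_\chi(g k_{\bm{v}}) \bm{e}_\chi}$; since $\norm{\pi_\chi(a(y))\bm{e}_\chi} = y^{\chi(Y_\alpha)}$ with $\chi(Y_\alpha) = -1/\beta_\chi < 0$ (one checks this directly from the definitions of $Y_\alpha$ and $\chi$), writing $g k_{\bm{v}} \in K\, a(y_{\bm{v}})\, L$ turns $\norm{g\bm{v}} \le R$ into the shrinking-target condition $y_{\bm{v}} \ge (\norm{\bm{v}}/R)^{\beta_\chi} = (H_\chi([\bm{v}])/R)^{\beta_\chi}$ on the point $[\bm{v}] \in X$. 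The number of rational points satisfying such a condition I would control by decomposing $\Lambda$ according to its successive minima and bounding the contribution of each resulting region by the $\lambda_{\widetilde{X}}$-volume of the corresponding slab of $\widetilde{X}$ via \eqref{eq:Measure-On_Tilde-X}; the degree-$(d+1)$ homogeneity in $y$ of that measure is exactly what produces the exponent $d/(-\chi(Y_\alpha)) = \beta_\chi d$. In the Grassmannian model $\widetilde{X}$ is the set of decomposable vectors, $[\bm{v}]$ is a rational $\ell$-plane, $H_\chi([\bm{v}])$ the covolume of the associated primitive sublattice of $g\,\Z^n$, and this becomes the classical count of primitive $\ell$-dimensional sublattices of bounded covolume, controlled by $\lambda_1$ as in Schmidt's work.

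The main obstacle is precisely this last counting estimate — obtaining the exponent $\beta_\chi d$ uniformly over all lattices $\Lambda$. The delicate point is that $\norm{\bm{v}}$ and $\norm{\pi_\chi(g k_{\bm{v}})\bm{e}_\chi}$ are coupled, so a rational point of large height is not ruled out outright but only constrained by how strongly $g$ contracts $V_\chi$ in the direction of $[\bm{v}]$; the Diophantine exponent $\beta_\chi$ is exactly the value making this trade-off critical, and handling it seems to require either a careful induction on the relative rank or an appeal to Schmidt-type subspace counting combined with the highest-weight-orbit structure of $\widetilde{X}$. The two reductions in the first paragraph are routine.
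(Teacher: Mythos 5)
Your opening reductions are sound and match the paper's: bounding $|S_\chi f(g\G)|$ by $\|f\|_\infty$ times $\#\{\bm{v}\in\cP_\chi : \|g\bm{v}\|\le R\}$, and noting $\lambda_\chi(g\G)\le m_g$ so that the stronger bound in terms of $m_g$ suffices. But the estimate you then need — $\#\{\bm{v}\in\cP_\chi : \|g\bm{v}\|\le R\}\ll_R \lambda_\chi(g\G)^{-\beta_\chi d}$ — is exactly where your argument stops, and you acknowledge this yourself (``the main obstacle is precisely this last counting estimate''). The successive-minima/slab-volume sketch is a plausible heuristic but is not carried out, and getting the exponent $\beta_\chi d$ uniformly over the moving lattice $g\bV_\chi(\Z)$ is precisely the nontrivial content of the lemma. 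As written, this is a genuine gap, not an elementary detail.

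The paper finishes the argument by two ideas you do not use. First, reduction theory: writing $g=kanc\gamma$ relative to a Siegel set $\mathfrak{S}=KA_\tau\bm{\omega}$ and finite set $C\subset\bG(\Q)$, one reduces (up to bounded distortion, using compactness of $K$ and of $\bigcup_{a\in A_\tau} a\bm{\omega}a^{-1}$, finiteness of $C$, and right $\G$-invariance) to the case $g=a^-$ with $a^-$ in the negative Weyl chamber. Second, the uniform expansion bound there: since by \eqref{eq:Q-weights} every $\Q$-weight $\mu$ of $\pi_\chi$ satisfies $\chi(a^-)\le\mu(a^-)$, and $\bV_\chi(\Z)$ is spanned by an orthonormal weight basis, one gets $\lambda_\chi(a^-)=\chi(a^-)$ and hence $\|a^-\bm{w}\|\ge\lambda_\chi(a^-)\|\bm{w}\|$ for all $\bm{w}\in V_\chi$. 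This turns the count against the twisted lattice into $\#\{\bm{v}\in\cP_\chi:\|\bm{v}\|<C_0\lambda_\chi(a^-)^{-1}r\}$ against the \emph{fixed} lattice, which is then handled directly by the asymptotic $\#\{\bm{v}\in\cP_\chi:\|\bm{v}\|<T\}\asymp T^{\beta_\chi d}$ from \eqref{eq:Number_Primitive_Bounded_Height} (an input from \cite{Pfitscher24}). In other words, the paper avoids any per-lattice geometry-of-numbers argument: the Siegel-set reduction plus the highest-weight inequality convert the uniform-in-$g$ count into a single fixed count, which is exactly the step your proposal leaves open.
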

Although this bound is not optimal, it is sufficient for our purposes. 

\begin{proof}
We shall need the following consequence of the proof of \cite[Theorem~C]{Pfitscher24}. For every $T \geq 1$, consider the function 
\[
\cN(T) = \# \left \{v \in \bX(\Q) : H_{\chi}(v) < T \right \}
\]
counting rational points in $X$ of height $< T$. Let $\beta_{\chi} \in \Q_{>0}$ be the Diophantine exponent of $X$ with respect to $\chi$ (see \cite[D\'efinition~2.4.1 et Th\'eor\`eme~2.4.5]{deSaxce20}) and let $d = \dim X$ be the dimension of $X$. Then, as $T \rightarrow + \infty$, we have $\cN(T) \sim T^{\beta_{\chi} d}$. Since there is a one-to-one correspondence between points in $\bX(\Q)$ and lines passing through $\cP_{\chi}$, by the definition of the height function $H_{\chi}$, we also have that, as $T \rightarrow + \infty$
\begin{equation} \label{eq:Number_Primitive_Bounded_Height}
\# \left \{\bm{v} \in \cP_{\chi} : \|\bm{v}\| < T \right \} \asymp T^{\beta_{\chi} d}.
\end{equation}
Fix $f \in B_{c}^{\infty}(\widetilde{X})$ and pick $r = r(\supp(f)) \geq 1$ such that $\supp(f)$ is contained in $B_{\widetilde{X}}(r) = \{\bm{v} \in \widetilde{X} : \|\bm{v}\| < r \}$. The proof now proceeds using reduction theory as presented, for instance, in \cite[Section~12, Theorem~13.1]{Borel69}. By a slight abuse of notation, we let $\mathfrak{a}$ be the Lie algebra of $T^{\circ}$ and, for every $\tau \geq 0$, let $\mathfrak{a}_{\tau} = \{Y \in \mathfrak{a} : \forall \, \beta \in \Delta, \, \beta(Y) \leq \tau \}$. We set $A_{\tau} = \exp \, \mathfrak{a}_{\tau}$ and an note that $\mathfrak{a}^- = \mathfrak{a}_0$ is the negative Weyl chamber of $\mathfrak{a}$ with respect to $\Delta$. Let $\bM_0$ be the largest $\Q$-anisotropic $\Q$-subgroup of the centralizer $\cZ_{\bG}(\bT)^{\circ}$ in $\bG$ of $\bT$ and let $\bU_0$ be the unipotent radical of the minimal parabolic $\Q$-subgroup $\bP_0$. There exist $\tau > 0$, a compact subset $\bm{\omega}$ of $M_0 U_0$, and a finite subset $C \subset \bG(\Q)$ such that the Siegel set $\mathfrak{S} = K \, A_{\tau} \, \bm{\omega}$ satisfies
\[
G = \mathfrak{S} \, C \, \G.
\]
In particular, we can express, though not uniquely, each $g \in G$ as $g = k a n c \gamma$ with $k \in K$, $a \in A_{\tau}$, $n \in \bm{\omega}$, $c \in C$, and $\gamma \in \G$. Fix any norm $\|\cdot \|_{\mathfrak{a}}$ on $\mathfrak{a}$ and, for $r_0 > 0$, let $B_{\mathfrak{a}}(r_0)$ denote the corresponding ball centered at the origin with radius $r_0$. Let $r_0 > 0$ be such that $\mathfrak{a}_{\tau}$ is contained in $\mathfrak{a}^- + B_{\mathfrak{a}}(r_0)$. Let $k \in K$, $n \in \bm{\omega}$, $a \in A_{\tau}$, $c \in C$, and $\gamma \in \G$. We express $a = a^- \, \exp(O(1))$ with $a^- \in \exp(\mathfrak{a}^-)$. Using that $\lambda_\chi$ is right $\G$-invariant, that $K$ is compact, that $\bigcup_{a \in A_{\tau}} a \bm{\omega}a^{-1}$ is relatively compact (see \cite[Lemma~12.2]{Borel69}), and that $C \subset \bG(\Q)$ consists of rational elements and is finite, we have
\begin{equation} \label{eq:Proof-lem:Upper-Bound-Siegel}
\lambda_\chi(k a n c \gamma \G) \asymp \lambda_\chi(a^- \G).
\end{equation}
By the description of the $\Q$-weights of the representation $\pi_{\chi}$ in \eqref{eq:Q-weights}, for every $\Q$-weight $\mu$ of $\pi_{\chi}$, we have
\[
\chi(a^-) \leq \mu(a^-). 
\]
Hence, since we assumed $\bV_{\chi}(\Z)$ to be spanned over $\Z$ by an orthonormal basis consisting of weight vectors for the action of $T$, we have $\lambda_\chi(a^-) = \chi(a^-)$. Thus, for every $\bm{v} \in V_{\chi}$, we have $\lambda_\chi(a^-) \| \bm{v} \| \leq \|a^- \bm{v} \|$. Using that the norm $\| \cdot \|$ on $V_{\chi}$ is $K$-invariant, that $\bigcup_{a \in A_{\tau}} a \bm{\omega}a^{-1}$ is relatively compact, and that $C \subset \bG(\Q)$ is finite, there exists a constant $C_0 \geq 1$, independent of $f$, such that, for every $g \in G$ with Siegel decomposition $g = k a n c \gamma$ (and writing $a = a^- \exp(O(1))$ as above), we have
\begin{align*}
|S_{\chi} f(g \G)| &\leq \|f\|_{\infty} \, \# \big \{ \bm{v} \in \cP_{\chi} : \| g \bm{v} \| < r \big \} \\
&\leq \|f\|_{\infty} \, \# \big \{ \bm{v} \in \cP_{\chi} : \| \bm{v} \| < C_0 \, \lambda_\chi(a^-)^{-1} \, r \big \}.
\end{align*}
By the estimate in \eqref{eq:Number_Primitive_Bounded_Height}, we further have 
\[
\# \big \{ \bm{v} \in \cP_{\chi} : \| \bm{v} \| < C_0 \, \lambda_\chi(a^-)^{-1} \, r \big \} \, \ll_{\supp(f)} \, \lambda_\chi(a^-)^{-\beta_{\chi} d}.
\]
This together with \eqref{eq:Proof-lem:Upper-Bound-Siegel} now implies that
\[
|S_{\chi} f(g\G)| \ll_{\supp(f)} \|f\|_{\infty} \, \lambda_\chi(g \G)^{-\beta_\chi d},
\]
finishing the proof of the lemma.
\end{proof}

\begin{proof} [Proof of Theorem~\ref{thm:L1}]
We first show that $(1) \Rightarrow (2)$. In view of the Riesz-Markov-Kakutani representation theorem, since $\Lambda(f) = \int_{\Omega} S_\chi f \, \dd \mu$ defines a positive $G$-invariant linear functional on $B_{c}^{\infty}(\widetilde{X})$ by assumption, there exists a unique $G$-invariant Radon measure $\lambda_{\widetilde{X}}$ on $\widetilde{X}$ such that for all $f \in B_{c}^{\infty}(\widetilde{X})$,
\begin{equation} \label{eq:Formula-Proof-L1}
\int_{\Omega} S_\chi f \, \dd \mu_{\Omega}= \int_{\widetilde{X}} f \, d \lambda_{\widetilde{X}}.
\end{equation}
In particular, we have $\|S_\chi f\|_{L^1(\Omega)} \leq \|f\|_{L^1(\widetilde{X})}$ and $S_{\chi}$ extends to a bounded operator $S_{\chi} : L^1(\widetilde{X}) \rightarrow L^1(\Omega)$; the formula \eqref{eq:Formula-Proof-L1} continues to hold for all $f \in L^1(\widetilde{X})$.

To see the implication $(2) \Rightarrow (3)$, we first note that since $\widetilde{X} = G/L$ carries a positive $G$-invariant Radon measure $\lambda_{\widetilde{X}}$, $L$ must be a unimodular subgroup of $G$ (see, for instance, \cite[Theorem~2.51]{Folland15}). Fix a Haar measure $\mu_L$ on $L$ and the counting measure on the discrete group $\G_L = \G \cap L$.
By assumption, for every non-negative $f \in B_{c}^{\infty}(\widetilde{X})$, we have
\[
\int_{G/\Gamma} \sum_{\gamma \in \Gamma / \G_L} f(g \gamma \bm{e}_{\chi}) \, \dd \mu_{\Omega}(g\G) \leq \int_{\Omega} S_\chi f \, \dd \mu_{\Omega}< + \infty.
\]
Using a standard folding/unfolding argument, there exists (unique up to scaling) $G$ and $L$-invariant measures $\mu_{G/L}$ and $\mu_{L/\G_L}$ on $G/L$ and $L/\G_L$, respectively, that can be normalized such that for every non-negative $f \in B_{c}^{\infty}(\widetilde{X})$,
\[
\int_{G/\Gamma}  \sum_{\gamma \in \Gamma / \G_L} f(g \gamma \bm{e}_{\chi}) \, \dd \mu_{\Omega}(g\G) = \int_{G/L} \int_{L/\G_L} f(gl \bm{e}_{\chi}) \, \dd \mu_{L/\G_L}(l \G_L) \dd \mu_{G/L}(g L).
\]
Using that $L$ stabilizes the vector $\bm{e}_{\chi}$, we further have
\[
\int_{G/L} \int_{L/\G_L} f(gl \bm{e}_{\chi}) \dd \mu_{L/\G_L}(l \G_L) \dd \mu_{G/L}(g L) = \mu_{L/\G_L}(L/\G_L) \int_{G/L} f(g\bm{e}_{\chi}) \dd \mu_{G/L}(g L).
\]
Choosing $f \in B_{c}^{\infty}(\widetilde{X})$ non-negative so that $\int_{G/L} f(g\bm{e}_{\chi}) \, \dd \mu_{G/L}(g) > 0$, we have
\[
\mu_{L/\G_L}(L/\G_L) \leq \frac{\int_{\Omega} S_\chi f \, \dd \mu}{\int_{G/L} f(g\bm{e}_{\chi}) \, \dd \mu_{G/L}(g)} < + \infty,
\]
which shows that $\G_L$ is a lattice in $L$, as required.

Next we show that $(3) \Leftrightarrow (4)$. 
As recorded in Section \ref{sec:Structure}, the group $\bP = \bP_{\theta}$ is an almost direct product $\bP = \bM_{\theta} \bT_{\theta} \bU_{\theta}$.
The stabilizer $\bL$ in $\bG$ of $\bm{e}_{\chi}$ then satisfies $\bL^{\circ} = \bM_{\theta} (\bT_{\theta} \cap \ker(\chi))^{\circ} \bU_{\theta}$. Now, by a theorem of Borel and Harish-Chandra~\cite[Theorem 9.4]{BHC62}, the Lie group $L$ is unimodular and $\G_L$ is a lattice in $L$ if and only if $\bL^{\circ}$ does not admit any non-trivial $\Q$-characters. The group of $\Q$-characters $X^*(\bL^{\circ})_{\Q}$ can be identified, by restriction, with $X^*(\bM_{\theta}(\mathbf{T}_{\theta} \cap \ker(\chi))^{\circ})_{\Q}$. Since $\bM_{\theta}$ is an almost direct product of a $\Q$-anisotropic and a semisimple $\Q$-subgroup (see Section \ref{sec:Structure}), the latter is trivial if and only if the central $\Q$-split torus $(\mathbf{T}_{\theta} \cap \ker(\chi))^{\circ}$ is trivial. %In fact, if $(\mathbf{T}_{\theta} \cap \ker(\chi))^{\circ}$ is trivial, then $\bM_{\theta}$ does not admit any non-trivial $\Q$-characters since it is an almost direct product of a $\Q$-anisotropic and a semisimple $\Q$-subgroup (see Section \ref{sec:Structure}). Conversely, suppose that the $\Q$-split torus $(\mathbf{T}_{\theta} \cap \ker(\chi))^{\circ}$ is non-trivial.  Since $\bM_{\theta}$ is a normal $\Q$-subgroup of $\cZ(\bT_{\theta})$, the quotient map $q: \cZ(\bT_{\theta}) \rightarrow \cZ(\bT_{\theta}) / \bM_{\theta}$ is a $\Q$-morphism of algebraic $\Q$-groups. Since its restriction to $\bT_{\theta}$ is still surjective, we see that $\cZ(\bT_{\theta}) / \bM_{\theta}$ is a non-trivial $\Q$-split torus (see, for instance, \cite[Corollary10.4]{Borel69}). In particular, the composition of $q$ with any non-trivial $\Q$-character of $\cZ(\bT_{\theta}) / \bM_{\theta}$ gives a non-trivial $\Q$-character of $\cZ(\bT_{\theta})$, as required. 
The torus $\bT_{\theta}$ acts non-trivially on the line through $\bm{e}_{\chi}$ via the character $\chi$. In particular, the quotient $\bT_{\theta} / (\mathbf{T}_{\theta} \cap \ker(\chi))^{\circ}$ is one-dimensional. Thus $(\mathbf{T}_{\theta} \cap \ker(\chi))^{\circ}$ is trivial if and only if $\bT_{\theta}$ is one-dimensional if and only if $\bP$ is maximal, as claimed.

Next we show that $(4) \Rightarrow (5)$. By assumption, $\bP$ is a maximal parabolic $\Q$-subgroup. By Lemma~\ref{lem:Eisenstein_Single_Orbit}, it suffices to show that there exists $\varepsilon > 0$ such that $E_{\chi} (B_{c}^{\infty}) \subset L^{1+\varepsilon} (\Omega)$. Using a folding/unfolding argument similar to that in the proof of the implication $(2) \Rightarrow (3)$, for every $\varepsilon > 0$ and non-negative $f \in B_{c}^{\infty}(\widetilde{X})$, we have
\begin{align*}
\int_\Omega \left | E_{\chi} f \right |^{1+ \varepsilon}  \, \dd \mu_{\Omega} &= \int_\Omega \sum_{\gamma \in \G / \G_L} \left  ( f(g \gamma \bm{e}_{\chi}) \, \big ( E_{\chi} f (g\G) \big)^\varepsilon \right )  \, \dd \mu_{\Omega}(g\G) \\
&= \int_{G/L} \int_{L/\G_L} f(g l \bm{e}_{\chi}) \, \big (E_{\chi} f (g l \G) \big)^\varepsilon \, \dd \mu_{L/\G_L}(l \G_L) \, \dd \mu_{G/L}(g L) \\
&= \int_{G/L} f(g \bm{e}_{\chi}) \, \int_{L/\G_L} \big (E_{\chi} f (g l\G) \big)^\varepsilon \, \dd \mu_{L/\G_L}(l \G_L) \, \dd \mu_{G/L}(g L).
\end{align*}
Moreover, using the identification $\widetilde{X} = G/L = K/K_L \times A$, this further equals  
\[
\int_{\widetilde{X}} f(k a(y) \bm{e}_{\chi}) \, \int_{L/\G_L} \big ( E_{\chi} f (k a(y) l \G) \big )^\varepsilon \, \dd \mu_{L/\G_L}(l \G_L) \, d \lambda_{\widetilde{X}}(k a(y) \bm{e}_{\chi}).
\]
By Lemma \ref{lem:Upper-Bound-Siegel}, we can estimate $|E_{\chi} f(g\G)| \ll_{\supp(f)} \|f\|_{\infty} \lambda_{\chi} (g\G)^{-\beta_\chi d}$. Putting everything together, we obtain, with $\varepsilon' = \varepsilon \beta_{\chi} d$,
\begin{align} \label{eq:WeilIntegrationDeltaK}
&\int_\Omega \left | E_{\chi} f \right |^{1+ \varepsilon} \, \dd \mu_{\Omega} \\ \notag 
&\ll_{\supp f} \|f\|_{\infty}^{\varepsilon} \int_{\widetilde{X}} f(k a(y) \bm{e}_{\chi}) \int_{L/\G_L} \lambda_\chi(k a(y) l \G)^{-\varepsilon'} \, \dd \mu_{L/\G_L}(l \G_L) \dd \lambda_{\widetilde{X}}(k a(y) \bm{e}_{\chi}).
\end{align}
Since $f$ is continuous and compactly supported and $\lambda_\chi(k a(y) l)^{-\varepsilon'} \ll \lambda_\chi(l)^{-\varepsilon'}$ for all $l \in L$ and all $(k K_L, a(y)) \in K/K_L \times A$ such that $k a(y) \bm{e}_{\chi} \in \supp f$, it suffices to show that, for $\varepsilon' > 0$ small enough,
\[
\int_{L/\G_L} \lambda_\chi(l\G)^{-\varepsilon'} \, \dd \mu_{L/\G_L}(l \G_L) \, < \, + \infty. 
\]

By the maximality of $\bP$, we know from Section \ref{sec:Structure} (on the structure of parabolic subgroups) that $\bL^{\circ} = \bM \, \bU$ is the semidirect product of the unipotent radical of $\bP$ and a connected reductive $\Q$-subgroup $\bM$ of $\bG$, that, in turn, is an almost direct product $\bM = \bQ \, \bH$ of a connected $\Q$-anisotropic $\Q$-subgroup $\bQ$ and a semisimple $\Q$-subgroup $\bH$ satisfying the following conditions:
\begin{itemize}
\item the $\Q$-rank of $\bH$ equals $\mathrm{rank}_\Q \, \bG - 1$;
\item $\bT_{\bH} = (\bH \cap \bT)^{\circ}$ is a maximal $\Q$-split torus of $\bH$;
\item $\bP_{0,\bH} = (\bH \cap \bP_0)^{\circ}$ is a minimal parabolic $\Q$-subgroup of $\bH$;
\item $K_H = K \cap H$ is a maximal compact subgroup of $H$;
\item $\Delta_H = \Delta \smallsetminus \{\alpha\}$ is the set of simple roots with ordering associated to $\bP_{0,\bH}$;
\item $[\Delta_H]$ is the system of $\Q$-roots of $\bH$.
\end{itemize}
For any subgroup $N$ of $G$, let us denote by $\G_N$ the intersection $\G \cap N$. The subgroup $\G_Q \, \G_H \, \G_U$ has finite index in $\G_L$ (see \cite[Corollary~6.4]{BHC62}) and $Q \, H \, U$ has finite index in $L$. Without loss, we may assume that $\G_L = \G_Q \, \G_H \, \G_U$ and that $L = Q \, H \, U$. If $\Omega_Q$, $\Omega_H$, and $\Omega_U$ are fundamental sets (in the sense of \cite[Definition~9.6]{Borel69}) for $\G_Q$ in $Q$, $\G_H$ in $H$, and $\G_U$ in $U$, respectively, then $\Omega_Q \, \Omega_H \, \Omega_U$ is a fundamental set for $\G_L$ in $L$. Moreover, we can choose the sets $\Omega_Q$ and $\Omega_U$ to be compact (since $\bQ$ is $\Q$-anisotropic and $\bU$ unipotent). Denoting $\mu_Q$, $\mu_H$, and $\mu_U$ the corresponding Haar measures, it suffices to show the finiteness of 
\[
\int_{\Omega_Q} \int_{\Omega_H} \int_{\Omega_U}  \lambda_\chi(q h u \G)^{-\varepsilon'} \, \dd \mu_Q(q) \dd \mu_H(h) \dd \mu_U(u). 
\]
Since $\Omega_Q$ is compact and $\lambda_\chi(q g \G) \asymp_{\Omega_Q} \lambda_\chi(q g \G)$ for all $q \in \Omega_Q$ and $g \in G$, we further reduce to show that 
\[
\int_{\Omega_H} \int_{\Omega_U}  \lambda_\chi(h u \G)^{-\varepsilon'} \, \dd \mu_H(h) \dd \mu_U(u) \, < \, + \infty. 
\]
The argument now relies again on reduction theory (see \cite[Section~12, Theorem~13.1]{Borel69}), but this time applied to $\G_H$ and $H$. We let $\mathfrak{a}_H$ be the Lie algebra of $T_H^{\circ}$ and, for every $\tau \geq 0$, let $\mathfrak{a}_{\tau,H} = \{Y \in \mathfrak{a}_H : \forall \, \beta \in \Delta_H, \, \beta(Y) \leq \tau \}$. We set $A_{\tau,H} = \exp \, \mathfrak{a}_{\tau,H}$ and an note that $\mathfrak{a}_H^- = \mathfrak{a}_{0,H}$ is the negative Weyl chamber of $\mathfrak{a}_H$ with respect to $\Delta_H$. Let $\bM_{0,\bH}$ be the largest $\Q$-anisotropic $\Q$-subgroup of the centralizer $\cZ_{\bH}(\bT_H)^{\circ}$ in $\bH$ of $\bT_H$ and let $\bU_{0,\bH}$ be the unipotent radical of the minimal parabolic $\Q$-subgroup $\bP_{0,\bH}$. There exist $\tau > 0$, a compact subset $\bm{\omega}_H$ of $M_{0,H} U_{0,H}$, and a finite subset $C_H \subset \bH(\Q)$ such that the Siegel set $\mathfrak{S}_H = K_H \, A_{\tau,H} \, \bm{\omega}_H$ satisfies
\[
H = \mathfrak{S}_H \, C_H \, \G_H.
\]
We let $\Omega_H = \mathfrak{S}_H \, C_H$. We claim that $\lambda_{\chi}(a \G) \ll \lambda_{\chi}(h u \G)$ for all $h = kanc$ with $k \in K_H$, $a \in A_{\tau, H}$, $n \in \bm{\omega}_H$, $c \in C_H$, and $u \in \Omega_U$. Indeed, using the fact that $\bigcup_{a \in A_{\tau,H}} a \bm{\omega}_H a^{-1}$ is relatively compact (see \cite[Lemma~12.2]{Borel69}) and since $\min_{\bm{v} \in \bV_{\chi}(\Z) \smallsetminus \{\bm{0}\}, \, c \in C_H, \, u \in \Omega_U} \| c u \bm{v}\| > 0$ (since $C_H$ is finite, $\Omega_U$ is compact, and $\bV_{\chi}(\Z) \smallsetminus \{\bm{0}\}$ is discrete), we have
\[
\lambda_\chi(h u \G) = \min_{\bm{v} \in \mathbf{V}_\chi(\Z) \smallsetminus \{0\}} \|k a n (a)^{-1} a c u \bm{v}\| \gg \lambda_\chi(a) \min_{\bm{v} \in \mathbf{V}_\chi(\Z)\smallsetminus \{0\}, \, c \in C_H, \, u \in \Omega_U} \| c u \bm{v}\|,
\]
which yields the claim. Let $\rho : \mathfrak{a}_H \rightarrow (0,+\infty)$ be the sum of the positive roots of $H$ relative to $T_H$ with multiplicities counted; it can be written as $\rho = \sum_{\beta \in \Delta_H} n_\beta \beta$ where the $n_\beta$ are strictly positive integers. Let $\log : T_H^{\circ} \rightarrow \mathfrak{a}_{H}$ be the logarithm map. According to \cite[Proposition~8.44]{Knapp23}, since we have a Cartan decomposition $H = K_H P_{0,H}^{\circ}$ with $P_{0,H}^{\circ} = M_{0,H}^{\circ} \, T_H^{\circ} \, U_{0,H}$, the corresponding Haar measures $\mu_{K_H}$, $\mu_{M_{0,H}^{\circ}}$, $\mu_{T_H^{\circ}}$, and $\mu_{U_{0,H}}$ can be normalized so that
\[
\dd \mu_{H}(h) = e^{\rho(\log(a))} \, \dd \mu_{K_H}(k) \, \dd \mu_{M_{0,H}^{\circ}}(m) \, \dd \mu_{T_H^{\circ}}(a) \, \dd \mu_{U_{0,H}}(u).
\]
Putting everything together and using the structure of the Siegel set $\mathfrak{S}_H$,
\[
\int_{\Omega_H} \int_{\Omega_U}  \lambda_\chi(h u \G)^{-\varepsilon'} \, \dd \mu_H(h) \dd \mu_U(u) \, \ll \, \int_{A_{\tau,H}} \lambda_\chi(a)^{-\varepsilon'} e^{\rho(\log(a))} \, \dd \mu_{T_H^{\circ}}(a).
\]
The exponential map $\exp : \mathfrak{a}_H \rightarrow T_H^{\circ}$ is an isomorphism that carries a Lebesgue measure to a Haar measure. Hence for a suitable normalization of the Lebesgue measure $\dd Y$ on $\mathfrak{a}_H$, we have 
\[
\int_{A_{\tau,H}} \lambda_\chi(a)^{-\varepsilon'} e^{\rho(\log(a))} \, \dd \mu_{T_H^{\circ}}(a) = \int_{\mathfrak{a}_{\tau,H}} \lambda_\chi(\exp(Y))^{-\varepsilon'} e^{\rho(Y)} \, \dd Y.
\]
Since the group $\bH$ is semisimple, the set of differentials associated to the elements $\beta \in \Delta_H$, which we continue to denote by $\beta : \mathfrak{a}_H \rightarrow \R$, forms a basis of the dual $\mathfrak{a}_H^*$. Via the identification of $\mathfrak{a}_H^*$ with $\mathfrak{a}_H$ (through the choice of an admissible inner product $\langle \cdot, \cdot \rangle$ on $\mathfrak{a}_H$), this gives a basis $(\beta^*)_{\beta \in \Delta_H}$ of $\mathfrak{a}_H$. Let $\cI$ denote the set of $\Q$-weights on $V_\chi$ for the action of $T$. Since $\bV_{\chi}(\Z)$ is spanned by an orthonormal basis consisting of corresponding weight vectors, we observe that, for every $a \in T^\circ$ (in particular, for every $a \in A_{\tau,H}$), $\lambda_\chi(a) = \min_{\widehat{\chi} \in \cI} \, \widehat{\chi}(a)$ and therefore
\[
\lambda_\chi(a)^{-\varepsilon'} = \left ( \min_{\widehat{\chi} \in I} \, \widehat{\chi}(a) \right )^{-\varepsilon'} = \max_{\widehat{\chi} \in I} \, \left ( \widehat{\chi}(a)^{-\varepsilon'} \right ) \leq \sum_{\widehat{\chi} \in I} \widehat{\chi}(a)^{-\varepsilon'}.
\]
Given a character $\widehat{\chi} \in \cI$, let us also denote by $\widehat{\chi} : \mathfrak{a}_H \rightarrow \R$ the corresponding differential and let us express $\widehat{\chi} = \sum_{\beta \in \Delta_H} c_\beta(\widehat{\chi}) \, \beta$ $(c_\beta(\widehat{\chi}) \in \R)$. The claim now amounts to show that for every $\widehat{\chi} \in I$ the integral
\begin{align*}
\int_{\mathfrak{a}_{\tau,H}} \widehat{\chi}(\exp(Y))^{-\varepsilon'} e^{\rho(Y)} \, \dd Y &= \int_{\mathfrak{a}_{\tau,H}} e^{\langle \rho - \varepsilon' \, \widehat{\chi}, Y \rangle} \, \dd Y \\
&= \prod_{\beta \in \Delta_H} \int_{-\infty}^{\ln \tau} e^{\langle \rho - \varepsilon' \, \widehat{\chi} \, , \, x_{\beta^*} \beta^*  \rangle} \, \dd x_{\beta^*} \\
&= \prod_{\beta \in \Delta_H} \int_{-\infty}^{\ln \tau} e^{(n_\beta - \varepsilon' \, c_\beta(\widehat{\chi})) x_{\beta^*}} \, \dd x_{\beta^*}
\end{align*}
converges. By choosing $\varepsilon' > 0$ sufficiently small, the implication $(4) \Rightarrow (5)$ follows.

Since $(5) \Rightarrow (1)$ is immediate, this concludes the proof of Theorem \ref{thm:L1}.
\end{proof}

\subsection{$L^\infty$-integrability}
In this section, we prove Theorem \ref{thm:Linfty}. 

\begin{proof} [Proof of Theorem \ref{thm:Linfty}]
We first show that $(1) \Leftrightarrow (2)$. Assume that $\mathrm{rank}_{\Q} \, \bG = 1$. In particular, the set of simple roots $\Delta = \{\alpha\}$ consists of a single element and the proper parabolic $\Q$-subgroup $\bP = \bP_0$ is minimal. We need to show that $S_{\chi}(B_{c}^{\infty}(\widetilde{X})) \subset  L^{\infty}(\Omega)$. Let $r > 0$ and let $B_{\widetilde{X}}(r) = \{\bm{v} \in \widetilde{X} \, : \, \|\bm{v}\| < r\}$ be the open ball centered at the origin in $\widetilde{X}$ of radius $r$. It suffices to show that $S_{\chi} \mathbbm{1}_{B_{\widetilde{X}}(r)}$ is bounded on $\Omega = G/\G$. Since the maximal $\Q$-split torus $\bT$ is one-dimensional, we have $A = \bT(\R)^{\circ}$. For each $\tau > 0$, let $A_{\tau} = \{a(y) \in A : \alpha(a(y)) = y^{-1} \leq e^{\tau}\}$. We recall that $\bM_0$ denotes the largest $\Q$-anisotropic $\Q$-subgroup of the centralizer $\cZ_{\bG}(\bT)^{\circ}$ in $\bG$ of $\bT$ and $\bU_0$ denotes the unipotent radical of $\bP_0$. By reduction theory of $G$ relative to $\G$, there exists $\tau > 0$, a relatively compact subset $\bm{\omega}$ of $M_0U_0$, and a finite subset $C \subset \bG(\Q)$ such that the Siegel set $\mathfrak{S} = K A_{\tau} \bm{\omega}$ satisfies
\[
G = \mathfrak{S} \, C \, \G.
\]
This allows us to express $g$ as $g = k a(y) n c \gamma$, where $k \in K$, $a(y) \in A_{\tau}$, $n \in \bm{\omega}$, $c \in C$, and $\gamma \in \G$. Using that $B_{\widetilde{X}}(r)$ is $K$-invariant, that $\bigcup_{a(y) \in A_{\tau} } a(y) \bm{\omega} a(y)^{-1}$ is relatively compact, and that $C$ is a finite set contained in $\bG(\Q)$, there exists $R \geq r$, depending on the choice of the Siegel set $\mathfrak{S}$ and the finite set $C$, such that 
\begin{align*}
S_{\chi} \mathbbm{1}_{B_{\widetilde{X}}(r)} (g) = \# ( B_{\widetilde{X}}(r) \cap g \cP_{\chi}) \leq \# ( B_{\widetilde{X}}(R) \cap  a(y) \cP_{\chi}). 
\end{align*}
Hence it suffices to show that the right-hand side is bounded on $A_{\tau}$. This in turn will follow from the fact that all vectors $\bm{v} \in \cP_{\chi}$, different from $\pm \bm{e}_{\chi}$, get expanded under the action of $a(y)$ as $y \rightarrow +\infty$. By assumption, the Weyl group $W \cong \Z / 2\Z$ consists of $2$ elements and we let $w \in N_G(\bT)(\Q)$ be a representative of the non-trivial element in $W$. By~\cite[Th\'eor\`eme~5.15]{BT65}, the Bruhat decomposition $G = P \sqcup Pw P$ holds. Since $w$ acts on $\chi$ by $w \chi = - \chi$, the element $a(y)$ acts on $w \bm{e}_{\chi}$ by 
\[
a(y) w \bm{e}_{\chi} = w w^{-1} a(y) w \bm{e}_{\chi} = w (-\chi)(a(y)) \bm{e}_{\chi} = y^{- \chi(Y_\alpha)} w \bm{e}_{\chi},
\]
and since $\chi(Y_\alpha) < 0$, $w \bm{e}_{\chi}$ gets expanded as $y \rightarrow +\infty$. We may assume that $w \bm{e}_{\chi}$ is an element of the chosen orthonormal basis of $V_{\chi}$, that consists of weight vectors. We first show that any $\bm{v} \in \cP_{\chi}$ different from $\pm \bm{e}_{\chi}$ satisfies $|\langle \bm{v}, w \bm{e}_{\chi} \rangle| \geq 1$. Since $G = P \sqcup P w P$, there exist $p_1, p_2 \in P$ such that $\bm{v} = p_1 w p_2 \bm{e}_{\chi}$. Now, applying $a(y)$ gives
$$
a(y) \bm{v} = a(y) p_1 a(y)^{-1} w w^{-1} a(y) w \chi(p_2) \bm{e}_{\chi} = a(y) p_1 a(y)^{-1} w y^{-\chi( Y_\alpha)} \chi(p_2) \bm{e}_{\chi}.
$$
Noting that $a(y) p_1 a(y)^{-1} w$ converges to some element $p_{\infty} w$ with $p_{\infty} \in P$ as $y$ tends to infinity, we deduce that $\| a(y) \bm{v} \|$ grows at the highest possible rate: $y^{-\chi(Y_{\alpha})}$. Hence using that $\langle \bm{v}, w \bm{e}_{\chi} \rangle \in \Z$, we must have $| \langle \bm{v}, w \bm{e}_{\chi} \rangle | \geq 1$. Therefore, choosing $y$ so large that $y^{-\chi(Y_{\alpha})} > R$, we have for all $\bm{v} \in \cP_{\chi} \smallsetminus \{\pm \bm{e}_{\chi}\}$,
$$
\| a(y) \bm{v} \| \geq \| a(y) w \bm{e}_{\chi} \| = y^{-\chi(Y_{\alpha})} > R.
$$
Hence for all $y$ large enough, $B_{\widetilde{X}}(R) \cap  a(y) \cP_{\chi} = \{\pm \bm{e}_{\chi}\}$, as required. 

Conversely, assume that $\mathrm{rank}_{\Q} \, \bG \geq 2$. We will show that for every radius $r > 1$, the annulus $A(r) = B_{\widetilde{X}}(r) \smallsetminus B_{\widetilde{X}}(r^{-1})$ satisfies that $S_{\chi} \mathbbm{1}_{A(r)} \notin L^{\infty}(\Omega)$. Let $\ka$ be the Lie algebra of $\bT(\R)^\circ$ and let $\theta \subset \Delta$ be the set of simple roots such that $\bP = \bP_{\theta}$. Since $\dim \ka \geq 2$, there exists $Y \in \ka$ and a simple root $\beta \in \Delta \smallsetminus \theta $ such that $\chi(Y) = 0$ and $\beta(Y) > 0$. Let $U_{-\beta}$ be the root subgroup associated to $-\beta$; it acts non-trivially on $\bm{e}_{\chi}$. In particular, the orbit $(U_{-\beta} \cap \G) \bm{e}_{\chi} \subset \cP_{\chi}$ is infinite. For every $y \in \R_+^{\times}$, let $\widetilde{a}(y) = \exp(\ln(y) Y)$. Then, by the choice of $Y \in \ka$, for every $Z \in \ku_{-\beta} = \Lie U_{-\beta}$, 
\[
\widetilde{a}(y) \exp(Z) \bm{e}_{\chi} = \exp(\underbrace{-\beta(Y)}_{< 0} \ln(y) Z) y^{\overbrace{\chi(Y)}^{=0}} \bm{e}_{\chi} \rightarrow \bm{e}_{\chi}, \quad \text{ as $y \rightarrow +\infty$.}
\]
In particular, for every $r > 1$, we have, as required,
\[
S_{\chi} \mathbbm{1}_{A(r)}(\widetilde{a}(y) \G) \geq \# \{ \widetilde{a}(y) (U_{-\beta} \cap \G) \bm{e}_{\chi}  \cap A(r) \} \rightarrow + \infty, \quad \text{ as $y \rightarrow +\infty$.}
\]

Finally, we show that $(2) \Leftrightarrow (3)$. Assume that $\mathrm{rank}_{\Q} \, \bG = 1$. We need to show that $\G_L$ is a cocompact lattice in $L$. By a theorem of Borel and Harish-Chandra~\cite[Theorem~11.6]{BHC62}, the quotient $L / \G_L$ is compact if and only if $X^*(\bL^\circ)_{\Q} = \{1\}$ and $\bL(\Q)$ consists of semisimple elements. Since $\Q$ is of characteristic zero, this is equivalent to the fact that $\bL^{\circ}$ is $\Q$-anisotropic (see just below \cite[Definition~10.5]{Borel69}). Since $\dim \bT = 1$, the parabolic $\Q$-subgroup $\bP = \bP_0$ is minimal and the decomposition in \eqref{eq:Langlands} takes the form
\[
\bP = \bM \, \bA \, \bU,
\]
where $\bM$ is the largest connected $\Q$-anisotropic $\Q$-subgroup of $\cZ_{\bG}(\bT)$, $\bA = \bT$, and $\bU$ is the unipotent radical of $\bP$. In particular, $\bL^\circ = \bM \, \bU$ is the semi-direct product of two $\Q$-anisotropic $\Q$-subgroups and as such $\Q$-anisotropic. 

Conversely, assume that $\G_L$ is a cocompact lattice in $L$. By Theorem~\ref{thm:L1}, the parabolic $\Q$-subgroup $\bP = \bP_{\theta}$ is maximal (and hence $\# \theta = \# \Delta - 1)$. Hence to show that $\mathrm{rank}_{\Q} \, \bG = \# \Delta = 1$, it suffices to show that $\theta = \emptyset$ is the empty set. By the structure of parabolic $\Q$-subgroups as in Section \ref{sec:Structure}, the parabolic $\bP$ contains a connected semisimple $\Q$-subgroup $\bH$ of $\Q$-rank $\# \theta$. In fact, since $X^*(\bH^\circ)_{\Q} = \{1\}$, we have $\bH \subset \bL$. By \cite[Theorem 8.7]{Borel69}, since $\G_L$ is a cocompact lattice in $L$, the $\Q$-group $\bL$ is $\Q$-anisotropic. This implies that $\bH$ is $\Q$-anisotropic and hence that $\# \theta = \mathrm{rank}_{\Q}\, \bH = 0$. The proof of Theorem $\ref{thm:Linfty}$ is complete. 
\end{proof}

\subsection{$L^2$-integrability}

In this section, we prove Theorem \ref{thm:L2}. The following lemma turns the $L^2$-condition into an $L^1$-condition. 
\begin{lemma} \label{lem:Period}
The Siegel transform $S_\chi$ maps $B_{c}^{\infty}(\widetilde{X})$ into $L^2(\Omega)$ if and only if
\[
\forall \, f \in B_{c}^{\infty}(\widetilde{X}) \, \quad \int_{L/\G_L} |S_{\chi} f | \, \dd \mu_{L/\G_L} < + \infty.
\]
\end{lemma}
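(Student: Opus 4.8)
Throughout I would assume, without loss of generality, that $\bP$ is a maximal parabolic $\Q$-subgroup. Indeed, the right-hand side of the stated equivalence is only meaningful once $\G_L$ is a lattice in $L$ (a locally compact group containing a lattice is unimodular), and by Theorem~\ref{thm:L1} this forces $\bP$ to be maximal; likewise, if $S_\chi$ maps $B_c^\infty(\widetilde{X})$ into $L^2(\Omega)\subset L^1(\Omega)$, then Theorem~\ref{thm:L1} again makes $\bP$ maximal. Thus all of Section~\ref{sec:Measure} is available, in particular the Weil formula \eqref{eq:WeilIntegrationFormula*}. Replacing $f$ by $|f|$ (and using $|S_\chi f|\le S_\chi|f|$), it suffices to treat non-negative $f$. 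The engine of the proof is a double unfolding of the second moment of $S_\chi f$, in the spirit of Rogers' formula.

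The first step is to establish the identity: for every non-negative $f\in B_c^\infty(\widetilde{X})$, in $[0,+\infty]$,
\begin{equation*}
\int_\Omega (S_\chi f)^2\,\dd\mu_\Omega \;=\; \sum_{i=1}^m c_i\int_{\widetilde{X}} f(\bm{w})\,P_i^{(f)}(\bm{w})\,\dd\lambda_{\widetilde{X}}(\bm{w}),
\end{equation*}
where, for $\bm{w}\in\widetilde{X}$ and any $g\in G$ with $g\bm{v}_i=\bm{w}$, we set $P_i^{(f)}(\bm{w})=\int_{L_{\bm{v}_i}/\G_{\bm{v}_i}} S_\chi f(g l\G)\,\dd\mu_{L_{\bm{v}_i}/\G_{\bm{v}_i}}(l\G_{\bm{v}_i})$ (well defined, i.e.\ independent of the choice of $g$, by left-invariance of $\mu_{L_{\bm{v}_i}/\G_{\bm{v}_i}}$), $\G_{\bm{v}_i}=\G\cap L_{\bm{v}_i}$, the inner integral runs over the closed orbit $L_{\bm{v}_i}\G/\G$, and the constants $c_i>0$ normalise the pushforward of $\mu_{G/L_{\bm{v}_i}}$ under $gL_{\bm{v}_i}\mapsto g\bm{v}_i$ to equal $c_i\lambda_{\widetilde{X}}$ (so $c_1=1$ since $\bm{v}_1=\bm{e}_\chi$ and $L_{\bm{v}_1}=L$). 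To prove the identity I would write one of the two factors $S_\chi f$ as $\sum_{i=1}^m\sum_{\gamma\in\G/\G_{\bm{v}_i}}f(g\gamma\bm{v}_i)$ using $\cP_\chi=\bigsqcup_i\G\bm{v}_i$ from \eqref{eq:GammaOrbits}, unfold the $i$-th sum over $\G_{\bm{v}_i}$-cosets (legitimate because $g\mapsto S_\chi f(g\G)\,f(g\bm{v}_i)$ is right $\G_{\bm{v}_i}$-invariant, $\bm{v}_i$ being fixed by $\G_{\bm{v}_i}$), and then disintegrate $\int_{G/\G_{\bm{v}_i}}$ over $G/L_{\bm{v}_i}$ with fibre $L_{\bm{v}_i}/\G_{\bm{v}_i}$; this uses that each $L_{\bm{v}_i}=g_{\bm{v}_i}Lg_{\bm{v}_i}^{-1}$ is unimodular with $\G_{\bm{v}_i}$ a lattice, by Borel--Harish-Chandra, since $X^*(\bL_{\bm{v}_i}^\circ)_\Q\cong X^*(\bL^\circ)_\Q=\{1\}$ by the maximality of $\bP$. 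After pulling $f(gl\bm{v}_i)=f(g\bm{v}_i)$ out and identifying $G/L_{\bm{v}_i}$ with $\widetilde{X}$, the identity follows; note the $i=1$ term contains $P_1^{(f)}(\bm{e}_\chi)=\int_{L/\G_L}S_\chi f\,\dd\mu_{L/\G_L}$.

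For the forward implication I would retain only the $i=1$ term (all terms being non-negative), so that $\int_{\widetilde{X}} f\,P_1^{(f)}\,\dd\lambda_{\widetilde{X}}\le\int_\Omega(S_\chi f)^2\,\dd\mu_\Omega<\infty$ for every non-negative $f\in B_c^\infty(\widetilde{X})$, and then localise. Given a non-negative continuous compactly supported $g$, fix a compact symmetric neighbourhood $B$ of $e$ in $G$, put $f_0(\bm{x})=\sup_{h\in B}g(h^{-1}\bm{x})$ (continuous and compactly supported), pick a non-negative continuous bump $\phi$ with $\phi(\bm{e}_\chi)>0$ supported in the neighbourhood $V=B^\circ\bm{e}_\chi$ of $\bm{e}_\chi$, and set $f=f_0+\phi\in B_c^\infty(\widetilde{X})$. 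For $\bm{w}=h\bm{e}_\chi\in V$ with $h\in B$ one has $f_0(h\bm{y})\ge g(\bm{y})$ for all $\bm{y}$, hence $S_\chi f(hl\G)\ge S_\chi g(l\G)$ for all $l$, and therefore $P_1^{(f)}(\bm{w})\ge\int_{L/\G_L}S_\chi g\,\dd\mu_{L/\G_L}$. Since $\int_{\widetilde{X}}f\,P_1^{(f)}\,\dd\lambda_{\widetilde{X}}<\infty$ while $\int_V f\,\dd\lambda_{\widetilde{X}}\ge\int_V\phi\,\dd\lambda_{\widetilde{X}}>0$ (the invariant measure $\lambda_{\widetilde{X}}$ having full support), this forces $\int_{L/\G_L}S_\chi g\,\dd\mu_{L/\G_L}<\infty$; majorising an arbitrary $f\in B_c^\infty(\widetilde{X})$ by such a continuous $g\ge|f|$ then gives $\int_{L/\G_L}|S_\chi f|\,\dd\mu_{L/\G_L}<\infty$, as wanted.

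For the converse I would use the full identity and bound each term separately. Fixing non-negative $f\in B_c^\infty(\widetilde{X})$ with $C=\supp f$, so $f\le\|f\|_\infty\1_C$, I push the $\bm{w}$-integral over $C$ forward to the compact set $\pi_i^{-1}(C)\subset G/L_{\bm{v}_i}$ (where $\pi_i\colon gL_{\bm{v}_i}\mapsto g\bm{v}_i$ is a homeomorphism onto $\widetilde{X}$), majorise this by an integral over $G$ against a compactly supported cutoff, exchange the order of integration, and use that $f$ has compact support together with the unimodularity of $G$, obtaining
\begin{equation*}
\int_{\widetilde{X}} f\,P_i^{(f)}\,\dd\lambda_{\widetilde{X}}\;\ll_{f,i}\;\int_{L_{\bm{v}_i}/\G_{\bm{v}_i}} S_\chi\1_{C_i^*}(l\G)\,\dd\mu_{L_{\bm{v}_i}/\G_{\bm{v}_i}}(l\G_{\bm{v}_i})
\end{equation*}
for a suitable compact $C_i^*\subset\widetilde{X}$, the key identity being $\sum_{\bm{v}\in\cP_\chi}\1_{C_i^*}(l\bm{v})=S_\chi\1_{C_i^*}(l\G)$. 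For $i=1$ this is finite by hypothesis. For $i\ge2$ I conjugate by $g_{\bm{v}_i}\in\bG(\Q)$: since $g_{\bm{v}_i}^{-1}\cP_\chi$ is precisely the set of primitive elements of the commensurable lattice $g_{\bm{v}_i}^{-1}\bV_\chi(\Z)$ lying in $\widetilde{X}$, and $g_{\bm{v}_i}^{-1}\G_{\bm{v}_i}g_{\bm{v}_i}=(g_{\bm{v}_i}^{-1}\G g_{\bm{v}_i})\cap L$, the period over $L_{\bm{v}_i}/\G_{\bm{v}_i}$ rewrites as the analogous period over $L/\G'_L$ attached to the arithmetic datum $(g_{\bm{v}_i}^{-1}\G g_{\bm{v}_i},\,g_{\bm{v}_i}^{-1}\bV_\chi(\Z))$; one then invokes the exact analogue of Lemma~\ref{lem:Eisenstein_Single_Orbit} for such period integrals — the finiteness of $\int_{L/\G'_L}|S'_\chi h|\,\dd\mu_{L/\G'_L}$ for all $h\in B_c^\infty(\widetilde{X})$ being independent of the choice of arithmetic subgroup and stable lattice — proved by the same comparison of fundamental domains of commensurable lattices as in the proof of Lemma~\ref{lem:Eisenstein_Single_Orbit}. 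This makes every term finite, so $S_\chi f\in L^2(\Omega)$. The step I expect to be the main obstacle is precisely handling the period integral: the closed orbit $L\G/\G\cong L/\G_L$ is in general \emph{not} compact ($\G_L$ is typically a non-uniform lattice in $L$), so crude majorisations over it are unavailable — the content of the lemma is exactly that the double unfolding converts the $L^2$-mass of $S_\chi f$ on the non-compact space $\Omega$ into this orbital period, and, in the converse direction, that the commensurability argument lets one carry the period across the finitely many conjugate orbits $L_{\bm{v}_i}\G/\G$.
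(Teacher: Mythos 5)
Your proof starts from the same engine as the paper's — the Weil double-unfolding of $\int_\Omega (S_\chi f)^2\,\dd\mu_\Omega$ — but differs in two substantive respects. First, the paper's unfolding identities retain only the contribution of the orbit $\G\bm{e}_\chi$ (in effect passing, without saying so, to the incomplete Eisenstein series $E_\chi$; for the full $S_\chi$ when $m\ge 2$ the displayed equality is really just the one-sided bound $\int_\Omega(S_\chi f)^2\ge\int_{\widetilde{X}}f\,P_1^{(f)}\,\dd\lambda_{\widetilde{X}}$), whereas you keep all $m$ orbits and bound each resulting period. This matters only in the converse direction, where an \emph{upper} bound on the $L^2$-mass is needed, and your explicit treatment is the more careful one. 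Second, your forward argument is different in flavour: the sup-majorant $f_0$ together with the bump $\phi$ extract finiteness of the period at $\bm{e}_\chi$ itself, whereas the paper constructs a $K$-invariant $\rho$ dominating all $(ka(y))^{-1}$-translates of $|f|$ over a small box, deduces almost-everywhere finiteness of the period from the unfolding, and then transports it back; both are correct, and yours avoids the a.e.\ step.

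The one point I would not wave through is the commensurability step you yourself flag. You invoke "the exact analogue of Lemma~\ref{lem:Eisenstein_Single_Orbit} for period integrals" and claim it follows "by the same comparison of fundamental domains." It does not follow by the same argument: the substitution $g\mapsto g g_{\bm{v}}^{-1}$ in the proof of Lemma~\ref{lem:Eisenstein_Single_Orbit} leaves the domain $G/\G'$ alone, but here it replaces the stabiliser $L$ by its conjugate $L_{\bm{v}_i}=g_{\bm{v}_i}L g_{\bm{v}_i}^{-1}$, so the integral is carried to $L/(g_{\bm{v}_i}^{-1}\G g_{\bm{v}_i}\cap L)$. You then need two things not present in the original proof: (a) a comparison of $g_{\bm{v}_i}^{-1}\cP_\chi$ (the primitive set of the conjugate lattice $g_{\bm{v}_i}^{-1}\bV_\chi(\Z)$ in $\widetilde{X}$) with $\cP_\chi$ — for instance by showing the former lies in a finite union of rational dilates of the latter — and (b) a comparison of fundamental domains of the commensurable lattices $\G\cap L$ and $g_{\bm{v}_i}^{-1}\G g_{\bm{v}_i}\cap L$ \emph{inside $L$}, passing through the common finite-index subgroup $(\G\cap g_{\bm{v}_i}^{-1}\G g_{\bm{v}_i})\cap L$ and using the right $\G\cap L$-invariance of $\ell\mapsto S_\chi h(\ell\G)$. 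Both steps can be carried out, so the overall plan is sound, but they constitute a new lemma rather than a re-run of Lemma~\ref{lem:Eisenstein_Single_Orbit}, and in a finished proof they should be written out rather than invoked by analogy.
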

For any function $f : \widetilde{X} \rightarrow \R$ and $g \in G$, we let $(g \, f) : \widetilde{X} \rightarrow \R$ denote the function 
\[
\forall \, \bm{v} \in V_{\chi}, \quad (g \, f)(\bm{v}) = f(g^{-1} \bm{v}).
\]

\begin{proof}
Suppose first that the Siegel transform $S_\chi$ maps $B_{c}^{\infty}(\widetilde{X})$ into $L^2(\Omega)$ (thus, in particular, into $L^1(\Omega)$). By Theorem \ref{thm:L1}, the parabolic $\Q$-subgroup $\bP$ must be maximal. Fix $f \in B_{c}^{\infty}(\widetilde{X})$; we need to show that $\int_{L/\G_L} |S_{\chi} f | \, \dd \mu_{L/\G_L}$ converges.

Let $\rho \in B_{c}^{\infty}(\widetilde{X})$ be a non-negative $K$-invariant function such that $\rho(\bm{e}_{\chi}) > 0$ and such that for every $k \in K$ and $y \in [1/2, 3/2]$, $|f| \leq ({(k a(y))^{-1}} \, \rho)$.  
By assumption, 
\[
\int_\Omega \left | S_{\chi} \rho \right |^{2}  \, \dd \mu_{\Omega} < +\infty.
\]
On the other hand, using the identification $\widetilde{X} = G/L = K/K_L \times A$ and the corresponding measure description of $\lambda_{\widetilde{X}}$ in Equation \eqref{eq:Measure-On_Tilde-X}, by applying again a standard folding/unfolding argument, we have
\begin{align*}
\int_\Omega \left | S_{\chi} \rho \right |^{2}  \, \dd \mu_{\Omega} &= \int_{G / \G} \sum_{\gamma \in \G / \G_L} \big (\rho(g \gamma \bm{e}_{\chi}) (S_{\chi} \rho)(g \G) \big ) \, \dd \mu_{\Omega}(g \G) \\
&= \int_{G / \G_L} \rho(g \gamma \bm{e}_{\chi}) (S_{\chi} \rho)(g\G) \, \dd \mu_{G / \G_L}(g \G_L) \\
&= \int_{\widetilde{X}} \rho(k a(y) \bm{e}_{\chi})  \, \Bigg (\int_{L/\G_L} S_{\chi} \rho(k a(y) l \G) \, \dd \mu_{L/\G_L}(l \G_L) \Bigg ) \, d \lambda_{\widetilde{X}}(k a(y) \bm{e}_{\chi}).
\end{align*}
Since $\rho(\bm{e}_{\chi}) > 0$ and $\rho$ is $K$-invariant, by continuity there exists $\varepsilon \in (0, 1/2)$ such that $\rho(k a(y) \bm{e}_{\chi}) > 0$ for every $k \in K$ and $y \in [1-\varepsilon, 1+ \varepsilon]$. Since the above integral converges, for $\lambda_{\widetilde{X}}$-almost every $k a(y) \bm{e}_{\chi}$ with $k \in K$ and $y \in [1-\varepsilon, 1+ \varepsilon]$, we have
\[
\int_{L/\G_L} S_{\chi} \rho(k a(y) l\G) \, \dd \mu_{L/\G_L}(l) < + \infty.
\]
Fix such an element $k a(y) \bm{e}_{\chi}$. By construction of $\rho$, since $|f| \leq ({(k a(y))^{-1}} \, \rho)$,
\begin{align*}
\int_{L/\G_L} |S_{\chi} f | \, \dd \mu_{L/\G_L} &\leq \int_{L/\G_L} S_{\chi} ({(k a(y))^{-1}} \, \rho) \, \dd \mu_{L/\G_L} \\
&= \int_{L/\G_L} (S_{\chi} \rho)(k a(y) l \G) \, \dd \mu_{L/\G_L}(l)  < + \infty.
\end{align*}
Let us now prove the other implication. Fix $f \in B_{c}^{\infty}(\widetilde{X})$. 
Since  $\int_{\Omega} |S_{\chi} f|^2 \, \dd \mu_{\Omega} \leq \int_{\Omega} (S_{\chi} |f|)^2 \, \dd \mu_{\Omega}$, we may without loss assume that $f$ is non-negative. Let $\rho \in B_{c}^{\infty}(\widetilde{X})$ be a non-negative function with $\rho(\bm{e}_{\chi}) = 1$. Then, for every $l \in L$, we have $S_{\chi} \rho(l \G) = \sum_{\gamma \in \G/\G_L} \rho(l\gamma \bm{e}_{\chi}) \geq 1$ and thus
\[
\mu_{L/\G_L}(L/\G_L) \leq \int_{L/ \G_L} S_{\chi} \rho (l \G) \, \mu_{L/\G_L}(l \G_L) \, < \, + \infty,
\]
showing that $\G_L$ is a lattice in $L$ and hence,  by Theorem \ref{thm:L1}, that the parabolic $\Q$-subgroup $\bP$ is maximal. In particular, the measure description of $\lambda_{\widetilde{X}}$ in Equation \eqref{eq:Measure-On_Tilde-X} applies again. Using that $f$ is non-negative, by the same argument as before,
\[
\int_\Omega \left | S_{\chi} f \right |^{2}  \, \dd \mu_{\Omega} = \int_{\widetilde{X}} f(k a(y) \bm{e}_{\chi})  \, \Bigg (\int_{L/\G_L} S_{\chi} f(k a(y) l \G) \, \dd \mu_{L/\G_L}(l \G_L) \Bigg ) \, d \lambda_{\widetilde{X}}(k a(y) \bm{e}_{\chi}).
\]
By assumption, the assignment $(k K_L, a(y)) \mapsto \int_{L/\G_L} S_{\chi} f(k a(y) l \G) \, \dd \mu_{L/\G_L}(l \G_L)$ defines a function $K/K_L \times A \rightarrow \R$. Since the support of $f$ is compact, in order to conclude, it suffices to establish the continuity of this function, which is in fact a consequence of Lebesgue's dominated converges theorem.
\end{proof}

Let $W$ be the Weyl group of $\bG$ relative to $\bT$, which we identify with the abstract Weyl group of the root system $\Phi(\bG,\bT)$. By a slight abuse of notation, we also write $w \in \cN_{\bG}(\bT)(\Q)$ for a representative of a Weyl group element $w \in W$. We fix a $W$-invariant inner product $\langle \cdot, \cdot \rangle$ on $X^*(\bT) \otimes \R$ and recall that two distinct simple roots $\beta_1, \beta_2 \in \Delta$ are said to be \emph{neighbors} if $\langle \beta_1 , \beta_2 \rangle \neq 0$. We identify $X^*(\bT) \otimes \R$ with the dual $\mathfrak{a}^*$ of the Lie algebra $\mathfrak{a}$ of $T$. For each $w \in W$, we set $\bL_w = \bL \cap w \bL w^{-1}$.

\begin{proof} [Proof of Theorem \ref{thm:L2}]
So suppose that the Siegel transform $S_{\chi}$ maps $B_{c}^{\infty}(\widetilde{X})$ into $L^2(\Omega)$. Since $L^2(\Omega)$ is contained in $L^1(\Omega)$, Theorem \ref{thm:L1} implies that $\bP$ is a maximal parabolic $\Q$-subgroup. Hence there exists a unique simple root $\alpha \in \Delta$, such that $\bP = \bP_{\Delta \smallsetminus \{\alpha\}}$.  We need to show that, for every $w \in W$, we have $X^*(\bL_w^\circ)_{\Q} = \{1\}$. Let $w \in W$. We may assume that it is represented by an element $w \in \cN_{\bG}(\bT)(\Q)$. In particular, $[w \bm{e}_{\chi}]$ is a rational point of the flag variety $\bX = \bG / \bP$ and there exists $\lambda > 0$ such that $\lambda w \bm{e}_{\chi} \in \cP_{\chi}$. Its stabilizer in $L$ is given by $L_w$. Using Lemma~\ref{lem:Period} and the fact that $\G_L \, \lambda w \bm{e}_{\chi} \subset \cP_{\chi}$, for every non-negative function $f \in B_{c}^{\infty}(\widetilde{X})$,
\[
\int_{L / \G_L} \sum_{\gamma \in \G_L / (\G \cap L_w)} f(l \gamma \lambda w \bm{e}_{\chi}) \, \dd \mu_{L/\G_L} \leq \int_{L/\G_L} S_{\chi} f \, \dd \mu_{L/\G_L} < + \infty.
\]
Therefore,
$$
\Lambda_{w}(f) = \int_{L / \G_L} \sum_{\gamma \in \G_L / (\G \cap L_w)} f(l \gamma \lambda w \bm{e}_{\chi}) \, \dd \mu_{L/\G_L}
$$ 
defines a positive $L$-invariant linear functional on $B_{c}^{\infty}(\widetilde{X})$, inducing a unique $L$-invariant Radon measure $\lambda_{w}$ on $\widetilde{X}$, whose support is contained in the closure of the orbit $L \, \lambda w \bm{e}_{\chi}$. Therefore $L\, \lambda w \bm{e}_{\chi} = L/L_w$ carries an $L$-invariant Radon measure and since $L$ is unimodular, this implies that $L_w$ is also unimodular. Hence, applying again a standard folding/unfolding argument, there exists an $L$-invariant Radon measure $\mu_{L/L_w}$ on $L/L_w$ that can be normalized such that, for every non-negative $f \in B_{c}^{\infty}(\widetilde{X})$,
\begin{align*}
\Lambda_{w}(f) &= \int_{L/L_w} \int_{L_w/(\G \cap L_w)} f (l l' \lambda w \bm{e}_{\chi}) \, \dd \mu_{L_w/(\G \cap L_w)}(l' (\G \cap L_w)) \dd \mu_{L/L_w}(l L_w) \\
&= \mu_{L_w/(\G \cap L_w)}(L_w/(\G \cap L_w)) \int_{L/L_w} f (l \lambda w \bm{e}_{\chi}) \, \dd \mu_{L/L_w}(l L_w).
\end{align*}
It follows that $\G \cap L_w$ is a lattice in $L_w$. Thus, by a theorem of Borel and Harish-Chandra~\cite[Theorem 9.4]{BHC62}, we have $X^*(\bL_w^\circ)_{\Q} = \{1\}$, as required. 

Let us now show the last claim in Theorem~\ref{thm:L2}.
Denote the set of neighbors of $\alpha$ by $V(\alpha)$ and set $B(\alpha) = V(\alpha) \cup \{\alpha\}$. Let $s_{\alpha} \in W$ be the reflection across the hyperplane defined by $\alpha$. We claim that 
\begin{equation} \label{eq:Inclusion}
\bL_{s_\alpha} = \bL \cap s_{\alpha} \bL s_{\alpha}^{-1} \subset \bP_{\Delta \smallsetminus B(\alpha)}.
\end{equation}
Since $\bL_{s_\alpha}$ is normalized by $\bT$, its Lie algebra decomposes as a direct sum of a maximal toral subalgebra and root spaces. For every $\beta \in \Phi$, we let 
\[
\kg_{\beta} = \{Y \in \kg : \forall \, t \in T, \, \Ad(t) Y = \beta(t) Y \}
\]
and $\mathfrak{z} = \{Y \in \kg : \forall \, t \in T, \, \Ad(t) Y = Y \}$. Let $\langle \Delta \setminus B(\alpha) \rangle^-$ be the set of all roots that can be expressed as a negative integer linear combination of elements in $\Delta \setminus B(\alpha)$. Then the Lie algebra of $P_{\Delta \smallsetminus B(\alpha)}$ is
\[
\kp_{\Delta \smallsetminus B(\alpha)} = \mathfrak{z} \oplus \left ( \bigoplus_{\beta \in \Phi^+ \cup \langle \Delta \smallsetminus B(\alpha) \rangle^-} \kg_{\beta} \right ).
\]
Let us denote by $\Phi(L_{s_{\alpha}}, T)$ the set of roots arising in the adjoint representation of $T$ on the Lie algebra of $L_{s_\alpha}$. The inclusion~\eqref{eq:Inclusion} follows once we prove that every negative root 
\begin{equation} \label{eq:Root_Lambda}
\lambda = \sum_{\beta \in \Delta} n_{\beta} \beta \in \Phi(L_{s_{\alpha}}, T)    
\end{equation}
has $n_{\beta} = 0$ for all $\beta \in B(\alpha)$. Since $s_{\alpha}$ normalizes $L_{s_\alpha}$, we have $s_{\alpha}(\lambda) \in \Phi(L_{s_{\alpha}}, T)$. Moreover, since $L_{s_{\alpha}} \subset P_{\Delta \smallsetminus \{\alpha\}}$, we must have $n_{\alpha} = 0$, and hence the sum in \eqref{eq:Root_Lambda} is in fact a sum over $\Delta \smallsetminus \{\alpha\}$. Therefore, applying $s_{\alpha}$ to \eqref{eq:Root_Lambda} yields 
\[
s_{\alpha}(\lambda) = \sum_{\beta \in \Delta \smallsetminus \{\alpha\}} n_{\beta} s_{\alpha}(\beta) = \left( \sum_{\beta \in \Delta \smallsetminus \{\alpha\}} n_{\beta} \beta \right ) - \left ( \sum_{\beta \in V(\alpha)} \frac{2 \langle \alpha, \beta \rangle}{ \langle \alpha, \alpha \rangle} n_\beta \right) \alpha.
\]
For every $\beta \in V(\alpha)$, one has $\langle \alpha, \beta \rangle < 0$. Therefore, we have $n_{\beta} = 0$ for all $\beta \in B(\alpha)$, proving the inclusion~\eqref{eq:Inclusion}. Moreover, the $\Q$-rank of $\bL_{s_\alpha}$ is at least $\mathrm{rank}_{\Q} \, \bG-2$, since it contains the $\Q$-split torus $\bT \cap \ker(\chi) \cap \ker(s_{\alpha} \chi)$. On the other hand, the $\Z$-rank of $X^*(\bP_{\Delta \setminus B(\alpha)})_{\Q}$ equals $|B(\alpha)|$. If $|V(\alpha)| > 1$ (and hence $|B(\alpha)| > 2$), then $\bL_{s_\alpha}$ would admit a non-trivial $\Q$-character. Hence we must have $|V(\alpha)| \leq 1$, as required. The proof of Theorem~\ref{thm:L2} is complete.
\end{proof}

\section{Equidistribution of maximal compact subgroup orbits} \label{sec:Equi-compact}

In this section, we prove Theorem~\ref{thm:Effective}, which we will use in the next section, together with our integrability criterion (Theorem~\ref{thm:L1}), to derive the application (Theorem~\ref{thm:Critical}) to metric Diophantine approximation on rank-one flag varieties. The proof is inspired by the approach in \cite[Proposition~4.1]{Ouaggag23}, where Ouaggag proves a similar result in the special case where $G = \SO(n,1)$ is the special orthogonal group of signature $(n,1)$. 

Contrary to the convention used in the rest of this paper, this section is kept slightly more general. In particular, as in Theorem~\ref{thm:Effective}, we let $A = \{a(y) : y \in \R_+^\times\}$ denote an arbitrary one-parameter $\Ad$-diagonalizable subgroup of $G$, that is, $\Ad(A)$ is diagonal with respect to some basis of the Lie algebra $\kg$ of $G$. We let $K$ be a maximal compact subgroup and denote by $\mu_K$ its Haar probability measure. The parabolic subgroup $P \subset G$ associated to $A$ is given by
\[
P = \left \{ g \in G : \text{the limit $\lim_{y \rightarrow + \infty} a(y) \, g \, a(y)^{-1} \in G$ exists} \right \}.
\]
Then, $P = \cZ_G(A) \ltimes U$ is the semi-direct product of the centralizer of $A$ in $G$ and the unipotent radical $U$ of $P$, which is also the  stable horospherical subgroup with respect to $A$:
\[
U = \left \{ g \in G : \lim_{y \rightarrow + \infty} a(y) \, g \, a(y)^{-1} = \mathrm{Id} \right \}.
\]
The unstable horospherical subgroup $U^- \subset G$ with respect to $A$ is given by
\[
U^- = \left \{ g \in G : \lim_{y \rightarrow 0} a(y) \, g \, a(y)^{-1} = \mathrm{Id} \right \}.
\]
We remark that $G$ decomposes as $G = KP$. Indeed, by the Iwasawa decomposition of $G$  there exists a minimal parabolic subgroup $P_0'$ of $G$ such that $G = K P_0'$ and, by definition, $P$ contains a conjugate $g P_0' g^{-1}$ of $P_0'$. Express $g = k p_0'$ with $k \in K$ and $p_0' \in P_0'$. Then we have 
\begin{equation} \label{eq:G=KP}
K P \supseteq K (g P_0' g^{-1}) = K (k P_0' k^{-1}) = KP_0'k^{-1} = G.
\end{equation}
Let $\mu_{U^-}$ be a Haar measure on $U^-$ and let $d_{U^-}(\cdot, \cdot)$ be a right-invariant metric on $U^-$, induced from the right-invariant metric $d_G(\cdot, \cdot)$ on $G$. For every $r_0 > 0$, we let $B_{U^-}(r_0) \subset U^-$ denote the open ball around $0$ in $U^-$ with radius $r_0$.

Let us introduce Sobolev norms on the spaces $C_c^\infty(\Omega)$, $C_c^\infty(U^-)$, and $C^\infty(K)$. Each element $Z$ in the Lie algebra $\kg$ of $G$ defines a first order differential operator $\cD_Z$ on $C_c^\infty(\Omega)$ by 
\[
\forall \, \phi \in C_c^{\infty}(\Omega), \, \forall \, x \in \Omega, \quad \cD_Z \phi(x) = \frac{d}{dt}\Big |_{t = 0} \phi(\exp(t Z)x).
\]
Let $D$ be the dimension of $\kg$ and let $\cB = (Z_i)_{1 \leq i \leq D}$ be a basis of the real vector space $\kg$. Then each monomial 
\begin{equation} \label{eq:Differential_Operator}
\cD_Z = \cD_{Z_1}^{j_1} \circ \cdots \circ \cD_{Z_{D}}^{j_{D}}
\end{equation}
with $(j_1, \dots, j_{D}) \in \N^{D}$ defines a differential operator of degree $\deg(\cD_Z)= \sum_{i = 1}^D j_i$. For all $r \geq 1$ and $\phi \in C_c^\infty(\Omega)$, we define the {\it degree $r$ Sobolev norm} of $\phi$ by 
\begin{equation} \label{eq:Norm}
\cS_r(\phi) = \sum_{\deg(\cD) \leq r} \left \|  \cD \phi \right \|_{\infty},
\end{equation}
where $\cD$ ranges over all monomials of elements in $\cB$ of degree $\leq r$. Similarly, one can define degree $r$ Sobolev norms on $C^\infty(K)$ and $C_c^\infty(U^-)$, which, by abuse of notation, we also denote by $\cS_r$.

For the convenience of the reader, we record here the effective single and double equidistribution result for expanding horospherical translates, which is a consequence of an effective multiple equidistribution result of Shi \cite[Theorem~1.5]{Shi21}) and from which we derive Theorem~\ref{thm:Effective}. 

\begin{thm} 
\label{thm:Shi21} 
Suppose that the action of $G$ on $\Omega = G/\G$ has a spectral gap. Let $A$ and $U^-$ be as above and suppose that the projection of $a(e)$ to each simple factor of $G$ is not the identity element. 
Then there exists a constants $c' = c'(G,\Omega,A) > 0$ and an integer $r \geq 1$ such that the following holds. For every $r_0 > 0$ and compact subset $Q \subset \Omega$, there exists a constant $C' = C'(r_0,Q) > 0$ such that for all $f \in C_c^{\infty}(U^-)$ with $\supp(f) \subset B_{U^-}(r_0)$, $\phi \in C_c^\infty(\Omega)$, $x \in Q$, and $y > 1$, we have
\begin{equation} \label{ml:single-Eq}
\left | \, \int_{U^-} f(u^-) \phi \bigl(a(y) u^- x \bigr) \dd \mu_{U^-}(u^-) - \mu_{U^-} (f) \mu_{\Omega}(\phi) \, \right | \, \leq \, C' \, y^{-c'} \cS_r(f) \cS_r(\phi),
\end{equation} 
and for all $f \in C_c^{\infty}(U^-)$ with $\supp(f) \subset B_{U^-}(r_0)$, $\phi_1, \phi_2 \in C_c^\infty(\Omega)$, $x_1, x_2 \in Q$, and $y_2 \geq y_1 > 1$, we have
\begin{multline} \label{ml:Double-Eq}
\left | \, \int_{U^-} f(u^-) \phi_{1}\bigl(a(y_1) u^- x_1 \bigr) \phi_{2} \bigl(a(y_2)  u^- x_2 \bigr) \dd \mu_{U^-}(u^-) - \mu_{U^-} (f) \mu_{\Omega}(\phi_{1}) \mu_{\Omega}(\phi_{2}) \, \right | \\ \leq \, C' \, \min \{y_1, y_2 / y_1 \}^{-c'} \cS_r(f) \cS_r(\phi_1) \cS_r(\phi_2).
\end{multline} 
\end{thm}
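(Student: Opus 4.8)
The plan is to obtain both \eqref{ml:single-Eq} and \eqref{ml:Double-Eq} as the cases $k = 1$ and $k = 2$ of the effective multiple equidistribution theorem of Shi, \cite[Theorem~1.5]{Shi21}; the work is in checking that the hypotheses match and in reading off the error terms.

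First I would record the structural facts needed to apply \cite[Theorem~1.5]{Shi21}. Since $A = \{a(y)\}$ is $\Ad$-diagonalizable, $\kg$ decomposes into eigenspaces of $\Ad(a(e))$, and unwinding the definitions of $P$, $U$ and $U^{-}$ given above one sees that $\cZ_G(A)$, $U$ and $U^{-}$ are the connected subgroups whose Lie algebras are, respectively, the span of the eigenspaces with eigenvalue of modulus $1$, $<1$, and $>1$. In particular $U^{-}$ is exactly the expanding horospherical subgroup for the flow $y \mapsto a(y)$ as $y \to +\infty$. Moreover, because the projection of $a(e)$ to each simple factor of $G$ is non-trivial, its logarithm is a non-zero semisimple element of the corresponding simple ideal of $\kg$, hence $\Ad(a(y))$ acts non-trivially there with a pair of reciprocal eigenvalues $y^{\pm\lambda}$, one of modulus $>1$; thus $U^{-}$ is non-trivial in each simple factor. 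Together with the standing spectral-gap hypothesis on $L^{2}_{0}(\Omega)$ and the fact that $\G$ is a lattice (Borel--Harish-Chandra), these are exactly the hypotheses under which \cite[Theorem~1.5]{Shi21} applies.

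Next I would invoke that theorem in the following form: there are $c' = c'(G,\Omega,A) > 0$ and an integer $r = r(G,\Omega,A) \ge 1$ such that, for every $r_0 > 0$ and every compact $Q \subset \Omega$, there is $C' = C'(r_0, Q) > 0$ for which, whenever $k \ge 1$, $f \in C_c^{\infty}(U^{-})$ is supported in $B_{U^{-}}(r_0)$, $\phi_1, \dots, \phi_k \in C_c^{\infty}(\Omega)$, $x_1, \dots, x_k \in Q$, and $1 < y_1 \le y_2 \le \dots \le y_k$, one has
\[
\left| \int_{U^{-}} f(u) \prod_{i=1}^{k} \phi_i\bigl(a(y_i)\, u\, x_i\bigr)\, \dd\mu_{U^{-}}(u) - \mu_{U^{-}}(f) \prod_{i=1}^{k} \mu_{\Omega}(\phi_i) \right| \le C'\, \gamma^{-c'}\, \cS_r(f) \prod_{i=1}^{k} \cS_r(\phi_i),
\]
where $\gamma = \min\{\, y_1,\ y_2/y_1,\ \dots,\ y_k/y_{k-1}\,\}$ is the minimal consecutive gap (with the convention $y_0 = 1$). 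Taking $k = 1$ gives $\gamma = y_1$, which is \eqref{ml:single-Eq}; taking $k = 2$ gives $\gamma = \min\{y_1, y_2/y_1\}$, which is \eqref{ml:Double-Eq}. The constraints $y > 1$ and $y_2 \ge y_1 > 1$ in the statement are exactly what places the parameters in the range covered by \cite[Theorem~1.5]{Shi21} and makes $\gamma \ge 1$. For $y$ in a bounded range the left-hand side is trivially $\ll_{r_0} \cS_r(f)\cS_r(\phi)$, which is absorbed into $C'\, y^{-c'}\cS_r(f)\cS_r(\phi)$ after enlarging $C'$ (this is where the dependence of $C'$ on $r_0$ enters), so no further lower threshold on $y$ is needed; and that $C'$ depends only on $(r_0, Q)$ while $c'$ and $r$ depend only on $(G,\Omega,A)$ is part of the cited statement.

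The one point deserving care is that \cite[Theorem~1.5]{Shi21} be applied with \emph{distinct} base points $x_1, \dots, x_k$ running over a common compact set, rather than a single fixed base point. If the cited statement is phrased for one base point, the version with distinct $x_i$ follows by the same argument: Shi's proof combines the effective mixing rate on $\Omega$ with a thickening of the horospherical piece in the transverse directions, and both the mixing rate and the Sobolev estimates controlling the thickening are uniform over the compact set $Q$, so the finitely many shifts $x \mapsto x_i$ cost nothing. I expect this, together with matching the normalisations of $A$, of the Haar measures $\mu_{U^{-}}$ and $\mu_{\Omega}$, and of the Sobolev norms $\cS_r$ with those of \cite{Shi21}, and with confirming that the error in \cite[Theorem~1.5]{Shi21} is governed by the minimal gap $\gamma$ in precisely the displayed form, to be the only genuinely delicate part of the argument; everything else is a direct specialisation to $k = 1$ and $k = 2$.
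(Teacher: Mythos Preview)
Your proposal is correct and matches the paper's approach exactly: the paper does not give a separate proof of this theorem but simply records it as the $k=1$ and $k=2$ cases of Shi's effective multiple equidistribution result \cite[Theorem~1.5]{Shi21}, and your write-up supplies precisely the hypothesis-checking (the $\Ad$-diagonalizable structure, non-triviality in each simple factor, spectral gap) and the reading-off of the error term $\gamma = \min\{y_1, y_2/y_1\}$ that makes this citation explicit. Your remark about distinct base points $x_1, x_2$ is a reasonable caution, but as you note it is absorbed by uniformity over the compact set $Q$, which is already built into the dependence $C' = C'(r_0, Q)$.
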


\begin{proof} [Proof of Theorem~\ref{thm:Effective}]
Let us prove the effective double equidistribution statement in \eqref{eq:Double-Eq}. The proof of the effective single equidistribution statement in \eqref{eq:Single-Eq} follows along the same line of argument and we omit the details.

Let $\mathfrak{k}$ be the Lie algebra of $K$ and let $\mathfrak{k}_P$ be that of $K_P = K \cap P$. Choose a subspace $\mathfrak{s} \subset \mathfrak{k}$ such that $\mathfrak{k} = \mathfrak{s} \oplus \mathfrak{k}_P$. Let $V$ be a small neighborhood of the origin in $\mathfrak{g}$ such that the exponential map $\exp : \mathfrak{g} \rightarrow G$ restricts to a diffeomorphism $\exp |_V : V \rightarrow \exp(V)$ and consider the embedded submanifold $S = \exp(V \cap \mathfrak{s})$ of $\exp(V)$. In order to relate the double equidistribution of translated $K$-orbits to that of translated horospherical orbits, we first construct a local diffeomorphism from $S$ to $U^-$. 

Let $\mu_{K_P}$ be the Haar probability measure on $K_P$. The product map $S \times K_P  \rightarrow K$ restricts to a local diffeomorphism, giving a decomposition $k= s(k) k_P(k)$ with $s(k) \in S$ and $k_P(k) \in K_P$ for every $k \in K$ close to $\mathrm{Id}$. Moreover, there is a smooth measure $\nu$ on a neighborhood of $\mathrm{Id} \in S$ such that for all bounded integrable functions $f' : K \rightarrow \R$ supported in a sufficiently small neighborhood of $\mathrm{Id} \in K$,  
\[
\int_K f'(k) \, \dd \mu_K(k) = \int_{S} \int_{K_P} f'(s k_P) \, \dd \mu_S(s) d \mu_{K_P}(k_P).
\]
We recall that $\ku^-$ and $\mathfrak{p}$ denote the Lie algebras of $U^-$ and $P$ respectively and that the Lie algebra of $G$ decomposes as the direct sum $\mathfrak{g} = \ku^- \oplus \mathfrak{p}$. Thus every element $g \in G$ sufficiently close to the identity $\mathrm{Id} \in G$ can be uniquely decomposed as $g = u^-(g) p(g)$ with $u^-(g) \in U^-$ and $p(g) \in P$. In particular, this induces a coordinate map $h : g \mapsto u^-(g)$ that is defined on a small neighborhood of $\mathrm{Id} \in G$. Let us denote by $D_{\mathrm{Id}} h$ its derivative at $\mathrm{Id} \in G$ and observe that $\ker D_{\mathrm{Id}} h = \mathfrak{p}$. We claim that there exists a small neighborhood $V_S$ of the identity $\mathrm{Id} \in S$ such that the restriction $h|_{V_S} : V_S \rightarrow U^-$ defines a diffeomorphism onto a neighborhood of the identity in $U^-$. By \eqref{eq:G=KP}, we know that $G = KP$. Thus, the dimensions of $S$ and $U^-$ agree
\[
\dim S = \dim K / K_P = \dim G/P = \dim U^-.
\]
To show the claim, by the inverse function theorem, it suffices to show that the restriction $D_{\mathrm{Id}} h|_{\mathfrak{s}}$ is injective. This however follows from the fact that 
\[
\ker (D_{\mathrm{Id}} h|_{\mathfrak{s}}) = \mathfrak{s} \cap \mathfrak{p} = \{0\}.
\]

Let $r \geq 1$ be as in Theorem \ref{thm:Shi21}. There exist constants $c_1 > 0$ and $c_2 > 0$ such that the following holds. For every small $r_0 > 0$, there exists $N \in \N$ with $N \ll r_0^{-c_1}$, non-negative functions $\kappa_i \in C_c^{\infty}(K)$ with $1 \leq i \leq N$, all supported in $B_K(r_0)$ and satisfying  $\cS_r( \kappa_i) \ll r_0^{-c_2}$, and elements $k_i \in K$ with $1 \leq i \leq N$ such that we have a partition of unity: for every $k \in K$, we have $1= \sum_{i=1}^{N} \kappa_i (k k_i^{-1})$.

For all $f \in C^{\infty}(K)$, $\phi_1, \phi_2 \in C_c^\infty(\Omega)$ $x \in Q$, and $y_2 \geq y_1 \geq 0$, we define
\[
I_{f,\phi_1,\phi_2}(y_1,y_2,x_1,x_2) = \int_K f(k) \prod_{j=1}^2 \phi_{j} \bigl(a(y_j) k x_j \bigr) \dd \mu_K(k),
\]
where, for brevity, we have introduced the product symbol $\prod_{j=1}^2$. Let $r_0 > 0$ be small, to be fixed later. Using the direct sum decompositions
\[
\mathfrak{k} = \mathfrak{k}_P \oplus \mathfrak{s} \quad \text{and} \quad \mathfrak{g} = \mathfrak{p} \oplus \mathfrak{u}^{-},
\]
for every $k$ close the identity in $K$, we have 
\[
k = k_P(k) s(k) = k_P(k) p(s(k)) u^-(s(k)). 
\]
Observe that $K_P$ is the centralizer $\mathcal{Z}_K(A)$ in $K$ of $A$. Putting everything together and letting, for $i =1,2$, 
\[
g(k,y_i) = k_P(k) a(y_i) p(s(k)) a(y_i)^{-1},
\]
we have
\begin{align*}
&I_{f,\phi_1,\phi_2}(y_1,y_2,x_1,x_2) = \sum_{i = 1}^N \int_K \kappa_i(k) f(k k_i) \prod_{j=1}^2 \phi_{j} (a(y_j) k k_i x_j)  
\dd \mu_K(k) \\
&= \sum_{i = 1}^N \int_K \kappa_i(k) f(k k_i) \prod_{j=1}^2 \phi_{j}(g(k,y_j) a(y_j) u^-(s(k)) k_i x_j) \dd \mu_K(k).
\end{align*}
We recall that $P$ is the semi-direct product of the centralizer $\mathcal{Z}_G(A)$ in $G$ of $A$ and the contracting horospherical subgroup with respect to $A$:
\[
U = \left \{ g \in G : \lim_{y \rightarrow + \infty} a(y) \, g \, a(y)^{-1} = I \right \}.
\]
In particular, for every $y \geq 1$, elements $p \in P$ do not get expanded by the conjugation action of $a(y)$. By Lipschitz continuity of the coordinate maps $k \mapsto k_P(k)$, $k \mapsto p(s(k))$ on $B_K(r_0)$ with $r_0 > 0$ small enough, there exists a constant $C_1 > 0$, independent of $y_1$ and $y_2$, such that for every $k \in B_K(r_0)$, we have
\[
k_P(k), \, a(y_i) p(s(k)) a(y_i)^{-1} \in B_G(C_1 r_0).
\]
By the Lipschitz continuity of $\phi_1$ and $\phi_2$, we have
\begin{multline*}
\left| I_{f,\phi_1,\phi_2}(y_1,y_2,x_1,x_2) - \sum_{i = 1}^N \int_K \kappa_i(k) f(k k_i) \prod_{j=1}^2\phi_j(a(y_j) u^-(s(k)) k_i x_j) \dd \mu_K(k) \right | 
\\ \ll_r \, r_0 \, \cS_r(f) \cS_r(\phi_1) \cS_r(\phi_2).
\end{multline*}
Recall that there exists a neighborhood $V_S$ of the identity $1 \in S$ such that the map $h : V_S \rightarrow U^-$ given by sending $s \in S$ to $u^-(s)$ defines a diffeomorphism  onto a neighborhood of the identity $1 \in U^-$. Hence, denoting by $u^- \mapsto s(u^-)$ the local inverse of this diffeomorphism, there exists a smooth density $\rho_0$ defined in a neighborhood of $1 \in U^-$ such that for all sufficiently small $r_0 > 0$ and all $h \in C_c(S)$ supported in $B_S(r_0)$, we have
\begin{equation*}
\int_S h(s) \, \dd \mu_S(s) = \int_{U^-} h(s(u^-)) \rho_0(u^-) \, \dd \mu_{U^-}(u^-).
\end{equation*}
Using the local decomposition of the measure $\mu_K$ as a product of the measures $\mu_{K_P}$ and $\mu_S$, we have
\begin{align}
&\sum_{i = 1}^N \int_K \kappa_i(k) f(k k_i) \prod_{j=1}^2 \phi_{j} (a(y_j) u^-(s(k)) k_i x_j) 
\dd \mu_K(k) \nonumber \\
&=~\sum_{i=1}^N \int_{K_P} \int_S \kappa_i(k_P s) f(k_P s k_i) \prod_{j=1}^2 \phi_{j} (a(y_j) u^-(s) k_i x_j) 
\dd \mu_S(s) d\mu_{K_P}(k_P) \nonumber \\
&= \int_{K_P} \sum_{i=1}^N \left( \int_{U^-} f_{k_P,i}(u^-) \prod_{j=1}^2 \phi_j (a(y_j) u^- k_i x_j) 
\dd \mu_{U^-}(u^-) \right)d\mu_{K_P}(k_P), \label{integral}
\end{align}
where in the last line we abbreviated 
\[
f_{k_P,i}(u^-) := \kappa_i(k_P s(u^-)) f(k_P s(u^-) k_i) \rho_0(u^-).
\]
which is defined in a neighborhood $V$ of the identity in $U^-$ with smooth boundary and compact closure. 
By Theorem \ref{thm:Shi21}, applied to the functions $f_{k_P,i}$, $\phi_1$, $\phi_2$ and to the compact subset $KQ \subset \Omega$,
there exist constants $c' > 0$, $C' > 0$, independent of $f_{k_P,i}$, $\phi_1$, $\phi_2$, such that we have
\begin{multline*}
\left | \int_{U^-} f_{k_P,i}(u^-) \prod_{j=1}^2 \phi_j(a(y_1) u^- k_i x_j) \dd \mu_{U^-}(u^-) - \mu_{U^-} (f_{k_P,i}) \mu_\Omega(\phi_1) \mu_\Omega(\phi_2) \right | \\
\leq \, C' \, \min \{y_1,y_2/y_1 \}^{-c'} \cS_r(f_{k_P,i}) \cS_r(\phi_1) \cS_r(\phi_2). 
\end{multline*}
Observing that $\cS_r(f_{k_P,i}) \ll \cS_r(\kappa_i) \cS_r(f) \cS_r(\rho_0)$, that $N \leq r_0^{-c_1}$, that $\cS_r(\kappa_i) \ll r_0^{-c_2}$, that $\cS_r(\rho_0|_{B_{U^-}(r_0)}) \ll 1 $, and that 
\[
\int_{K_P} \sum_{i=1}^N \mu_{U^-} (f_{k_P,i}) d\mu_{K_P}(k_P) = \int_{K} \sum_{i=1}^N \kappa_i(k) f(k k_i) \, \dd \mu_{K}(k) = \mu_K(f),
\]
the integral \eqref{integral} is equal to
\[
\mu_K(f) \mu_\Omega(\phi_1) \mu_\Omega(\phi_2) + O\left( \min \{y_1,y_2/y_1 \}^{-c'} \, r_0^{-c_1 - c_2} \, \cS_r(f) \cS_r(\phi_1) \cS_r(\phi_2) \right).
\]
Hence, putting everything together, we have
\begin{multline*}
I_{f,\phi_1,\phi_2}(y_1,y_2,x_1,x_2) = \mu_K(f) \mu_\Omega(\phi_1) \mu_\Omega(\phi_2) \\
+ O\left( (\min \{y_1,y_2/y_1 \}^{-c'} \, r_0^{-c_1 - c_2} + r_0) \, \cS_r(f)\cS_r(\phi_1) \cS_r(\phi_2) \right).
\end{multline*}
Setting $c = \frac{c'}{1+c_1+c_2}$ and $r_0 = \min \{y_1,y_2/y_1 \}^{-c}$ completes the proof of Theorem~\ref{thm:Effective}.
\end{proof}

\section{Application to Diophantine approximation on flag varieties}

In this section, we prove Theorems \ref{thm:Critical}. Let us briefly recall the setting and explain, why the effective equidistribution Theorem~\ref{thm:Effective} applies in this case. Let $\bX$ be a generalized flag variety defined over $\Q$, obtained as the quotient $\bX = \bG / \bP$ of a connected simply-connected almost $\Q$-simple $\Q$-group $\bG$ by a proper parabolic $\Q$-subgroup $\bP$ with abelian unipotent radical. In particular, the $\Q$-rank of $\bX$ is $1$ and there exists a unique simply root $\alpha \in \Delta$ such that $\bP = \bP_{\Delta \smallsetminus \{\alpha\}}$. As before, we let $Y$ be the unique element in the Lie algebra of $\bT(\R)$ such that 
\[
\alpha(Y) = -1 \quad \text{and} \quad \beta(Y) = 0 \quad \text{for all } \beta \in \Delta \smallsetminus \{\alpha\}.
\]
We assumed that the element $\exp(Y)$ projects non-trivially to each simple factor of $G$. For all $y \in \R_+^\times$, we let $a(y) := \exp( \ln(y) Y)$. Since $\bG$ is assumed to be almost $\Q$-simple and $\Q$-isotropic (which follows from the assumption that $\bP$ is a proper parabolic $\Q$-subgroup of $\bG$), the group $G$ has no non-trivial compact factors (since any such factor would be contained in a $\Q$-anisotropic $\Q$-simple factor of $\bG$). Moreover, since $\bG$ is almost $\Q$-simple, the arithmetic subgroup $\G \subset \bG(\Q)$ is an irreducible lattice in $G$. Thus, by \cite{KS09}, the action of $G$ on $\Omega = G/\G$ has a spectral gap, and Theorem~\ref{thm:Effective} applies to our setting. 

Let $K$ be a maximal compact subgroup of $G$. We refer the reader to the introductory Section~\ref{sec:prel} for the specification of the height function $H_{\chi}$ on $\bX(\Q)$ associated to a dominant $\Q$-weight $\chi$, the $K$-invariant probability measure $\sigma_{X}$ and the $K$-invariant Riemannian distance $d(\cdot, \cdot)$ on $X$. Moreover, we recall that $\beta_{\chi} > 0$ denotes the Diophantine exponent of $X$ relative to $\chi$ \eqref{eq:DiophantineExpo}.

Let $c > 0$. Our goal is to determine the asymptotic behavior as $T \rightarrow + \infty$, for $\sigma_{X}$-almost every $x \in X$, of the counting function 
\[
\cN_{c,\beta_{\chi}}(x,T) = \# \{ v \in \bX(\Q) : d(x,v) < c \, H_{\chi}(v)^{-\beta_{\chi}}, \, 1 \leq H_{\chi}(v) < T\}.
\]
For simplicity, we will work with $c = 1$ and we write $\cN_{\beta_{\chi}}(x,T)$ for $\cN_{1,\beta_{\chi}}(x,T)$. 

\subsection{Diophantine approximation and counting lattice points} \label{sec:Reduction}
Let us first translate the problem of counting rational approximations of bounded height in $X$ to the problem of counting primitive lattice points in a certain family of growing sets in the cone $\widetilde{X}$ over $X$. We may assume that the lattice $\bV_{\chi}(\Z)$ is spanned over $\Z$ by an orthonormal basis of $V_{\chi}$ consisting of $\Q$-weight vectors, and that $\G$ is its stabilizer in $G$. 

We recall that $x_0 = [\bm{e}_{\chi}]$ stands for the line through the highest weight vector $\bm{e}_{\chi}$. For every $T \geq 1$, we define the set 
\begin{equation} \label{eq:Definition_E}
\cE_{\beta_{\chi}}(T) = \big \{ \bm{v} \in \widetilde{X} : d(x_0, [\bm{v}]) < \|\bm{v}\|^{-\beta_{\chi}}, 1 \leq \|\bm{v}\| < T \big \}.
\end{equation}
Fix a section $\ks : X  \rightarrow K$ of the orbital map $K \to X$ sending $k$ to $kx_0$. Given $x \in X$, we shall write $k_x = \ks(x)$.
As the following lemma shows, estimating the counting function $\cN_{\beta_{\chi}}(x,T)$ amounts to counting lattice points in the increasing family $\{\cE_T\}_{T \geq 1}.$ Let $[K \cap P : K \cap L] \in \{1,2\}$ be the index of $K \cap L$ in $K \cap P$.
\begin{lemma} \label{lem:Reduction}
For every $x \in X$ and $T \geq 1$, we have
\begin{equation} \label{eq:Reduction}
\cN_{\beta_{\chi}}(x,T) = [K \cap P : K \cap L]^{-1} \, \# \left (k_x^{-1} \cP_{\chi}  \cap \cE_{\beta_{\chi}}(T) \right ).
\end{equation}
\end{lemma}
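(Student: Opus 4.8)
The plan is to unwind both sides of \eqref{eq:Reduction} and match them directly. First I would recall that each rational point $v \in \bX(\Q)$ corresponds, under the embedding $\iota_\chi$, to a unique rational line $[\bm{v}]$ through a primitive vector $\bm{v} \in \cP_\chi$ (up to sign), and that by definition $H_\chi(v) = \|\bm{v}\|$. Thus the condition $1 \le H_\chi(v) < T$ translates to $1 \le \|\bm{v}\| < T$, and the approximation condition $d(x,v) < H_\chi(v)^{-\beta_\chi}$ translates to $d(x, [\bm{v}]) < \|\bm{v}\|^{-\beta_\chi}$. Using the $K$-invariance of the Riemannian distance $d(\cdot,\cdot)$ on $X$ and writing $x = k_x x_0$, we have $d(x,[\bm{v}]) = d(k_x x_0, [\bm{v}]) = d(x_0, k_x^{-1}[\bm{v}]) = d(x_0, [k_x^{-1}\bm{v}])$, and likewise $\|\bm{v}\| = \|k_x^{-1}\bm{v}\|$ since $k_x \in K$ acts by an isometry on $V_\chi$. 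Hence the defining conditions for membership in $\cN_{\beta_\chi}(x,T)$ are exactly the conditions defining membership of $k_x^{-1}\bm{v}$ in $\cE_{\beta_\chi}(T)$.

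Second, I would carefully address the counting multiplicity coming from the $\pm$ ambiguity and, more subtly, from the passage between rational \emph{points} of $\bX$ and \emph{vectors} in $\cP_\chi$. A rational point $v$ determines the line $[\bm{v}] \subset V_\chi$, which meets $\cP_\chi$ in exactly two points $\pm\bm{v}$ (the two primitive generators of the rank-one sublattice $[\bm{v}] \cap \bV_\chi(\Z)$). Both $\bm{v}$ and $-\bm{v}$ satisfy $\|{\pm\bm{v}}\| = H_\chi(v)$, and both satisfy (or fail) the distance condition simultaneously since $[\bm{v}] = [-\bm{v}]$. So naively each $v$ contributes $2$ to $\#(k_x^{-1}\cP_\chi \cap \cE_{\beta_\chi}(T))$. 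The factor $[K\cap P : K\cap L] \in \{1,2\}$ enters because $\bL = \mathrm{Stab}_{\bG}(\bm{e}_\chi)$ while $\bP = \mathrm{Stab}_{\bG}([\bm{e}_\chi])$: the quotient $P/L \cong (K\cap P)/(K\cap L)$ records whether $-\bm{e}_\chi$ lies in the $G$-orbit $\widetilde{X} = G\,\bm{e}_\chi$ (equivalently whether $-\mathrm{Id}$, or some group element inducing $\bm{v}\mapsto -\bm{v}$ on the highest weight line, lies in $P \setminus L$). If this index is $2$, then $\widetilde{X}$ contains only \emph{one} of the two primitive generators of each relevant rational line, so each $v$ contributes $1$, not $2$; the stated normalization $[K\cap P:K\cap L]^{-1}$ then makes both sides agree. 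I would make this precise by noting $\cP_\chi \cap [\bm{v}]$ has cardinality $2/[K\cap P:K\cap L]$ for every $v \in \bX(\Q)$ in the relevant orbit, or rather that the natural map $\cP_\chi \to \bX(\Q)$, $\bm{v}\mapsto [\bm{v}]$, is surjective onto the rational points of the $G$-orbit with fibers of constant size $2 \cdot [K\cap P:K\cap L]^{-1}$... actually more carefully: the map has fibers of size equal to the number of primitive integer generators of $[\bm{v}]$ that lie in $\widetilde X$, which is $2$ if $-\mathrm{Id}$-type elements are in $L$ and $1$ otherwise — i.e. $2/[K\cap P:K\cap L]$ under the convention here, but I would double-check against \cite{Pfitscher24} which direction the index goes. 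One should also confirm that every $v \in \bX(\Q)$ appearing in the sum actually lies in the $G(\R)$-orbit of $x_0$, i.e. in $X = \bX(\R)$; this is automatic since $\bX(\Q) \subset \bX(\R) = X$.

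Third, assembling: the set $\cN_{\beta_\chi}(x,T)$ is a set of rational \emph{points}, while $k_x^{-1}\cP_\chi \cap \cE_{\beta_\chi}(T)$ is a set of \emph{primitive vectors}; the bijection between conditions established in the first step, combined with the constant fiber size from the second step, gives
\[
\#\left(k_x^{-1}\cP_\chi \cap \cE_{\beta_\chi}(T)\right) = \frac{2}{[K\cap P : K\cap L]} \cdot \#\{v : \ldots\} \cdot \frac{1}{\text{(?)}},
\]
and I would reconcile the remaining factor of $2$ with the conventions in Section~\ref{sec:Reps} (recall $\|\bm{e}_\chi\|=1$, $\bm{e}_\chi \in \cP_\chi$) — most likely the height function is already defined so that $H_\chi(v) = \|\bm{v}\|$ for a chosen primitive $\bm{v}$, and the set $\cE_{\beta_\chi}(T)$ is $-\mathrm{Id}$-symmetric exactly when $[K\cap P:K\cap L] = 1$, which absorbs the extra $2$. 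I expect the main obstacle to be precisely this bookkeeping of multiplicities — matching the $\pm$ symmetry of $\cE_{\beta_\chi}(T) \subset \widetilde X$, the fiber size of $\cP_\chi \twoheadrightarrow \bX(\Q)$, and the role of the index $[K\cap P : K\cap L]$ — rather than any analytic content; the geometric translation of conditions via $K$-invariance is routine. I would likely cite \cite[Section~2]{Pfitscher24} for the compatibility of the height normalization and the precise statement that $\cP_\chi$ meets each relevant rational line in $2/[K\cap P:K\cap L]$ primitive vectors, and then the identity \eqref{eq:Reduction} follows by counting.
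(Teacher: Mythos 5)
Your approach is the same as the paper's (translate the defining inequalities via $K$-invariance, then account for the multiplicity with which primitive vectors in $\cP_\chi$ represent a given rational point), and you correctly identify the multiplicity bookkeeping as the only nontrivial point. However, you get the direction of that bookkeeping backwards, and your own reconciliation does not actually close. You write ``If this index is $2$, then $\widetilde{X}$ contains only \emph{one} of the two primitive generators of each relevant rational line, so each $v$ contributes $1$'' — but this is the opposite of what happens, and it is also inconsistent with the formula: if each $v$ contributed $1$, the right-hand side of \eqref{eq:Reduction} would be $\tfrac{1}{2}\cN_{\beta_\chi}(x,T)$, not $\cN_{\beta_\chi}(x,T)$. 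The correct direction: $K\cap P$ stabilizes the line $\R\bm{e}_\chi$ and, being contained in the unitary group $K$ with $\|\bm{e}_\chi\|=1$, acts on $\bm{e}_\chi$ by $\pm1$; so $[K\cap P : K\cap L]=2$ precisely when some $k\in K$ sends $\bm{e}_\chi$ to $-\bm{e}_\chi$, in which case $-\bm{e}_\chi\in K\bm{e}_\chi\subset\widetilde X$ and $\widetilde X$ is invariant under $\bm{v}\mapsto-\bm{v}$. Thus when the index is $2$, \emph{both} primitive generators $\pm\bm{v}$ of the rational line $[\bm{v}]$ lie in $\cP_\chi$; when it is $1$, only one does (if $-\bm{v}=g\bm{e}_\chi$ for some $g\in G$ then $-\bm{e}_\chi\in\widetilde X$ by translating back, and the sign must be picked up by the $K$-component of the decomposition of $P$, forcing $[K\cap P:K\cap L]=2$). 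So the fiber of $\cP_\chi\twoheadrightarrow\bX(\Q)$ over each $v$ has size exactly $[K\cap P:K\cap L]$ — not $2/[K\cap P:K\cap L]$ — and since $\|\cdot\|$ and $d(\cdot,\cdot)$ are insensitive to the sign, these representatives satisfy or violate the inequalities together, giving $\#(\cP_\chi\cap k_x\cE_{\beta_\chi}(T)) = [K\cap P:K\cap L]\,\cN_{\beta_\chi}(x,T)$, which is \eqref{eq:Reduction} after the substitution $\bm{v}\mapsto k_x\bm{v}$.

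The running $\SL_2$ example in Figure~\ref{fig:4} is a useful sanity check you should have reached for: there $[K\cap P:K\cap L]=[\{\pm\Id\}:\{\Id\}]=2$ and $\widetilde X=\R^2\smallsetminus\{0\}$ visibly contains both $\pm\bm{v}$, so the factor $\tfrac12$ compensates for the double count. You do flag your uncertainty (``I would double-check which direction the index goes''), which is honest, but as written the argument rests on the wrong fiber count, and your sentence claiming ``$\cE_{\beta_\chi}(T)$ is $-\mathrm{Id}$-symmetric exactly when $[K\cap P:K\cap L]=1$'' is likewise reversed: the $\pm$-symmetry of $\cE_{\beta_\chi}(T)\subset\widetilde X$ is inherited from that of $\widetilde X$, which holds exactly when the index is $2$.
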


\begin{proof}
It suffices to show that $\cN_{\beta_{\chi}}(x,T) = \# \left (\cP_{\chi}  \cap k_x \cE_{\beta_{\chi}}(T) \right )$. Using the $K$-invariance of the norm $\|\cdot \|$ and the distance $d ( \cdot, \cdot)$ on $X$, we have
$$
k_x \cE_{\beta_{\chi}}(T) = \left \{ \bm{v} \in \widetilde{X} : d(x, [\bm{v}]) < \|\bm{v}\|^{-\beta_{\chi}}, 1 \leq \|\bm{v}\| < T \right \}.
$$
By the definition of the height function $H_{\chi}$, a rational point $v \in \bX(\Q)$ satisfies $d(x, v) < H_\chi(v)^{-\beta_{\chi}}$ and $1 \leq H_\chi(v) < T$ if and only if any of the primitive vectors $\bm{v} \in \cP_{\chi}$ representing $v$ satisfy $d(x, [\bm{v}]) < \|\bm{v}\|^{-\beta_{\chi}}$ and $1 \leq \|\bm{v}\| < T$. This finishes the proof of the lemma.
\end{proof}

\subsection{The geometry of $\cE_{\beta_{\chi}}(T)$ and the diagonal action of $A$} \label{sec:Approx_E_T}

In this section, we observe that the set $\cE_{\beta_{\chi}}(T)$ can be approximated from inside and from outside by sets that admit a convenient decomposition under the action of the one-parameter diagonal subgroup $A = \{a(y) : y \in \R_+^\times \}\subset G$. 

Let us now define these sets that approximate $\cE_{\beta_{\chi}}(T)$. We recall from Section \ref{sec:Distance} that the map $\ku^- \rightarrow X$ sending $u \mapsto \exp(u) x_0$ restricts to a diffeomorphism from a neighborhood of $1 \in \ku^-$ to a neighborhood of $x_0 \in X$. In particular, any $\bm{v} \in \widetilde{X}$, such that $[\bm{v}]$ is close to $x_0$, defines an element $u_{\bm{v}}^-$ in the Lie algebra $\ku^-$ by $[\bm{v}] = \exp(u_{\bm{v}}^-) x_0$. The adjoint action of $a(y) \in A$ on $\ku^- = T_{x_0} X$ acts by scalar multiplication: for all $y \in \R_+^\times$, $\Ad(a(y)) u^- = y \, u^-$. Observe that 
\[
[a(y) \bm{v}] = a(y) [\bm{v}] =  a(y) \exp(u_{\bm{v}}^-) a(y) a(y)^{-1} x_0 = \exp(\Ad(a(y)) u_{\bm{v}}^-) x_0.
\]
But we also have $[a(y) \bm{v}] = \exp(u_{a(y) \bm{v}}^- ) x_0$. By uniqueness, this gives the relation
\begin{equation} \label{eq:LieAlgebraDiagonal}
u_{a(y) \bm{v}}^- = y \, u_{\bm{v}}^-.
\end{equation}
Moreover, by the distance estimate \eqref{eq:Distance_Estimate}, there exists a constant $C_0 > 0$ such that 
\[
\left | d(x_0, [\bm{v}]) - \|u_{\bm{v}}^-\|_{\ku^-} \right | \leq C_0 \, \|u_{\bm{v}}^-\|_{\ku^-}^2.
\]
Let $\pi^+ : V_\chi \rightarrow V_\chi$ be the orthogonal projection onto $\R\bm{e}_{\chi}$ and we abbreviate $\pi^+(\bm{v})$ by $\bm{v}^+$. For every $T \geq 1$ and $c > 0$ close to $1$, we will work with the sets
\[
\cE_{T, c}^+ = \{\bm{v} \in \widetilde{X} : \|u_{\bm{v}}^-\|_{\ku^-} < c \, \|\bm{v}^+\|^{-\beta_{\chi}}, 1 \leq \|\bm{v}^+\| < c \, T \}.
\] 
By enlarging $C_0$ if necessary, we can assume that $\|\bm{v}^+\| \geq C_0^{-1} \|\bm{v}\|$ as soon as $d(x_0, [\bm{v}]) < 1$. For every integer $\ell \geq 1$, we let 
\[
Q_{\ell} = \{\bm{v} \in \widetilde{X} : \|\bm{v}\| \leq C_0 \, \ell \}
\]
and we define
\[
\widehat{c}_{\ell} = \left ( 1 + C_0 \, \ell^{-\beta_{\chi}}  \right )^{-(1+\beta_{\chi})} \in (0,1).
\]
In particular, we have $\widehat{c}_{\ell} \nearrow 1$ as $\ell \rightarrow + \infty$.

The sets $\cE_{T,c}^+$ have the following nice properties. For every $c > 0$, let
\begin{equation} \label{eq:F_c_New}
\cF_c = \{\bm{v} \in \widetilde{X} : \|u_{\bm{v}}^-\|_{\ku^-} < c  \|\bm{v}^+\|^{-\beta_{\chi}}, 1 \leq \|\bm{v}^+\| < e \}.
\end{equation}

\begin{lemma} \label{lem:Sandwich}
\begin{enumerate}
\item (Approximation) For all large enough $\ell \geq 1$ and $T \geq 1$,
\begin{equation} \label{eq:lem:Sandwich}
    \cE_{T, \widehat{c}_{\ell}}^+ \smallsetminus Q_{2 \ell} \, \subseteq \, \cE_{\beta_{\chi}}(T) \smallsetminus Q_{\ell} \, \subseteq \, \cE_{T, \widehat{c}_{\ell}^{-1}}^+.
\end{equation}
\item (Decomposition) For every $c > 0$ and $T \geq 1$ such that $c T = e^N$ for some $N \in \N$, we have
\begin{equation} \label{eq:decomposition}
    \cE_{T,c}^+ = \bigsqcup_{i=0}^{N-1} a(y_i)^{-1} \, \cF_c, \quad \quad \text{with $y_i = e^{\beta_{\chi} i}$ for $j \in \N$. }
\end{equation}
\end{enumerate}
\end{lemma}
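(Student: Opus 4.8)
The plan is to prove the two parts in turn: part (2) is a short computation recording the action of $A$ on the data defining the sets, and part (1) is an elementary but careful comparison of the ``spherical'' quantities $(d(x_0,[\bm{v}]),\|\bm{v}\|)$ with the ``linearised'' ones $(\|u_{\bm{v}}^-\|_{\ku^-},\|\bm{v}^+\|)$. For the decomposition \eqref{eq:decomposition} I would first note that $a(y)\in T$ is self-adjoint for the chosen inner product on $V_\chi$, hence commutes with the orthogonal projection $\pi^+$ onto $\R\bm{e}_\chi$; combining this with \eqref{eq:LieAlgebraDiagonal} and the fact that $a(y)$ acts on $\bm{e}_\chi$ by the scalar $\chi(a(y)) = y^{-1/\beta_\chi}$ (this is precisely the normalisation of the Diophantine exponent; cf. \cite{deSaxce20,Pfitscher24}), one gets, for $y_i = e^{\beta_\chi i}$, that $\|(a(y_i)\bm{v})^+\| = \chi(a(y_i))\,\|\bm{v}^+\| = e^{-i}\|\bm{v}^+\|$ while $\|u_{a(y_i)\bm{v}}^-\|_{\ku^-} = e^{\beta_\chi i}\|u_{\bm{v}}^-\|_{\ku^-}$. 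Substituting into the condition $a(y_i)\bm{v}\in\cF_c$ (see \eqref{eq:F_c_New}), the factors $e^{\beta_\chi i}$ cancel in the inequality $\|u_{\bm{v}}^-\|_{\ku^-} < c\|\bm{v}^+\|^{-\beta_\chi}$, which therefore becomes independent of $i$, whereas the two-sided bound $1\le\|(a(y_i)\bm{v})^+\|<e$ turns into $e^i\le\|\bm{v}^+\|<e^{i+1}$. Thus $a(y_i)^{-1}\cF_c = \{\bm{v}\in\widetilde{X} : \|u_{\bm{v}}^-\|_{\ku^-} < c\|\bm{v}^+\|^{-\beta_\chi},\ e^i\le\|\bm{v}^+\|<e^{i+1}\}$, and since the intervals $[e^i,e^{i+1})$ for $0\le i\le N-1$ are pairwise disjoint with union $[1,e^N)=[1,cT)$, the disjoint union over $i$ is exactly $\cE_{T,c}^+$.

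For the sandwich \eqref{eq:lem:Sandwich} the inputs are the distance estimate \eqref{eq:Distance_Estimate}, the trivial bound $\|\bm{v}^+\|\le\|\bm{v}\|$, and the reverse comparison $\|\bm{v}\|\le(1+O(d(x_0,[\bm{v}])^2))\|\bm{v}^+\|$, valid once $[\bm{v}]$ is near $x_0$. Since $\|\bm{v}^+\|\ge C_0^{-1}\|\bm{v}\|$ on $\{d(x_0,[\bm{v}])<1\}$, for $\bm{v}\notin Q_\ell$ one has $\|\bm{v}^+\|>\ell$, so all the relevant error terms $\|u_{\bm{v}}^-\|_{\ku^-}$, $d(x_0,[\bm{v}])$ and $\|\bm{v}\|/\|\bm{v}^+\|-1$ are $\ll\ell^{-\beta_\chi}$ once $\ell$ is large, uniformly in $T$. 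I would then chase definitions. Given $\bm{v}\in\cE_{\beta_\chi}(T)\smallsetminus Q_\ell$, the bound $d(x_0,[\bm{v}])<\|\bm{v}\|^{-\beta_\chi}\le\|\bm{v}^+\|^{-\beta_\chi}$ together with \eqref{eq:Distance_Estimate} (which on $\widetilde{X}\smallsetminus Q_\ell$ reads $\|u_{\bm{v}}^-\|_{\ku^-}\le(1+O(\ell^{-\beta_\chi}))\,d(x_0,[\bm{v}])$) gives $\|u_{\bm{v}}^-\|_{\ku^-}<\widehat{c}_\ell^{-1}\|\bm{v}^+\|^{-\beta_\chi}$, while $\|\bm{v}^+\|\le\|\bm{v}\|<T<\widehat{c}_\ell^{-1}T$; the exponent $1+\beta_\chi$ in $\widehat{c}_\ell$ is exactly what absorbs one factor $1+O(\ell^{-\beta_\chi})$ of the distortion $d$ versus $\|u_{\bm{v}}^-\|_{\ku^-}$ together with $\beta_\chi$ factors of the distortion $\|\bm{v}\|$ versus $\|\bm{v}^+\|$ raised to the power $-\beta_\chi$. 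Running the same inequalities in the opposite direction, for $\bm{v}\in\cE_{T,\widehat{c}_\ell}^+\smallsetminus Q_{2\ell}$ one obtains $d(x_0,[\bm{v}])<\|\bm{v}\|^{-\beta_\chi}$, $\|\bm{v}\|>C_0\ell$, and $\|\bm{v}\|\le(1+O(d^2))\|\bm{v}^+\|<(1+O(d^2))\widehat{c}_\ell T<T$, so $\bm{v}\in\cE_{\beta_\chi}(T)\smallsetminus Q_\ell$; the mismatch between $Q_{2\ell}$ and $Q_\ell$ supplies exactly the slack needed to handle $\|\bm{v}^+\|$ versus $\|\bm{v}\|$ in the height constraint $\|\bm{v}\|<T$.

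The only delicate point, and the main obstacle, is the constant bookkeeping in part (1): one has to check that the multiplicative distortions are genuinely dominated by $\widehat{c}_\ell=(1+C_0\ell^{-\beta_\chi})^{-(1+\beta_\chi)}$. This works because, away from $Q_\ell$, the relative error in \eqref{eq:Distance_Estimate} is $\le C_0\|u_{\bm{v}}^-\|_{\ku^-}=O(\ell^{-\beta_\chi})$, of the right order to be absorbed into $\widehat{c}_\ell$, while the $\|\bm{v}\|$--$\|\bm{v}^+\|$ distortion is of smaller order $\ell^{-2\beta_\chi}$; it is also the reason the statement is claimed only for $\ell$ large and with the buffer $Q_{2\ell}$ on the inner side. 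Part (2) presents no such difficulty, the three scaling relations above being exact.
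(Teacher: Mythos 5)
Your proof is correct and follows essentially the same route as the paper: part (2) is the same exact computation with the scaling relations $\|(a(y_i)\bm{v})^+\| = e^{-i}\|\bm{v}^+\|$ and $\|u_{a(y_i)\bm{v}}^-\|_{\ku^-} = e^{\beta_\chi i}\|u_{\bm{v}}^-\|_{\ku^-}$, and part (1) is the same chain of inequalities combining \eqref{eq:Distance_Estimate} with the comparison $\|\bm{v}\|/\|\bm{v}^+\| = 1 + O(d(x_0,[\bm{v}])^2)$, with $\widehat{c}_\ell$ absorbing the multiplicative distortions once $\bm{v}$ is outside $Q_\ell$ or $Q_{2\ell}$. (The paper writes out the harder inclusion $\cE_{T,\widehat{c}_\ell}^+\smallsetminus Q_{2\ell}\subseteq\cE_{\beta_\chi}(T)\smallsetminus Q_\ell$ in detail and leaves the other as ``similar''; you do the opposite, but the bookkeeping is the same.)
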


\begin{proof}
Let $\bm{v} \in \cE_{ T, \widehat{c}_{\ell}}^+ \smallsetminus Q_{2 \ell}$. Let us first prove that for all sufficiently large $\ell$ and $T$, we have
\[  
d(x_0, [\bm{v}]) < \|\bm{v}\|^{-\beta_{\chi}}, \, C_0 \, \ell < \| \bm{v}\| < T.
\]
Let $\bm{v}^{\perp} = \bm{v} - \bm{v}^+$. Observe that, for large enough $\ell$, we have 
\[
\frac{\| \bm{v}^{\perp} \|}{\|\bm{v}^+ \|} \asymp d(x_0, [\bm{v}]) < \|\bm{v}\|^{-\beta_{\chi}}.
\]
Therefore, we have
$$
\frac{\|\bm{v} \|^2}{\|\bm{v}^+ \|^2} = 1 + \frac{\| \bm{v}^{\perp} \|^2}{\|\bm{v}^+ \|^2} \leq 1 + C_0 \, \|\bm{v}^+ \|^{-2\beta_{\chi}},
$$
and hence
$$
\frac{\|\bm{v} \|^{\beta_{\chi}}}{\|\bm{v}^+ \|^{\beta_{\chi}}} \leq \left ( 1 + C_0 \, \|\bm{v}^+ \|^{-2\beta_{\chi}} \right )^{\beta_{\chi} / 2}.
$$
Since $\bm{v}$ does not lie in $Q_{2\ell}$, we have $\| \bm{v}^+ \| \geq C_0^{-1} \|\bm{v}\| \geq 2 \ell$. Thus, using the definition of $\widehat{c}_{\ell}$, we have (by enlarging $C_0$ where necessary)
\begin{align*}
d(x_0, [\bm{v}]) &\leq \|u_{\bm{v}}^-\|_{\ku^-}  \left ( 1 + C_0 \, \|u_{\bm{v}}^-\|_{\ku^-} \right ) \\
&\leq \widehat{c}_{\ell} \|\bm{v}^+ \|^{-\beta_{\chi}} \left ( 1 + C_0 \, \|\bm{v}^+ \|^{-\beta_{\chi}} \right ) \\
&= \|\bm{v}\|^{-\beta_{\chi}} \left ( \widehat{c}_{\ell} \frac{\|\bm{v} \|^{\beta_{\chi}}}{\|\bm{v}^+ \|^{\beta_{\chi}}}(1 + C_0 \, \|\bm{v}^+ \|^{-\beta_{\chi}}) \right ) \\
&\leq \|\bm{v}\|^{-\beta_{\chi}} \left ( \widehat{c}_{\ell} \left ( 1 + C_0 \, \ell^{-2\beta_{\chi}} \right )^{\beta_{\chi} / 2} (1 + C_0 \, \ell^{-\beta_{\chi}}) \right ) \\
&\leq \|\bm{v}\|^{-\beta_{\chi}}
\end{align*}
Since $\bm{v}$ does not lie in $Q_{2\ell}$, it does, in particular, not lie in $Q_{\ell}$. Moreover, we have $\|\bm{v}\| = \|\bm{v}^+\| \, \frac{\|\bm{v}\|}{\|\bm{v}^+\|} \leq  \widehat{c}_{\ell} \frac{\|\bm{v}\|}{\|\bm{v}^+\|} \, T < T$.  
This shows the left inclusion in Equation \eqref{eq:lem:Sandwich}. The other inclusion is proved similarly.  

As for the last claim, we note that $(a(y) \bm{v})^+ = y^{-\frac{1}{\beta_{\chi}}}\bm{v}^+$. Then, using \eqref{eq:LieAlgebraDiagonal} and observing that
$$
a(y_i)^{-1} \, \cF_c = \{\bm{v} \in \widetilde{X} : \|u_{\bm{v}}^-\|_{\ku^-} < c \, \|\bm{v}^+\|^{-\beta_{\chi}}, e^i \leq \|\bm{v}^+\| < e^{i+1} \}
$$
yields the desired decomposition. 
%\begin{comment} 
%%%%%% PROOF OF THE OTHER INEQUALITY
%Now, let $\bm{v} \in \cE_{\beta_{\chi}}(T) \smallsetminus Q_2$. We need to show that 
%$$
%|u_{p\bm{v}}^-| < c_{\ell}^{-1} \cdot \|(p\bm{v})^+\|^{-\beta_{\chi}}, \, 1 \leq \|(p\bm{v})^+\| < c_{\ell}^{-1} T.
%$$
%We have
%\begin{align*}
%|u_{p\bm{v}}^-| &\leq d(x_0, p[\bm{v}]) \left ( 1 + C_0 \cdot d(x_0, p[\bm{v}]) \right ) \\
%&\leq (1 + C_0 \delta_2) d(x_0, [\bm{v}]) \left ( 1 + C_0 \cdot d(x_0, [\bm{v}]) \right ) \\
%&\leq (1 + C_0 \delta_2) \|\bm{v} \|^{-\beta_{\chi}} \left ( 1 + C_0 \cdot \|\bm{v} \|^{-\beta_{\chi}} \right ) \\
%&\leq (1 + C_0 \delta_2)^{1+\beta_{\chi}} \|p \bm{v} \|^{-\beta_{\chi}} \left ( 1 + C_0 \cdot \|\bm{v} \|^{-\beta_{\chi}} \right ) \\
%&\leq (1 + C_0 \delta_2)^{1+\beta_{\chi}} \|(p \bm{v})^+ \|^{-\beta_{\chi}} \left ( 1 + C_0 \cdot \|\bm{v} \|^{-\beta_{\chi}} \right ) \\
%&\leq c_{\ell}^{-1} \cdot \|(p\bm{v})^+\|^{-\beta_{\chi}}.
%\end{align*}
%Moreover, $\|(p\bm{v})^+\| \leq (1 + C_0 \delta_2) \|\bm{v}\| < c_{\ell}^{-1} T$, and when $2$ is large
%$$
%\|(p \bm{v})^+\| \geq C_0^{-1} \| p \bm{v}\| \geq C_0^{-1} (1 + C_0 \delta_2)^{-1} \|\bm{v}\| \geq 1.
%$$
%This shows the second inclusion. %By assumption, $\|\bm{v}\| \geq $
\end{proof}

Intersecting \eqref{eq:lem:Sandwich} with the discrete set $k_x^{-1} \cP_{\chi}$, we get for all large enough $\ell \geq 1$ and $T \geq 1$,
\[
\# \left ( k_x^{-1} \cP_{\chi}  \cap \cE_{T, \widehat{c}_{\ell}}^+ \smallsetminus Q_{2 \ell} \right )\, \leq \, \# \bigg ( k_x^{-1} \cP_{\chi}  \cap  \cE_{\beta_{\chi}}(T) \smallsetminus Q_{\ell} \bigg ) \, \leq \, \# \left ( k_x^{-1} \cP_{\chi}  \cap  \cE_{T, \widehat{c}_{\ell}^{-1}}^+ \right ).
\]
Moreover, by \cite[Theorem~C]{Pfitscher24}, where we counted rational points on $X$ of bounded height, and since $Q_\ell$ is $K$-invariant, we have, as $\ell \rightarrow + \infty$, 
\[
\# \left ( k_x^{-1} \cP_{\chi}  \cap Q_\ell \right ) = \# \left ( \cP_{\chi}  \cap Q_\ell \right ) = 2 \, \# \{v \in \bX(\Q) : H_{\chi}(v) \leq C_0 \ell \} \, \asymp \, \ell^{\beta_\chi d}.
\]
Therefore, we get, by enlarging $C_0$ if necessary,
\begin{equation} \label{eq:Sandwich_lem:Reduction}
\# \left ( k_x^{-1} \cP_{\chi}  \cap \cE_{T, \widehat{c}_{\ell}}^+ \right ) - C_0 \ell^{\beta_{\chi} d} \leq \# \left ( k_x^{-1} \cP_{\chi}  \cap \cE_{\beta_{\chi}}(T) \right ) \leq \# \left ( k_x^{-1} \cP_{\chi}  \cap \cE_{T, \widehat{c}_{\ell}^{-1}}^+ \right ) + C_0 \ell^{\beta_{\chi} d}.
\end{equation}
Using the decomposition of $\cE_{T,c}^+$ as in \eqref{eq:decomposition} with $T = \frac{1}{c} e^N$ for every integer  $N \geq 1$, we have
\begin{align} \label{eq:Method_Birkhoff}
\# (k_x^{-1} \cP_{\chi} \cap \cE_{T,c}^+) &= \# ( \cP_{\chi} \cap k_x \cE_{T,c}^+) \notag \\
&= \sum_{i=0}^{N-1} S_{\chi} \mathbbm{1}_{k_x a(y_i)^{-1}\cF_c} ( \G) = \sum_{i=0}^{N-1} S_{\chi} \mathbbm{1}_{\cF_c} (a(y_i) k_x^{-1} \G). 
\end{align} 
Plugging this back into \eqref{eq:Sandwich_lem:Reduction}, we get the lower and upper bounds, for every $T' \geq 1$ and large enough $\ell$,
\begin{align} \label{eq:Sandwich_lem:Reduction_Birkhoof}
\sum_{i=0}^{\lfloor T' + \ln \widehat{c}_\ell \rfloor -1} S_{\chi} \mathbbm{1}_{\cF_{c_{\ell}}} (a(y_i) k_x^{-1} \G) - C_0 \, \ell^{\beta_{\chi} d} &\leq \# \left ( k_x^{-1} \cP_{\chi}  \cap \cE_{\beta_\chi}(e^{T'}) \right ) \notag \\ 
&\leq \sum_{i=0}^{\lceil T' - \ln \widehat{c}_{\ell} \rceil - 1} S_{\chi} \mathbbm{1}_{\cF_{\widehat{c}_{\ell}^{-1}}} (a(y_i) k_x^{-1} \G) + C_0 \, \ell^{\beta_{\chi} d}.
\end{align}
The proof of Theorem \ref{thm:Critical} consists of effectively estimating the left- and right-hand sides of \eqref{eq:Sandwich_lem:Reduction_Birkhoof}. To this end, we will now develop the necessary tools and ingredients for these estimates.

\subsection{A uniform upper bound} \label{sec:Approximation}

A crucial input in the proof of Theorem~\ref{thm:Critical} is a uniform upper bound for the $K$-average of $S_{\chi} \mathbbm{1}_{\cF_c}(a(y) k \G)$ for all $y \geq 1$. Using Theorem~\ref{thm:L1}, we fix $\varepsilon > 0$ such that the Siegel transform $S_\chi$ maps $B_{c}^{\infty}(\widetilde{X})$ into $L^{1 + \varepsilon}(\Omega)$. Let $d_P(\cdot, \cdot)$ be the distance on $P$ induced from the right-invariant Riemannian metric $d_G(\cdot, \cdot)$ on $G$.

\begin{lemma} \label{lem:Replacement_for_alpha_s}
Let $s \in [1, 1 + \varepsilon)$. For every $c > 0$ and $g_0 \in G$, we have
\[ 
\sup_{y \geq 1} \int_{K} \big | S_{\chi} \mathbbm{1}_{\cF_c} ( a(y) k g_0 \G) \big|^s \, \dd \mu_K(k) < + \infty. 
\]
The upper bound is uniform as $g_0$ and $c$ vary in compact sets.
\end{lemma}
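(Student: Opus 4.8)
The plan is to compare the $K$-average of the $s$-th power of the translated Siegel transform with the $L^{1+\varepsilon}(\Omega)$-norm of a Siegel transform, using Theorem~\ref{thm:L1}. Throughout, $\bP$ is maximal in the present setting, so that theorem applies, and we let $\varepsilon>0$ be such that $S_\chi$ maps $B_c^\infty(\widetilde X)$ into $L^{1+\varepsilon}(\Omega)$. First observe that $\cF_c$ is relatively compact in $\widetilde X$, uniformly for $c$ in a compact subset of the range where $\cF_c$ is defined; hence $\mathbbm 1_{\cF_c}\in B_c^\infty(\widetilde X)$ and $S_\chi\mathbbm 1_{\cF_c}\in L^{1+\varepsilon}(\Omega)\subseteq L^s(\Omega)$ (as $\mu_\Omega$ is a probability measure and $s<1+\varepsilon$). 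For small $\eta>0$, let $\cF_c^{(\eta)}\supseteq\cF_c$ be obtained by relaxing both defining inequalities of $\cF_c$ by a factor $1+\eta$ and widening the admissible range of $\|\bm v^+\|$ to $[e^{-\eta},e^{1+\eta})$; it is again relatively compact, so $\|S_\chi\mathbbm 1_{\cF_c^{(\eta)}}\|_{L^s(\Omega)}<+\infty$, uniformly for $c$ in the given compact set.

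The next step is to thicken the point $a(y)kg_0\G$ along the parabolic $P^-$ opposite to $P$. Since the inequalities defining $\cF_c$ are stable under small perturbations, one can fix a neighbourhood $\cV$ of the identity in $P^-$ such that $v\,\cF_c\subseteq\cF_c^{(\eta)}$ for every $v\in\cV$ (uniformly for $c$ in a compact set); directly from the definition of $S_\chi$ this gives the pointwise bound $S_\chi\mathbbm 1_{\cF_c}(g\G)\le S_\chi\mathbbm 1_{\cF_c^{(\eta)}}(v g\G)$ for all $v\in\cV$ and $g\in G$. Taking $g=a(y)kg_0$, raising to the power $s$, averaging over $v\in\cV$ with respect to a fixed Haar measure $\mu_{P^-}$ on $P^-$, and integrating over $k\in K$, it then suffices to bound
\[
\frac{1}{\mu_{P^-}(\cV)}\int_K\int_{\cV}S_\chi\mathbbm 1_{\cF_c^{(\eta)}}\big(v\,a(y)k g_0\G\big)^{s}\,\dd\mu_{P^-}(v)\,\dd\mu_K(k)
\]
uniformly in $y\ge1$ and in $g_0$ in a compact set. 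The crucial feature is that $P^-=\cZ_G(A)\,U^-$ and $\Ad(a(y)^{-1})$ \emph{contracts} $U^-$ for $y\ge1$, so that $a(y)^{-1}\cV\cV^{-1}a(y)$ stays inside a fixed compact neighbourhood of the identity for all $y\ge 1$.

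Finally, since $\dim P^-+\dim K=\dim G+\dim K_P\ge\dim G$, the map $\Theta_y\colon(v,k)\mapsto v\,a(y)k g_0\G$ is, on a dense open set, a submersion of $\cV\times K$ onto an open subset of $\Omega$; by the contraction property above together with discreteness of $\G$, the number of $\gamma\in\G$ with $\gamma\in g_\omega^{-1}\cV a(y)K g_0$ — equivalently the covering multiplicity of $\Theta_y$ — is bounded independently of $y$, $k$ and $g_0$. Hence the push-forward of $\mu_{P^-}|_{\cV}\otimes\mu_K$ under $\Theta_y$ is absolutely continuous with respect to $\mu_\Omega$ with density bounded by a constant $C$ depending only on $\cV$ and on the compact set containing $g_0$, so the displayed double integral is at most $C\int_\Omega(S_\chi\mathbbm 1_{\cF_c^{(\eta)}})^{s}\,\dd\mu_\Omega$, which is finite and, being a fixed quantity, does not deteriorate as $y$ grows. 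Combining the estimates completes the proof.

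The main obstacle is this last step. The translated orbits $a(y)Kg_0\G$ penetrate into the cusp of $\Omega$, where injectivity radii degenerate, so one cannot thicken by an \emph{arbitrary} small neighbourhood in $G$ and compare with $\mu_\Omega$: the covering multiplicities would blow up along the unipotent directions of $P$ contracted by $a(y)$. Confining the thickening to $P^-$ is exactly what tames this; making the multiplicity bound precise, and unwinding $\mu_{P^-}|_{\cV}\otimes\mu_K$ against $\mu_\Omega$ (where the modular function of $P^-$ enters and must be accounted for, as the relevant integrals over $\Omega$ are $a(y)$-invariant), is the technical core, and it can alternatively be carried out through reduction theory as in the proof of Theorem~\ref{thm:L1}. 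Note that the crude estimate of Lemma~\ref{lem:Upper-Bound-Siegel} is \emph{not} by itself sufficient here, since it discards the thinness of $\cF_c$; it may, however, be used to control the contribution of the cuspidal part of $\Omega$ in the last step if one prefers to split the domain of integration there.
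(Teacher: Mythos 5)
Your overall strategy --- thicken the translated $K$-orbit into a neighbourhood in $G$ and transfer the estimate to a fixed integral over $\Omega$ --- is in the same spirit as the paper's proof, but the specific thickening you chose, by a fixed $\cV\subset P^-$ multiplied on the \emph{left} of $a(y)$, leads to a claim that is false. You assert that the push-forward $\nu_y$ of $\mu_{P^-}|_{\cV}\otimes\mu_K$ under $\Theta_y\colon(v,k)\mapsto v\,a(y)k g_0\G$ has $\mu_\Omega$-density bounded uniformly in $y\geq1$. But $\nu_y$ has fixed total mass $\mu_{P^-}(\cV)>0$ for every $y$, while its support lies in the projection to $\Omega$ of
\[
\cV a(y)Kg_0 \;=\; a(y)\bigl(a(y)^{-1}\cV a(y)\bigr)Kg_0,
\]
and $a(y)^{-1}\cV a(y)\subseteq\cV$ collapses onto $\cV\cap\cZ_G(A)$ as $y\to\infty$; hence $\mu_G\bigl(\cV a(y)Kg_0\bigr)=\mu_G\bigl(a(y)^{-1}\cV a(y)\cdot K\bigr)\to 0$, and so does the $\mu_\Omega$-measure of its image. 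A measure of fixed positive mass carried by sets of vanishing $\mu_\Omega$-measure cannot have density $\leq C$. The conceptual error is that your covering-multiplicity argument controls the cardinality of fibres of $\Theta_y$ but not its Jacobian, which degenerates in $y$: rewriting $v\,a(y)k$ as $a(y)\bigl(a(y)^{-1}v a(y)\bigr)k$ costs the modular factor $y^{\rho'}$ with $\rho'>0$ that you flag at the end but never actually cancel --- there is nothing in the $y$-independent quantity $\int_\Omega(S_\chi\mathbbm 1_{\cF_c^{(\eta)}})^s\,\dd\mu_\Omega$ against which it could cancel.

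The paper's proof places the thickening \emph{between} $a(y)$ and $k$: one bounds $S_\chi\mathbbm 1_{\cF_c}(a(y)kg_0\G)\leq S_\chi\mathbbm 1_{A(r)}(a(y)\,p\,kg_0\G)$ for all $p$ in a fixed ball $B_P(1)\subset P$, using $a(y)B_P(1)a(y)^{-1}\subseteq B_P(1)$ for $y\geq1$, and then averages over $(p,k)\in B_P(1)\times K$. The crucial gain is that $(p,k)\mapsto a(y)\,p\,k\,g_0$ factors as the $y$-independent map $(p,k)\mapsto pk$, whose push-forward density against $\mu_G$ on a fixed compact set is bounded independently of $y$, sandwiched between the $\mu_G$-preserving translations $g\mapsto a(y)g$ and $g\mapsto gg_0$. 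No modular blow-up occurs. The repair to your argument is therefore to thicken along $P$ rather than $P^-$, and to place the perturbing element between $a(y)$ and $k$ rather than on the left of $a(y)$.
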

In the proof we use that the parabolic subgroup is a semi-direct product
\[
P = \mathrm{Levi} \ltimes U,
\]
where $\mathrm{Levi}$ is centralized by $A$ and $\ku = \Lie (U)$ is contracted by $\operatorname{Ad}(A)$. As a result, the $K$-average can be bounded by an integral over a neighborhood in $G$, and ultimately by an integral over $\Omega = G/\Gamma$, with the $G$-invariant measure $\mu_\Omega$, which absorbs the $a(y)$-factor.

\begin{proof}
Fix $s \in [1, 1 + \varepsilon)$ and let $Q \subset G$ be a compact subset. Consider the symmetric open neighborhood $B_{P}(1) = \{p \in P : d_P(p, \mathrm{Id}) < 1\}$ of $I \in P$. For all $y \geq 1$, we have $a(y) B_{P}(1) a(y)^{-1} \subseteq B_{P}(1)$. For each $r > 1$, we let $A(r) = B_{\widetilde{X}}(r) \smallsetminus B_{\widetilde{X}}(r^{-1})$. Thus there exists $r > 1$ such that for all $y\geq 1 $ and $p \in B_{P}(1)$, we have
\begin{equation} \label{eq:P_doesn't_grow_under_A}
a(y)^{-1} \cF_c \subset p^{-1} a(y)^{-1} A(r).
\end{equation}
Let $\mu_P$ be a left Haar measure on $P$ and consider the probability measure 
\[
\mu_P(B_P(1))^{-1} \mu_P|_{B_P(1)}.
\]
By \eqref{eq:P_doesn't_grow_under_A}, we get for all $y \geq 1$ and  $g_0 \in Q$,
\begin{multline*}
\int_{K} \big | S_{\chi} \mathbbm{1}_{\cF_c} ( a(y) k g_0 \G) \big|^s \, \dd \mu_K(k) \\ \leq \, \mu_P(B_P(1))^{-1} \int_{B_P(1)}\int_{K} \big | S_{\chi} \mathbbm{1}_{A(r)} ( a(y) p k g_0 \G) \big|^s \, \dd \mu_K(k) \dd \mu_P(p).
\end{multline*}
Now $B_P(1) \times K$ is a neighborhood of $I \in G$ and the product measure $\mu_P \times \mu_K$ is comparable to the Haar measure $\mu_G$ on $G$. Thus, we have  
\begin{align*}
\int_{B_P(1)}\int_{K} \big | S_{\chi} &\mathbbm{1}_{A(r)} ( a(y) p k g_0 \G) \big|^s \, \dd \mu_K(k) \dd \mu_P(p) \\
&\ll \, \int_{B_P(1) K} \big | S_{\chi} \mathbbm{1}_{A(r)} ( a(y) g g_0 \G) \big|^s \, \dd \mu_G(g) \\
&\leq \, \int_{B_P(1)  K  Q} \big | S_{\chi} \mathbbm{1}_{A(r)} ( a(y) g \G) \big|^s \, \dd \mu_G(g).
\end{align*}
Let $\mathfrak{F}$ be a fundamental domain with non-empty  interior for $\G \subset G$. There exists a finite subset $I \subset G$ such that $B_P(1) K Q \subset \bigcup_{h \in I} h \mathfrak{F}$. Using the $G$-invariance of $\mu_{\Omega}$, 
\begin{align*}
\int_{B_P(1)  K  Q} \big | S_{\chi} &\mathbbm{1}_{A(r)} ( a(y) g \G) \big|^s \, \dd \mu_G(g) \leq \int_{\bigcup_{h \in I} h \mathfrak{F}} \big | S_{\chi} \mathbbm{1}_{A(r)} ( a(y) g \G) \big|^s \, \dd \mu_G(g) \\
&\leq \sum_{h \in I} \int_{\mathfrak{F}} \big | S_{\chi} \mathbbm{1}_{A(r)} ( a(y) h g \G) \big|^s \, \dd \mu_G(g) \\
&\leq |I| \int_{\Omega} \big | S_{\chi} \mathbbm{1}_{A(r)} ( g \G) \big|^s \, \dd \mu_{\Omega}(g\G).
\end{align*}
By the choice of $s \in [1,1+ \varepsilon)$ and Theorem~\ref{thm:L1}, the last integral converges, as required. 
\end{proof}

\subsection{Non-escape of mass} \label{sec:Non-espace}
Next, we establish a non-escape of mass property for the orbit $a(y) K \G / \G \subset \Omega$.  We recall that, for every $g \in G$, we defined 
\[
\lambda_{\chi} (g\G) = \min_{\bm{v} \in \bV_{\chi}(\Z) \smallsetminus \{\bm{0}\}} \| g \bm{v} \|
\]
to be the length of the shortest non-zero vector in $g \bV_{\chi}(\Z)$. Let us also recall that $d$ stands for the dimension of $X$ and $\beta_{\chi}$ is the Diophantine exponent of $X$ with respect to $\chi$.  For each $\delta \in (0,1)$, we define an open cusp neighborhood in $\Omega$ by
\[
\Omega_\delta = \{g\G \in \Omega : \lambda_{\chi} (g\G) < \delta \}.
\]

\begin{lemma} \label{prop:Non-Escape}
There exists $\kappa > 0$ such that for all $\delta \in (0,1)$ and $y \in [\delta^{-\kappa}, + \infty)$,
\[
\mu_K \big (\{ k \in K : \lambda_\chi(a(y) k \G) < \delta \} \big ) \, \ll \, \delta^{\beta_{\chi} d}.
\]
\end{lemma}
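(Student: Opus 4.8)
The plan is to bound $\mu_K\bigl(\{k:\lambda_\chi(a(y)k\G)<\delta\}\bigr)$ by testing the orbit $a(y)K\G$ against a well-chosen smooth compactly supported function $\phi$ on $\Omega$ and invoking the effective single equidistribution estimate \eqref{eq:Single-Eq} of Theorem~\ref{thm:Effective} with the constant test function $f\equiv1$ on $K$. Theorem~\ref{thm:Effective} applies here because, under the standing assumptions of this section, the $G$-action on $\Omega$ has a spectral gap and $a(e)=\exp(Y)$ projects non-trivially to each simple factor. Fixing once and for all a compact $Q\ni\G$, it gives, with the $c>0$ and $r\ge1$ of that theorem,
\[
\mu_K\bigl(\{k\in K:\lambda_\chi(a(y)k\G)<\delta\}\bigr)\ \le\ \int_K\phi\bigl(a(y)k\G\bigr)\dd\mu_K(k)\ =\ \mu_\Omega(\phi)+O\bigl(y^{-c}\cS_r(\phi)\bigr),
\]
provided $\phi(a(y)k\G)\ge1$ whenever $\lambda_\chi(a(y)k\G)<\delta$. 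So the goal is to build such a $\phi$ with $\mu_\Omega(\phi)\ll\delta^{\beta_\chi d}$ and $\cS_r(\phi)\ll1$; then the right-hand side is $\ll\delta^{\beta_\chi d}+y^{-c}$, and any $\kappa$ with $\kappa c\ge\beta_\chi d$ yields the lemma for $y\ge\delta^{-\kappa}$. (For $\delta$ bounded away from $0$ the statement is trivial, since the left side is a probability, so I assume $\delta$ small.)

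First I record two facts. Put $\eta=-\chi(Y)>0$. Since $\pi_\chi(a(y))$ is self-adjoint with eigenvalue $y^{\mu(Y)}$ on the weight space $\bV^{\mu}$, and every $\Q$-weight satisfies $\mu(Y)=\chi(Y)+c_\alpha(\mu)\ge-\eta$ by \eqref{eq:Q-weights}, we get $\|a(y)\bm{w}\|\ge y^{-\eta}\|\bm{w}\|$ for $y\ge1$; combined with the fact that $\pi_\chi(K)$ consists of isometries and that $\bV_\chi(\Z)$ is $\Z$-spanned by an orthonormal weight basis (so its systole is $\ge1$), this gives the uniform lower bound $\lambda_\chi(a(y)k\G)\ge y^{-\eta}$ for all $k\in K$ and $y\ge1$. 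Second, a standard reduction-theory computation — using $G=\mathfrak{S}C\G$ with $\mathfrak{S}=KA_\tau\bm{\omega}$ a Siegel set, $\bm{\omega}$ compact in $M_0U_0\subseteq L$, $C$ finite, the estimate $\lambda_\chi\asymp\chi(a)$ on $\mathfrak{S}$ as in \eqref{eq:Proof-lem:Upper-Bound-Siegel}, and the measure decomposition \eqref{eq:Measure_Decomposition_G} — gives $\mu_\Omega(\Omega_\delta)\asymp\delta^{\,d/\eta}=\delta^{\beta_\chi d}$; this is also the $\delta\to0^{+}$ asymptotic of $\lambda_{\widetilde X}(B_{\widetilde X}(\delta))$ read off from \eqref{eq:Measure-On_Tilde-X}, since $\|ka(y)\bm{e}_\chi\|=y^{-\eta}$ and $\beta_\chi\eta=1$.

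Now I construct $\phi$. By the first fact, $\lambda_\chi$ stays in $[y^{-\eta},+\infty)$ on the orbit $a(y)K\G$, so it suffices for $\phi$ to equal $1$ on the band $\{y^{-\eta}\le\lambda_\chi<\delta\}$. Fix a smooth $\psi:\R\to[0,1]$ with $\psi\equiv1$ on $(-\infty,0]$ and $\psi\equiv0$ on $[1,+\infty)$, and a smooth $\tilde\lambda>0$ on $\Omega$ with $\tilde\lambda\asymp\lambda_\chi$ and $\sup_\Omega|\cD_Z\log\tilde\lambda|\ll_Z1$ for $Z$ in a basis of $\kg$ — such $\tilde\lambda$ exists because $\lambda_\chi$ is locally a minimum of finitely many norms $\|g\bm{v}\|$, each with bounded $\cD_Z\log\|g\bm{v}\|$, so $\log\lambda_\chi$ can be smoothed with bounded $\kg$-derivatives. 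For suitable absolute constants $0<c_1<C$ (depending only on the comparison $\tilde\lambda\asymp\lambda_\chi$), set
\[
\phi\ =\ \psi\!\left(\log_2\frac{\tilde\lambda}{C\delta}\right)\psi\!\left(\log_2\frac{c_1y^{-\eta}}{\tilde\lambda}\right).
\]
Then $\phi\equiv1$ on $\{y^{-\eta}\le\lambda_\chi<\delta\}$, while $\supp\phi$ is contained in the compact set $\{\lambda_\chi\ge\tfrac12 c_1y^{-\eta}\}$ (Mahler's criterion: in $\Q$-rank one $\Omega\smallsetminus\Omega_\varepsilon$ is compact for every $\varepsilon>0$) and in $\Omega_{C'\delta}$ for a constant $C'$; hence $\phi\in C_c^\infty(\Omega)$ and $\mu_\Omega(\phi)\le\mu_\Omega(\Omega_{C'\delta})\ll\delta^{\beta_\chi d}$ by the second fact. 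Since each $\psi$-factor is a fixed profile applied after a logarithmic coordinate of bounded $\kg$-derivatives and transitions over an interval of fixed length, the chain rule gives $\cS_r(\phi)\ll1$, uniformly in $\delta$ and $y$. Plugging back into the first display and taking $\kappa=\max\{\beta_\chi d/c,\ \beta_\chi\}$ — the second term guaranteeing that the band $[c_1y^{-\eta},C\delta]$ is nonempty once $y\ge\delta^{-\kappa}$ and $\delta$ is small — completes the argument.

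The part I expect to be the main obstacle is exactly this uniform Sobolev control: one must not smooth $\lambda_\chi$ at the additive scales $y^{-\eta}$ or $\delta$ — that would cost $\cS_r(\phi)\sim\delta^{-O(1)}$ and destroy the $y^{-c}$ gain — but rather cut it off on a \emph{bounded logarithmic} scale, which is possible only because the cusp is one-dimensional and $\log\lambda_\chi$ has uniformly bounded $\kg$-derivatives throughout the cuspidal region. The other nontrivial input is the cusp-volume estimate $\mu_\Omega(\Omega_\delta)\asymp\delta^{\beta_\chi d}$, the measure-theoretic counterpart of the height count \eqref{eq:Number_Primitive_Bounded_Height}; granting these two points, the lemma drops out of Theorem~\ref{thm:Effective} in one line.
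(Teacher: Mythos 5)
Your proof is correct, but it takes a genuinely different route from the paper's. You build a test function $\phi$ supported directly \emph{inside} the cusp on a logarithmic band $\{c_1 y^{-\eta}\le\tilde\lambda\le 2C\delta\}$; this requires first establishing the uniform lower bound $\lambda_\chi(a(y)k\G)\ge y^{-\eta}$ (via the weight estimate $\mu(Y)\ge\chi(Y)$) so that the band captures the whole set you want to bound, and then a careful logarithmic-scale mollification of $\lambda_\chi$ to keep $\cS_r(\phi)\ll1$. The paper instead works by contraposition: it convolves the indicator of the compact \emph{complement} $\Omega_\delta^c$ with a fixed bump $\rho_1\in C_c^\infty(G)$, so that compactness of the support is automatic, derivatives fall only on $\rho_1$ (giving $\cS_r(\rho_1*\mathbbm{1}_{\Omega_\delta^c})\ll\cS_r(\rho_1)\ll1$ for free), and the sandwich $\chi_\delta\le\mathbbm{1}_{\Omega_{\xi^{-1}\delta}^c}$ then yields $\mu_K(\{\lambda_\chi(a(y)k\G)\ge\xi^{-1}\delta\})\ge1-\mu_\Omega(\Omega_\delta)+O(y^{-c})$, from which the claim follows without ever needing the lower bound $\lambda_\chi(a(y)k\G)\ge y^{-\eta}$. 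Both arguments use the same two inputs — effective single equidistribution (Theorem~\ref{thm:Effective} with $f\equiv1$) and the cusp volume estimate $\mu_\Omega(\Omega_\delta)\asymp\delta^{\beta_\chi d}$, which the paper cites from de~Saxc\'e rather than rederiving — and both correctly identify uniform Sobolev control of the test function as the crux; the paper's convolution trick disposes of it more economically, while your construction is more explicit about the geometry of the orbit inside the cusp and makes the mechanism of the $\delta^{\beta_\chi d}$-gain visible. Your slightly larger admissible $\kappa=\max\{\beta_\chi d/c,\beta_\chi\}$ (vs.\ the paper's $\beta_\chi d/c$, which dominates anyway since $c<d$) is immaterial for the statement.
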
 

\begin{proof}
Let us denote by $\mathbbm{1}_{\Omega_\delta^c}$ the indicator function of the complementary subset $\Omega_\delta^c = \Omega \smallsetminus \Omega_\delta$. By Mahler's compactness criterion, the support of $\mathbbm{1}_{\Omega_\delta^c}$ is compact. We fix once and for all a non-negative function $\rho_1 \in C_c^{\infty}(G)$ with $\int_G \rho_1 \, \dd \mu_G  = 1$ and define $\chi_\delta = \rho_1 * \mathbbm{1}_{\Omega_\delta^c} : \Omega \rightarrow [0, +\infty)$. Since $\mu_{\Omega}$ is $G$-invariant, we have 
$$
\int_\Omega \chi_\delta \, \dd \mu_{\Omega} = \int_\Omega \mathbbm{1}_{\Omega_\delta^c} \, \dd \mu_{\Omega} = \mu_{\Omega}(\{\lambda_\chi \geq \delta \}).
$$
Moreover, by the $G$-invariance of $\mu_{\Omega}$ again, for any differential operator $\cD$, we have $\cD \chi_\delta = \cD(\rho) * \mathbbm{1}_{\Omega_\delta^c}$. In particular, we have $\chi_\delta \in C_c^{\infty}(\Omega)$ and, letting $r \geq 1$ be the integer from Theorem~\ref{thm:Effective}, also $\cS_r(\chi_\delta) \ll \cS_r(\rho_1) \ll 1$. Moreover, there exists $\xi = \xi(\rho_1) > 1$ such that, for every $g \in \supp(\rho_1)$ and $x \in \Omega$, we have $\lambda_\chi(g x) \leq \xi \lambda_\chi(x)$. Therefore, for every $g \in \supp(\rho_1)$, we have 
\[
\{ x \in \Omega : \lambda_\chi(gx) \geq \delta\} \subseteq \{ x \in \Omega : \lambda_\chi(x) \geq \xi^{-1} \delta \}
\]
and hence $\chi_\delta \leq \mathbbm{1}_{\Omega_{\xi^{-1} \delta}^c}$. Thus, for every $y \geq 1$, we have
\begin{align*}
\mu_K(\{k \in K:\lambda_\chi( a(y)k \G) \geq \xi^{-1} \delta \}) &= \int_{K} \mathbbm{1}_{\Omega_{\xi^{-1}\delta}^c}(a(y) k \G) \,  \dd\mu_K(k) \\
&\geq \int_K \chi_\delta(a(y) k \G) \, \dd\mu_K(k).
\end{align*}
By Theorem~\ref{thm:Effective},  there exists $c >0 $ such that, for every $y \geq 1$, we have
\begin{align*}
\int_K \chi_\delta(a(y) k \G ) \, d\mu_K(k) &= \int_\Omega \chi_\delta \, d\mu_{\Omega} + O\left(y^{-c} \cS_r(\chi_\delta) \right) \\
&=\mu_{\Omega} (\{\lambda_\chi \geq \delta \}) + O ( y^{-c} ).
\end{align*}
By \cite[Proposition 3.1.1]{deSaxce20}, we have the measure estimate 
\begin{equation} \label{eq:Measure-Cusp-Saxce}
\mu_{\Omega}(\Omega_\delta) \asymp \delta^{\beta_{\chi} d}.
\end{equation}
Putting everything together, for every $y \geq 1$, this yields
\[
\mu_K(\{k \in K: \lambda_\chi(a(y) k \G) \geq \xi^{-1} \delta \}) \geq \mu_{\Omega}(\{\lambda_\chi \geq \delta \})+O\left(y^{-c} \right) = 1 + O\left(\delta^{\beta_{\chi} d} + y^{-c} \right).
\]
Therefore, since $\rho_1$ is fixed and $\xi$ only depends on $\rho_1$, for every $y \geq 1$, we have
\[
\mu_K(\{k \in K:\lambda_\chi(a(y) k \G) < \delta \}) \ll (\xi \delta)^{\beta_{\chi} d}+y^{-c} \ll \delta^{\beta_{\chi} d}+y^{-c}.
\]
Letting $\kappa=\frac{\beta_{\chi} d}{c}$, the claim holds for all $\delta \in (0,1)$ and $y \in [ \delta^{-\kappa}, +\infty)$.
\end{proof}

\subsection{Approximation by smooth compactly supported functions} \label{sec:Approx-Smooth}

The Siegel transform of the indicator function of the set $\cF_{c}$ appearing in the decomposition \eqref{eq:decomposition} is neither smooth nor bounded. In order to apply effective equidistribution results, we approximate $S_{\chi} \mathbbm{1}_{\cF_{c}}$ by smooth compactly supported functions. We again fix the integer $r \geq 1$ as in Theorem~\ref{thm:Effective}.

\begin{lemma} \label{lem:Truncated-Siegel}
For every $\xi>1$, there exists a family of functions $( D_\delta )_{\delta \in (0,1)}$ in $C_c^{\infty}(\Omega)$ satisfying
$$ 0 \leq D_\delta \leq 1 , \quad D_\delta = 1 \text{ on } \{ \lambda_\chi \geq \xi \delta \} , \quad D_\delta = 0 \text{ on } \{ \lambda_\chi < \xi^{-1} \delta \} , \quad \cS_r(D_\delta ) \ll 1.\\
$$
\end{lemma}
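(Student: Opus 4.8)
The plan is to build $D_\delta$ by smoothing the indicator function of the compact set $\{\lambda_\chi \geq \delta\}$ via convolution with a fixed approximate identity on $G$, exactly as in the proof of Lemma~\ref{prop:Non-Escape}. First I would fix, once and for all, a non-negative $\rho_1 \in C_c^\infty(G)$ with $\int_G \rho_1 \, \dd\mu_G = 1$. Since $\lambda_\chi$ is continuous on $\Omega$, a compactness argument (using that $\supp(\rho_1)$ is a fixed compact subset of $G$, so that $\rho_1$ is uniformly continuous and its support has bounded "diameter" in the word metric on $G$) produces a constant $\xi_0 = \xi_0(\rho_1) > 1$ such that for every $g \in \supp(\rho_1)$ and every $x \in \Omega$ one has $\xi_0^{-1}\lambda_\chi(x) \leq \lambda_\chi(gx) \leq \xi_0 \lambda_\chi(x)$; this is the same estimate already used in the proof of Lemma~\ref{prop:Non-Escape}. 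Given $\xi > 1$ in the statement, I would then \emph{rescale} $\rho_1$ if necessary — that is, replace $\rho_1$ by $\rho_1^{(\eta)}(g) = \eta^{-D}\rho_1(\exp(\eta^{-1}\log g))$ (suitably normalized) with $\eta$ small enough that the associated constant $\xi_0(\rho_1^{(\eta)})$ satisfies $\xi_0(\rho_1^{(\eta)}) \leq \xi$ — so that without loss of generality we may assume $\xi_0 \leq \xi$ from the start. With this choice fixed (depending only on $\xi$), set
\[
D_\delta = \rho_1 * \mathbbm{1}_{\{\lambda_\chi \geq \delta\}}, \qquad D_\delta(x) = \int_G \rho_1(g)\,\mathbbm{1}_{\{\lambda_\chi \geq \delta\}}(g^{-1}x)\,\dd\mu_G(g).
\]

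The four required properties then follow routinely. The bounds $0 \leq D_\delta \leq 1$ are immediate since $\rho_1 \geq 0$, $\int \rho_1 = 1$, and $0 \leq \mathbbm{1} \leq 1$. For the support condition: if $\lambda_\chi(x) < \xi^{-1}\delta$, then for every $g \in \supp(\rho_1)$ we have $\lambda_\chi(g^{-1}x) \leq \xi_0 \lambda_\chi(x) \leq \xi\cdot\xi^{-1}\delta \cdot (\xi_0/\xi) < \delta$ — more carefully, $\lambda_\chi(g^{-1}x)\le\xi_0\lambda_\chi(x)<\xi_0\xi^{-1}\delta\le\delta$ since $\xi_0\le\xi$ — so the integrand vanishes identically and $D_\delta(x) = 0$. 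Symmetrically, if $\lambda_\chi(x) \geq \xi\delta$, then for every $g \in \supp(\rho_1)$, $\lambda_\chi(g^{-1}x) \geq \xi_0^{-1}\lambda_\chi(x) \geq \xi_0^{-1}\xi\delta \geq \delta$, so $\mathbbm{1}_{\{\lambda_\chi \geq \delta\}}(g^{-1}x) = 1$ and hence $D_\delta(x) = \int_G \rho_1\,\dd\mu_G = 1$. Compact support of $D_\delta$ follows from Mahler's compactness criterion applied to $\{\lambda_\chi \geq \xi^{-1}\delta\}$ together with the support containment just established. Smoothness and the Sobolev bound come from differentiating under the integral sign: for any monomial differential operator $\cD$ built from the chosen basis $\cB$, the left-invariance structure gives $\cD D_\delta = (\cD\rho_1) * \mathbbm{1}_{\{\lambda_\chi \geq \delta\}}$ (one moves the derivative, which acts in the $x$-variable via left translation by $\exp(tZ)$, onto the $\rho_1$-factor by an integration-by-parts / change-of-variables in $G$), whence $\|\cD D_\delta\|_\infty \leq \|\cD\rho_1\|_\infty$ and therefore $\cS_r(D_\delta) \leq \cS_r(\rho_1) \ll 1$, with the implied constant depending only on $\xi$ (through the fixed choice of $\rho_1$) and on $r$.

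The only point requiring a little care — and the mild obstacle — is the rescaling step ensuring $\xi_0(\rho_1) \leq \xi$ for the \emph{prescribed} $\xi$: one needs the distortion constant of $\lambda_\chi$ under left translation by elements of $\supp(\rho_1)$ to tend to $1$ as $\supp(\rho_1)$ shrinks to the identity. This is a consequence of the fact that $\lambda_\chi$ is a continuous function built from a finite minimum of the norms $\|g\bm{v}\|$ over $\bm{v} \in \bV_\chi(\Z)\smallsetminus\{\bm 0\}$ restricted to a fixed compact region (by Mahler, only finitely many $\bm v$ matter near any given point), and each $g \mapsto \|g\bm v\|$ is Lipschitz on compacta; a standard covering argument then gives uniform control, so that shrinking $\supp(\rho_1)$ forces $\xi_0(\rho_1) \to 1$. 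Everything else is the convolution-smoothing bookkeeping already rehearsed in the proof of Lemma~\ref{prop:Non-Escape}, so the proof is short.
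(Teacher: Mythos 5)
Your proposal is correct and is exactly the intended argument: the paper omits the proof (referring to \cite[Lemma~4.11]{BG19}), but the convolution $\rho_1 * \mathbbm{1}_{\{\lambda_\chi \geq \delta\}}$ together with the two-sided distortion estimate $\xi_0^{-1}\lambda_\chi(x) \leq \lambda_\chi(gx) \leq \xi_0\lambda_\chi(x)$ for $g$ in the (small, symmetric) support of $\rho_1$ is precisely the mechanism already deployed in the proof of Lemma~\ref{prop:Non-Escape}, and your rescaling step correctly handles the only new feature, namely that $\xi>1$ is prescribed in advance. One small slip worth fixing: from $\cD D_\delta = (\cD\rho_1) * \mathbbm{1}_{\{\lambda_\chi \geq \delta\}}$ the correct bound is $\|\cD D_\delta\|_\infty \leq \|\cD\rho_1\|_{L^1(G)}$ rather than $\|\cD\rho_1\|_\infty$; since $\mu_G(\supp\rho_1)$ is a fixed constant (depending only on $\xi$), this still yields $\cS_r(D_\delta) \ll_{\xi,r} 1$ as claimed.
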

The proof is essentially analogous to that of \cite[Lemma~4.11]{BG19} and we omit the details. We refer to the family $( D_\delta )_{\delta \in (0,1)}$ as a family of smooth cut-off functions on $\Omega$ and, fixing once and for all a $\xi > 1$ in the above lemma, we will omit $\xi$ from the notation. For every $\delta \in (0,1)$ and $f \in B_c^{\infty}(\widetilde{X})$, we define the \emph{$\delta$-truncated Siegel transform} $S_{\chi}^{(\delta)} f : \Omega \rightarrow \C$ of $f$ by
\begin{equation} \label{eq:Truncated-Siegel}
\forall \, g \in G, \quad S_{\chi}^{(\delta)} f(g) = D_\delta(g\G) \, S_{\chi} f (g\G).
\end{equation}

Next, we approximate $\mathbbm{1}_{\cF_c}$ for $c$ arbitrarily close to $1$ by a family of non-negative smooth compactly supported functions. For every $\varepsilon \in (0,1)$, we recall the definition of the $\varepsilon$-neighborhood $\cF_{c}(\varepsilon)$ of $\cF_c$ given by
$$
\cF_{c}(\varepsilon) = \{\bm{v} \in \widetilde{X} : \|u_{\bm{v}}^-\|_{\ku^-} < (1+\varepsilon)^{1+\beta_{\chi}} \, c \, \|\bm{v}^+\|^{-\beta_{\chi}}, (1+\varepsilon)^{-1} \leq \|\bm{v}^+\| < (1+\varepsilon) e \}.
$$
There exists a family $( f_{\varepsilon,c} )_{\varepsilon \in (0,1), \, c \in [1/2,3/2]} \subset C_c^\infty(\widetilde{X})$ with $\supp(f_{\varepsilon,c}) \subset \cF_{c}(\varepsilon)$ satisfying the following properties:
\begin{equation} \label{eq:EpsilonApproximationChi}
\forall \, \varepsilon \in (0,1), \quad \mathbbm{1}_{\cF_c} \leq f_{\varepsilon,c} \leq 1, \quad \|f_{\varepsilon,c} - \mathbbm{1}_{\cF_c} \|_{L^{1}(\widetilde{X})} \ll \varepsilon, 
\quad \cS_r(f_{\varepsilon,c}) \ll \varepsilon^{-r},
\end{equation}
and the implicit constants are uniform in $c \in [1/2,3/2]$.

\begin{lemma} \label{prop:SmoothApproximationOrbit}
There exists $\varepsilon_1 > 0$ such that for all $\varepsilon \in (0,1)$ and $y \in [ (\beta_{\chi} \varepsilon)^{- \tfrac{\beta_{\chi}}{\varepsilon_1}}, + \infty)$, we have
$$
\int_{K} \left |S_{\chi} f_{\varepsilon,c}( a(y) k \G) - S_{\chi} \mathbbm{1}_{\cF_c}(a(y) k \G) \right | \dd \mu_K(k) \, \ll \, \varepsilon.
$$
The implicit constant is uniform in $c \in [1/2,3/2]$, but depends on $\beta_{\chi}$ and $d$. 
\end{lemma}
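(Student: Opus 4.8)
The plan is to dominate $S_{\chi}f_{\varepsilon,c}-S_{\chi}\mathbbm{1}_{\cF_c}$ by the Siegel transform of a \emph{smooth} compactly supported function of $L^1$-mass $\ll\varepsilon$, and then estimate its $K$-average along the expanding orbit by the effective single equidistribution statement \eqref{eq:Single-Eq} of Theorem~\ref{thm:Effective}. Since $\mathbbm{1}_{\cF_c}\le f_{\varepsilon,c}$ and $S_{\chi}$ is a positive operator, the integrand equals $S_{\chi}(f_{\varepsilon,c}-\mathbbm{1}_{\cF_c})\ge 0$, and $f_{\varepsilon,c}-\mathbbm{1}_{\cF_c}$ vanishes on $\cF_c$ and is supported in $\cF_c(\varepsilon)$, hence in a shell of $\lambda_{\widetilde{X}}$-volume $\ll\varepsilon$ contained in a fixed compact set $\mathcal{K}\subset\widetilde{X}$, uniformly in $c\in[1/2,3/2]$ and $\varepsilon\in(0,1)$. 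By a construction analogous to that of the family $(f_{\varepsilon,c})$, I would then pick $(\widetilde{f}_{\varepsilon,c})\subset C_c^{\infty}(\widetilde{X})$, all supported in $\mathcal{K}$, with $f_{\varepsilon,c}-\mathbbm{1}_{\cF_c}\le\widetilde{f}_{\varepsilon,c}\le 1$, $\|\widetilde{f}_{\varepsilon,c}\|_{L^1(\widetilde{X})}\ll\varepsilon$, and $\cS_r(\widetilde{f}_{\varepsilon,c})\ll\varepsilon^{-r}$ (all uniformly in $c$), where $r\ge 1$ is the integer of Theorem~\ref{thm:Effective}. It then suffices to bound $\int_K S_{\chi}\widetilde{f}_{\varepsilon,c}(a(y)k\G)\,\dd\mu_K(k)$.

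Next I would fix $s\in(1,1+\varepsilon_0)$, where $\varepsilon_0>0$ is such that $S_{\chi}$ maps $B_c^{\infty}(\widetilde{X})$ into $L^{1+\varepsilon_0}(\Omega)$ (Theorem~\ref{thm:L1}, which applies as $\bP$ is maximal here), set $s'=s/(s-1)$, and take the smooth cut-offs $D_\delta\in C_c^{\infty}(\Omega)$ of Lemma~\ref{lem:Truncated-Siegel} for $\delta\in(0,1)$, splitting $S_{\chi}\widetilde{f}_{\varepsilon,c}=S_{\chi}^{(\delta)}\widetilde{f}_{\varepsilon,c}+(1-D_\delta)S_{\chi}\widetilde{f}_{\varepsilon,c}$. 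For the tail term, $1-D_\delta$ is supported in $\{\lambda_\chi<\xi\delta\}$ and $S_{\chi}\widetilde{f}_{\varepsilon,c}\le S_{\chi}\mathbbm{1}_{\mathcal{K}}$, so Hölder's inequality with exponents $s,s'$, the non-escape estimate of Lemma~\ref{prop:Non-Escape} (applied with $\xi\delta$ in place of $\delta$), and the argument of Lemma~\ref{lem:Replacement_for_alpha_s} applied to $\mathbbm{1}_{\mathcal{K}}$ give, for all $y\ge(\xi\delta)^{-\kappa}$,
\[
\int_K(1-D_\delta)(a(y)k\G)\,S_{\chi}\widetilde{f}_{\varepsilon,c}(a(y)k\G)\,\dd\mu_K(k)\ \ll\ \delta^{\beta_\chi d/s'}.
\]

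For the main term I would exploit that $S_{\chi}$ intertwines the $G$-actions on $\widetilde{X}$ and on $\Omega$, so that $\cD(S_{\chi}\widetilde{f}_{\varepsilon,c})=S_{\chi}(\widetilde{\cD}\widetilde{f}_{\varepsilon,c})$ for every monomial $\cD$ of degree $\le r$ in a basis of $\kg$ and the corresponding operator $\widetilde{\cD}$ on $\widetilde{X}$; as $\widetilde{\cD}\widetilde{f}_{\varepsilon,c}$ is supported in $\mathcal{K}$ with $\|\widetilde{\cD}\widetilde{f}_{\varepsilon,c}\|_\infty\ll\cS_r(\widetilde{f}_{\varepsilon,c})\ll\varepsilon^{-r}$, Lemma~\ref{lem:Upper-Bound-Siegel} gives $|\cD(S_{\chi}\widetilde{f}_{\varepsilon,c})(g\G)|\ll\varepsilon^{-r}\lambda_\chi(g\G)^{-\beta_\chi d}$, whence, using $\cS_r(D_\delta)\ll 1$, $\supp D_\delta\subseteq\{\lambda_\chi\ge\xi^{-1}\delta\}$ and the Leibniz rule, $S_{\chi}^{(\delta)}\widetilde{f}_{\varepsilon,c}\in C_c^{\infty}(\Omega)$ with $\cS_r(S_{\chi}^{(\delta)}\widetilde{f}_{\varepsilon,c})\ll\varepsilon^{-r}\delta^{-\beta_\chi d}$. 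Applying \eqref{eq:Single-Eq} with $f\equiv 1$, $\phi=S_{\chi}^{(\delta)}\widetilde{f}_{\varepsilon,c}$, $x=\G$, and combining with $0\le S_{\chi}^{(\delta)}\widetilde{f}_{\varepsilon,c}\le S_{\chi}\widetilde{f}_{\varepsilon,c}$ and the mean value formula \eqref{eq:WeilIntegrationFormula*} (so that $\int_\Omega S_{\chi}^{(\delta)}\widetilde{f}_{\varepsilon,c}\,\dd\mu_\Omega\le\int_{\widetilde{X}}\widetilde{f}_{\varepsilon,c}\,\dd\lambda_{\widetilde{X}}\ll\varepsilon$), I would obtain, with $c_0>0$ the exponent furnished by Theorem~\ref{thm:Effective} and for $y\ge 1$,
\[
\int_K S_{\chi}\widetilde{f}_{\varepsilon,c}(a(y)k\G)\,\dd\mu_K(k)\ \ll\ \varepsilon+y^{-c_0}\varepsilon^{-r}\delta^{-\beta_\chi d}+\delta^{\beta_\chi d/s'}.
\]
I would finish by setting $\delta=\varepsilon^{s'/(\beta_\chi d)}$, so the last term is $\asymp\varepsilon$ and $(\xi\delta)^{-\kappa}\asymp\varepsilon^{-\kappa s'/(\beta_\chi d)}$, while the middle term becomes $y^{-c_0}\varepsilon^{-r-s'}$, which is $\le\varepsilon$ once $y\ge\varepsilon^{-(1+r+s')/c_0}$; thus the bound $\ll\varepsilon$ holds for $y\ge C\varepsilon^{-M}$ with $M=\max\{(1+r+s')/c_0,\ \kappa s'/(\beta_\chi d)\}$, and choosing $\varepsilon_1>0$ small enough then places the stated range $y\ge(\beta_\chi\varepsilon)^{-\beta_\chi/\varepsilon_1}$ inside this range, the implicit constant depending only on $\beta_\chi$ and $d$.

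The hard part will be the main term: the Siegel transform of the bounded, non-smooth function $f_{\varepsilon,c}-\mathbbm{1}_{\cF_c}$ grows like $\lambda_\chi^{-\beta_\chi d}$ towards the cusp, so one must truncate it at a height $\delta$, control the degree-$r$ Sobolev norm of the truncation with the right powers of $\varepsilon$ and $\delta$ — which is precisely where the intertwining property of $S_{\chi}$ and the polynomial bound of Lemma~\ref{lem:Upper-Bound-Siegel} enter — and then tune $\delta$ in terms of $\varepsilon$ so that the equidistribution error $y^{-c_0}\varepsilon^{-r}\delta^{-\beta_\chi d}$, the cusp tail $\delta^{\beta_\chi d/s'}$ (controlled through non-escape of mass and the uniform $L^s$-bound of Lemma~\ref{lem:Replacement_for_alpha_s}), and the residual mass $\|\widetilde{f}_{\varepsilon,c}\|_{L^1}\ll\varepsilon$ are all $\ll\varepsilon$ over the stated range of $y$.
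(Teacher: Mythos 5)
Your proposal is mathematically sound but follows a genuinely different route from the paper. The paper's own proof is direct and elementary: it dominates $S_\chi f_{\varepsilon,c}-S_\chi\mathbbm{1}_{\cF_c}$ by $S_\chi\mathbbm{1}_{\cR_1(\varepsilon)}+S_\chi\mathbbm{1}_{\cR_2(\varepsilon)}+S_\chi\mathbbm{1}_{\cR_3(\varepsilon)}$, where the $\cR_i$ are thin shell regions inside $\cF_c(\varepsilon)\smallsetminus\cF_c$, and then estimates each $J_i(y)=\int_K S_\chi\mathbbm{1}_{\cR_i(\varepsilon)}(a(y)k\G)\,\dd\mu_K(k)$ by hand, using polar coordinates on $\widetilde{X}$ (writing $\bm{v}=\|\bm{v}\|k_{\bm{v}}\bm{e}_\chi$), the action of $a(y)$ in these coordinates, and the effective rational-point count $\#\{\bm{v}\in\cP_\chi:\|\bm{v}\|\le T\}=\varkappa_1 T^{\beta_\chi d}(1+O(T^{-\varepsilon_1}))$ from \cite[Theorem~C]{Pfitscher24}; this counting rate is precisely the $\varepsilon_1$ appearing in the lemma's threshold. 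You instead replay, on a single translate $a(y)$, the truncate-smooth-equidistribute machinery that the paper reserves for the Birkhoff-sum step in Proposition~\ref{prop:Upper_Bound_on_Lattice_Count}: a smooth envelope $\widetilde{f}_{\varepsilon,c}$ with $\cS_r(\widetilde f_{\varepsilon,c})\ll\varepsilon^{-r}$ and $L^1$-mass $\ll\varepsilon$, the cusp cutoff $D_\delta$ with H\"older, non-escape (Lemma~\ref{prop:Non-Escape}), and the uniform $L^s$-bound (the argument of Lemma~\ref{lem:Replacement_for_alpha_s} applied to a fixed compact envelope) for the tail, effective single equidistribution (Theorem~\ref{thm:Effective}) with $f\equiv1$ for the truncated main term, and a tuning $\delta=\varepsilon^{s'/(\beta_\chi d)}$. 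This is a valid proof, but it pulls in heavier input — the spectral-gap-based equidistribution — than the lemma requires, and the resulting $\varepsilon_1$ is governed by the equidistribution exponent $c$ rather than by the counting rate, so you match the \emph{form} of the threshold only after shrinking $\varepsilon_1$. The paper's approach is leaner and more self-contained at this spot, saving the equidistribution theorem for the step where it is indispensable (the double-equidistribution variance bound in Proposition~\ref{prop:Upper_Bound_on_Lattice_Count}); your approach has the modest advantage of treating $\cF_c(\varepsilon)\smallsetminus\cF_c$ uniformly without decomposing it into the three shells $\cR_i(\varepsilon)$ and without the polar-coordinate computation.

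One small gap to fill if you carried this out: Lemma~\ref{lem:Replacement_for_alpha_s} as stated is for $S_\chi\mathbbm{1}_{\cF_c}$, so you must actually redo its short argument for $\mathbbm{1}_{\mathcal K}$ with a fixed compact $\mathcal K\supset\bigcup_{c\in[1/2,3/2],\varepsilon\in(0,1)}\cF_c(\varepsilon)$ — this works because the only structural input is that the support is a compact subset of $\widetilde X$ bounded away from the origin, so that it sits inside a $K$-invariant annulus $A(r)$ that absorbs the $B_P(1)$-perturbation; you gesture at this, and it is correct.
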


\begin{proof} 
Since $\supp(f_{\varepsilon,c}) \subset \cF_c(\varepsilon)$, we have $S_{\chi} \mathbbm{1}_{\cF_c(\varepsilon)} - S_{\chi} \mathbbm{1}_{\cF_c} \geq S_{\chi} f_{\varepsilon,c} - S_{\chi} \mathbbm{1}_{\cF_c}$.  The difference $\mathbbm{1}_{\cF_c(\varepsilon)} - \mathbbm{1}_{\cF_c}$ is bounded by the sum $\mathbbm{1}_{\cR_1(\varepsilon)}+\mathbbm{1}_{\cR_2(\varepsilon)}+\mathbbm{1}_{\cR_3(\varepsilon)}$ of indicator functions of the sets
\begin{align*}
\cR_1(\varepsilon) &= \{\bm{v} \in \widetilde{X} : \|u_{\bm{v}}^-\|_{\ku^-} < (1+\varepsilon)^{1+\beta_{\chi}} \, c \, \|\bm{v}^+\|^{-\beta_{\chi}}, (1+\varepsilon)^{-1} \leq \|\bm{v}^+\| < 1 \}, \\
\cR_2(\varepsilon) &= \{\bm{v} \in \widetilde{X} : \|u_{\bm{v}}^-\|_{\ku^-} < (1+\varepsilon)^{1+\beta_{\chi}} \, c \, \|\bm{v}^+\|^{-\beta_{\chi}}, e \leq \|\bm{v}^+\| < (1+\varepsilon) e \}, \\
\cR_3(\varepsilon) &= \{\bm{v} \in \widetilde{X} :  c \, \|\bm{v}^+\|^{-\beta_{\chi}} \leq \|u_{\bm{v}}^-\|_{\ku^-} < (1+\varepsilon)^{1+\beta_{\chi}} \, c \, \|\bm{v}^+\|^{-\beta_{\chi}}, 1 \leq \|\bm{v}^+\| < e \}. 
\end{align*} 
In particular, we have $ S_{\chi} f_{\varepsilon,c} - S_{\chi} \mathbbm{1}_{\cF_c} \leq S_{\chi} \mathbbm{1}_{\cR_1(\varepsilon)} + S_{\chi} \mathbbm{1}_{\cR_2(\varepsilon)}+S_{\chi} \mathbbm{1}_{\cR_3(\varepsilon)}$, and it is enough to show that for all $y \geq 1$ sufficiently large in terms of $\varepsilon$, we have, for every $i \in \{1,2,3\}$,
$$
J_i(y) = \int_{K} S_{\chi} \mathbbm{1}_{\cR_i(\varepsilon)}(a(y) k \G) \, d\mu_K(k) = \sum_{\bm{v} \in \cP_{\chi}} \int_{K} \mathbbm{1}_{\cR_i(\varepsilon)}(a(y)k \bm{v}) \, \dd \mu_K(k)\ll \varepsilon.
$$
We start with $J_1(y)$. Using polar coordinates on $\widetilde{X}$ (see Section \ref{sec:Measure}), for every $\bm{v} \in \cP_{\chi}$ there exist $k_{\bm{v}} \in K$ and $y(\bm{v}) \in \R_+^\times$ such that $\bm{v} = k_{\bm{v}} a(y(\bm{v})) \bm{e}_{\chi}$. Taking norms of both sides and recalling that $a(y(\bm{v}))$ acts through the character $\chi$ on $\bm{e}_{\chi}$, we have $\bm{v} = \|\bm{v}\| k_{\bm{v}} \bm{e}_{\chi}$. Now, the right $K$-invariance of $\mu_K$ gives, for every $t \in \R$,
\[
J_1(t) = \sum_{\bm{v} \in \cP_{\chi}} \int_{K} \mathbbm{1}_{\cR_1(\varepsilon)}(a(y) \|\bm{v}\| k \bm{e}_{\chi}) \, \dd \mu_K(k).
\]
Let $\bm{v} \in \cP_{\chi}$ and let us define the two intervals $I_1(\varepsilon) = [(1+\varepsilon)^{-1}, 1)$ and $I_2(\varepsilon) = [0, (1+\varepsilon)^{1+\beta_{\chi}} c)$. We observe that, for every $k \in K$, we have $\mathbbm{1}_{\cR_1(\varepsilon)}(a(y) \|\bm{v}\| k \bm{e}_{\chi}) = 1$ if and only if
\begin{equation} \label{eq:Proof-Prop-Smooth-Approx-1}
\mathbbm{1}_{I_{1}(\varepsilon)} \left( \|(a(y) \|\bm{v}\| k \bm{e}_{\chi})^+\| \right) \mathbbm{1}_{I_{2}(\varepsilon)} \left( \| u_{a(y) k \bm{e}_{\chi}}^- \|_{\ku^-} \, \|(a(y) \|\bm{v}\| k \bm{e}_{\chi})^+\|^{\beta_{\chi}} \right) = 1.
\end{equation}
For every $\bm{v} \in \widetilde{X}$ and $k \in K$, we have $(a(y) \|\bm{v}\| k \bm{e}_{\chi})^+ = y^{-\frac{1}{\beta_{\chi}}}(\|\bm{v}\| k \bm{e}_{\chi})^+$ and $\| u_{a(y) k \bm{e}_{\chi}}^- \|_{\ku^-} = y \| u_{k \bm{e}_{\chi}}^- \|_{\ku^-}$ (see Equation \eqref{eq:LieAlgebraDiagonal}). Moreover, we have $\| (\|\bm{v}\| k \bm{e}_{\chi})^+ \| = \|\bm{v} \| \, | \langle k \bm{e}_{\chi}, \bm{e}_{\chi} \rangle |$, since $\bm{e}_{\chi}$ is unitary. Hence, \eqref{eq:Proof-Prop-Smooth-Approx-1} holds if and only if
\[
\mathbbm{1}_{I_{1}(\varepsilon)} \left(y^{-\frac{1}{\beta_{\chi}}}\|\bm{v} \| \, | \langle k \bm{e}_{\chi}, \bm{e}_{\chi} \rangle | \right) \mathbbm{1}_{I_{2}(\varepsilon)} \left( \| u_{ k \bm{e}_{\chi}}^- \|_{\ku^-} \, (\|\bm{v} \| \, | \langle k \bm{e}_{\chi}, \bm{e}_{\chi} \rangle |)^{\beta_{\chi}} \right) = 1.
\]
By the definition of $\cR_1(\varepsilon)$, there exists an absolute constant $\widehat{C} > 1$ such that for every $\bm{v} \in \cR_1(\varepsilon)$, we have $\|u_{\bm{v}}^-\|_{\ku^-} < \widehat{C}$. In particular, if $[\cR_1(\varepsilon)]$ denotes the corresponding set in $X$, then $[\cR_1(\varepsilon)] \subset B_{\ku^-}(\widehat{C}) \, x_0$. This implies that there exists a small constant $\widehat{c} > 0$ such that the region $\cR_1(\varepsilon)$ is contained in the set $\mathcal{C} = \{\bm{v} \in \widetilde{X} : \|\bm{v}^+ \| \geq \widehat{c} \, \|\bm{v}\| \}$. For every $y \geq 1$, the set $\mathcal{C}$ is stable under the action of $a(y)$ and for every $\bm{v} \in \mathcal{C}$, written as $\bm{v} = \|\bm{v}\| k \bm{e}_{\chi}$ as above, we have $|\langle k \bm{e}_{\chi}, \bm{e}_{\chi} \rangle | \geq \widehat{c}$. Therefore, letting $K(\widehat{c}) = \{k \in K : |\langle k \bm{e}_{\chi}, \bm{e}_{\chi} \rangle| \geq \widehat{c}\}$, we have that $J_1(t)$ is given by
\begin{align*}
\sum_{\bm{v} \in \cP_{\chi}}
\int_{K(\widehat{c})}
\mathbbm{1}_{I_{1}(\varepsilon)} \left( y^{-\frac{1}{\beta_{\chi}}} \|\bm{v} \| | \langle k \bm{e}_{\chi}, \bm{e}_{\chi} \rangle | \right) 
\mathbbm{1}_{I_{2}(\varepsilon)} \left( \| u_{ k \bm{e}_{\chi}}^- \|_{\ku^-} (\|\bm{v} \| | \langle k \bm{e}_{\chi}, \bm{e}_{\chi} \rangle | )^{\beta_{\chi}} \right) \, \dd \mu_K(k). 
\end{align*}
Let us write $J_1(y) = B_1(y) + B_2(y)$, where $B_1(y)$ is given by
\begin{align*}
\sum_{\bm{v} \in \cP_{\chi}}
\int_{K((1+\varepsilon)^{-1})}
\mathbbm{1}_{I_{1}(\varepsilon)} \left( y^{-\frac{1}{\beta_{\chi}}} \|\bm{v} \| | \langle k \bm{e}_{\chi}, \bm{e}_{\chi} \rangle | \right) 
\mathbbm{1}_{I_{2}(\varepsilon)} \left( \| u_{ k \bm{e}_{\chi}}^- \|_{\ku^-} (\|\bm{v} \| |\langle k \bm{e}_{\chi}, \bm{e}_{\chi} \rangle|)^{\beta_{\chi}} \right) d \mu_K(k). 
\end{align*}
and $B_2(y) = J_1(y) - B_1(y)$. Consequently, for every $k \in K$ satisfying that $|\langle k \bm{e}_{\chi}, \bm{e}_{\chi} \rangle | \geq (1 + \varepsilon)^{-1}$, if $\mathbbm{1}_{I_{1}(\varepsilon)} \left( y^{-\frac{1}{\beta_{\chi}}} \|\bm{v} \| |\langle k \bm{e}_{\chi}, \bm{e}_{\chi} \rangle | \right) = 1$, then
\[
(1+\varepsilon)^{-1} y^{\frac{1}{\beta_{\chi}}} \leq \|\bm{v} \| \leq (1 + \varepsilon) y^{\frac{1}{\beta_{\chi}}} 
\]
Hence, if also $\mathbbm{1}_{I_{2}(\varepsilon)} \left( \| u_{ k \bm{e}_{\chi}}^{-} \|_{\ku^-} (\| \bm{v} \| | \langle k \bm{e}_{\chi}, \bm{e}_{\chi} \rangle| )^{\beta_{\chi}} \right) = 1$, then
\[
\| u_{ k \bm{e}_{\chi}}^- \|_{\ku^-} \ll y^{-1}. 
\] 
The latter implies that we have $d(x_0, k x_0) \ll y^{-1}$. Together this gives, 
\begin{align*}
B_1(y) &\ll \sum_{\bm{v} \in \cP_{\chi}, \, \|\bm{v} \| = y^{\frac{1}{\beta_{\chi}}} + O(\varepsilon y^{\frac{1}{\beta_{\chi}}})} \int_{X} \mathbbm{1}_{B_X(y^{-1})}(x) \, d\sigma_X(x) \\
&\ll \sum_{\bm{v} \in \cP_{\chi}, \, \|\bm{v} \| = y^{\frac{1}{\beta_{\chi}}} + O(\varepsilon y^{\frac{1}{\beta_{\chi}}})} y^{- d}.
\end{align*}
By \cite[Theorem~C]{Pfitscher24}, there exist constants $\varkappa_1 > 0$ and $\varepsilon_1 > 0$ such that 
\[
\#\{\bm{v} \in \cP_{\chi} : \|\bm{v}\| \leq T\} = \varkappa_1 \, T^{\beta_\chi d}(1 + O(T^{-\varepsilon_1})),
\]
and consequently
\[
\#\{\bm{v} \in \cP_{\chi} : (1+\varepsilon)^{-1} y^{\frac{1}{\beta_{\chi}}} \leq \|\bm{v}\| \leq (1 + \varepsilon) y^{\frac{1}{\beta_{\chi}}}\} = 2 \, \beta_{\chi} \, d \, \varkappa_1 \, \varepsilon \, y^{d} + O(\varepsilon^2 y^{d} + y^{- \frac{\varepsilon_1}{\beta_{\chi}}} y^{ d}).
\]
Hence we get $B_1(y) \ll \beta_{\chi} \, \varepsilon$ for all $y \geq  (\beta_{\chi} \, \varepsilon)^{- \tfrac{\beta_{\chi}}{\varepsilon_1} }$.  

Let us now bound the term $B_2(y)$, where we integrate over all $k \in K$ such that $\widehat{c} \leq |\langle k \bm{e}_{\chi}, \bm{e}_{\chi} \rangle| \leq (1 +  \varepsilon)^{-1}$. The vector $(k\bm{e}_{\chi})^{\perp} = k\bm{e}_{\chi} - \langle k\bm{e}_{\chi}, \bm{e}_{\chi} \rangle$ satisfies 
\[
\|u_{k \bm{e}_{\chi}} \|_{\ku^-} \asymp d(k x_0, x_0) \asymp \|(k\bm{e}_{\chi})^{\perp} \| \geq 1 - |\langle k\bm{e}_{\chi}, \bm{e}_{\chi} \rangle | \gg \varepsilon.
\]
In particular, there exists $\delta_0 > 0$ such that $\|u_{k \bm{e}_{\chi}} \|_{\ku^-} | \langle k \bm{e}_{\chi} , \bm{e}_{\chi} \rangle |^{\beta_{\chi}} \geq \delta_0 \varepsilon$ is bounded away from zero. Thus, if $\mathbbm{1}_{I_{2}(\varepsilon)} \left( \| u_{ k \bm{e}_{\chi}}^{-} \|_{\ku^-} (\| \bm{v} \| | \langle k \bm{e}_{\chi}, \bm{e}_{\chi} \rangle| )^{\beta_{\chi}} \right) = 1$, then $\mathbbm{1}_{I_{2}(\varepsilon)} \left( \delta_0 \varepsilon \|\bm{v} \|^{\beta_{\chi}} \right) = 1$ and hence 
\begin{equation}\label{eq:longcomplicatedterm-1}
\|\bm{v}\|^{\beta_{\chi}} < (1+\varepsilon)^{1+\beta_{\chi}} c (\delta_0 \varepsilon)^{-1}.  
\end{equation}
Therefore, for every $y\geq 1$, the non-negative term $B_2(y)$ is bounded from above by
\begin{equation} \label{eq:longcomplicatedterm}
\sum_{\substack{\bm{v} \in \cP_{\chi} \\ \|\bm{v}\|^{\beta_{\chi}} < (1+\varepsilon)^{1+\beta_{\chi}} c (\delta_0 \varepsilon)^{-1}}} \int_{K((1+\varepsilon)^{-1}) \smallsetminus K(\widehat{c})} \mathbbm{1}_{I_{1}(\varepsilon)} \left( y^{-\frac{1}{\beta_{\chi}}} \|\bm{v} \| | \langle k \bm{e}_{\chi}, \bm{e}_{\chi} \rangle | \right) \, \dd \mu_K(k).
\end{equation}
We claim that, for all $y \geq (1+\varepsilon)^{1+\beta_{\chi}} c (\delta_0 \varepsilon)^{-1}$, the expression \eqref{eq:longcomplicatedterm} is zero. In fact, by \eqref{eq:longcomplicatedterm-1} and since $\widehat{c} \leq |\langle k \bm{e}_{\chi}, \bm{e}_{\chi} \rangle| \leq (1 +  \varepsilon)^{-1}$, we have 
\[
y^{-\frac{1}{\beta_{\chi}}} \|\bm{v} \| | \langle k \bm{e}_{\chi}, \bm{e}_{\chi} \rangle | \leq y^{-\frac{1}{\beta_{\chi}}} \left ( (1+ \varepsilon)^{1 + \beta_{\chi}} c (\delta_0 \varepsilon)^{-1} \right )^{\frac{1}{\beta_{\chi}}} (1+ \varepsilon)^{-1}
\]
and, for all $y \geq (1+\varepsilon)^{1+\beta_{\chi}} c (\delta_0 \varepsilon)^{-1}$, the right-hand side is smaller than $(1+\varepsilon)^{-1}$ (and hence $\mathbbm{1}_{I_{1}(\varepsilon)} \left( y^{-\frac{1}{\beta_{\chi}}} \|\bm{v} \| | \langle k \bm{e}_{\chi}, \bm{e}_{\chi} \rangle | \right) = 0$).

The calculations for $J_2(y)$ and $J_3(y)$ are essentially analogous to that for $J_1(t)$ and we omit the details. 
\end{proof}

\subsection{Proof of Theorem \ref{thm:Critical}} \label{sec:CountingCritical}
We first generalize \cite[Lemma 1.4]{Harman98}, a useful tool in the theory of metric Diophantine approximation for deriving effective counting statements from an $L^2$-bound, to obtain such effective statements from an $L^p$-bound for arbitrary $p \in (1,2]$. The idea of the proof goes back to the work of H. Weyl \cite{Weyl16} on the equidistribution of numbers modulo one. 
\begin{lemma} \label{lem:Weyl-Metric}
Let $(Y, \nu)$ be a probability space and let $(\phi_{i,\ell} : Y \rightarrow \R)_{i,\ell \in \N^*}$ be a family of non-negative random variables. Let $C_1 > 1$ be a constant and $(\overline{\phi}_{i,\ell})_{i,\ell \in \N^*}$ and  $(\overline{\phi}_{i})_{i \in \N^*}$ be families of real numbers satisfying, for all $i,\ell \in \N^*$, $0 \leq \overline{\phi}_{i,\ell} \leq \overline{\phi}_{i} \leq C_1$, and put $Z_{i,\ell} = \phi_{i,\ell} - \overline{\phi}_{i,\ell}$. Assume that $\sum_{i = 1}^{\infty} \overline{\phi}_i = +\infty $ and that for some $p \in (1,2]$ and $C_2 > 0$, we have 
\begin{equation} \label{eq:1:lem:From_L_p}
\forall \, N \in \N^*, \forall \, \ell \in \N^*, \quad \int_{Y} \Big | \sum_{i = 1}^N Z_{i,\ell}(y) \Big |^p \, d \nu(y) \leq C_2 \sum_{i = 1}^N \overline{\phi}_i. 
\end{equation}
Let $\varepsilon > 0$ and let $(\ell_N)_{N \geq 1}$ be a sequence of positive integers. Then there is a constant $C_3 > 0$ such that almost surely
\begin{equation} \label{eq:2:lem:From_L_p}
\forall \, N \in \N^*, \quad \left | \sum_{i = 1}^{N} Z_{i,\ell_N} \right| \leq C_3 \left ( \sum_{i = 1}^N \overline{\phi}_i \right )^{\frac{2}{p+1} + \varepsilon}
\end{equation}
\end{lemma}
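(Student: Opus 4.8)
```
The plan is to follow the classical Erdős–Gál / Weyl variance argument that
underlies \cite[Lemma~1.4]{Harman98}, but replace the second moment by the
$p$-th moment for $p \in (1,2]$. First I would set
$S_N = \sum_{i=1}^N \overline{\phi}_i$, which tends to $+\infty$ by hypothesis,
and, for a parameter $\rho > 1$ to be chosen, introduce the subsequence
$N_k = \min\{ N : S_N \geq \rho^k \}$. Along this subsequence the hypothesis
\eqref{eq:1:lem:From_L_p} together with the Markov (Chebyshev) inequality gives,
for any $\eta > 0$,
\[
\nu\Bigl( \bigl| \textstyle\sum_{i=1}^{N_k} Z_{i,\ell_{N_k}} \bigr| > S_{N_k}^{\frac{1}{p} + \eta} \Bigr)
\;\leq\; \frac{C_2 \, S_{N_k}}{S_{N_k}^{1 + p\eta}} \;=\; C_2 \, S_{N_k}^{-p\eta}
\;\leq\; C_2 \, \rho^{-k p \eta},
\]
which is summable in $k$. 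By Borel--Cantelli, almost surely
$\bigl| \sum_{i=1}^{N_k} Z_{i,\ell_{N_k}} \bigr| \ll S_{N_k}^{\frac1p + \eta}$
for all large $k$. The subtlety here is that the truncation level
$\ell_{N_k}$ changes with $k$, but this causes no problem: for each fixed $k$
the random variable $\sum_{i=1}^{N_k} Z_{i,\ell_{N_k}}$ is a single function on
$Y$, and the Borel--Cantelli argument only requires summability of the
measures of the exceptional events, one event per $k$.

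The second step is the gap-filling between consecutive $N_k$, and this is where
the exponent $\tfrac{2}{p+1}$ (rather than $\tfrac1p$) is forced on us. For
$N_k \leq N < N_{k+1}$ I would like to compare $\sum_{i=1}^N Z_{i,\ell_N}$ with
$\sum_{i=1}^{N_k} Z_{i,\ell_{N_k}}$, but here one must be careful because both
the upper limit and the truncation index differ. The clean way is to bound, for
$N$ in the block $[N_k, N_{k+1})$,
\[
\Bigl| \textstyle\sum_{i=1}^{N} Z_{i,\ell_N} \Bigr|
\;\leq\; \max_{N_k \leq M < N_{k+1}} \Bigl| \textstyle\sum_{i=1}^{M} Z_{i,\ell_N} \Bigr|,
\]
and to control the maximum over the block by a further $p$-th moment /
Borel--Cantelli estimate applied to the ``block sums''
$\sum_{i=N_k+1}^{M} Z_{i,\ell}$: by \eqref{eq:1:lem:From_L_p} applied twice
(to $\sum_{i=1}^{M}$ and to $\sum_{i=1}^{N_k}$) and the triangle inequality in
$L^p$, the $p$-th moment of such a block sum is
$\ll S_{N_{k+1}} - S_{N_k} + S_{N_k} \ll \rho^{k+1}$, uniformly in $M$ and
$\ell$, so that $\nu(\max_{M,\ell} |{\cdot}| > \rho^{(k+1)/p + \eta'})$ is
summable over $k$ once we note there are only finitely many relevant pairs
$(M,\ell)$ if the $\ell_N$ are, say, eventually constant on long blocks; in the
general case one absorbs this into a union bound that is still summable after
shrinking $\eta'$. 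Combining the on-sequence bound with the block bound and
using $S_N \asymp \rho^k \asymp S_{N_k}$ for $N$ in the $k$-th block yields,
almost surely,
\[
\Bigl| \textstyle\sum_{i=1}^N Z_{i,\ell_N} \Bigr|
\;\ll\; S_N^{\frac1p + \eta} \,+\, S_N^{\frac1p}\,(\log S_N)^{?}
\]
type estimates; optimizing the choice of $\rho$ (one takes $\rho$ close to $1$)
and, crucially, reorganizing the dyadic blocking so that the worst case is a
\emph{square-root-of-a-square-root} cancellation rather than a square root,
produces the exponent $\tfrac{2}{p+1}$: heuristically, summing $N^{1/p}$-sized
fluctuations over $\log N$ nearly independent blocks of geometrically growing
length gives a total of order $S_N^{1/p} \cdot S_N^{\frac12(1-\frac1p)}
= S_N^{\frac{1}{2} + \frac{1}{2p}} = S_N^{\frac{p+1}{2p}}$, which for
$p \in (1,2]$ is worse than $S_N^{2/(p+1)}$ only by the $\varepsilon$ we are
allowed, after one more round of the same dyadic trick. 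I would present this
second round explicitly: apply the just-proved estimate with $p$ replaced by a
slightly larger effective exponent obtained from interpolating the block bounds,
iterate once, and collect the geometric series of error exponents, which
converges to $\tfrac{2}{p+1}$.

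I expect the main obstacle to be bookkeeping the dependence on the varying
truncation sequence $(\ell_N)_N$ through the Borel--Cantelli steps: one must
phrase every moment bound in \eqref{eq:1:lem:From_L_p} uniformly in $\ell$ (which
is exactly how the hypothesis is stated, so this is available) and make sure
that each application of Borel--Cantelli is along a countable index set with
summable exceptional measures. The other delicate point is to get the
\emph{exact} exponent $\tfrac{2}{p+1}$ rather than the cruder
$\tfrac{p+1}{2p}$; this requires the two-step dyadic blocking (blocks for the
Borel--Cantelli sequence, then sub-blocks for the maximal inequality) together
with the observation that one may feed the conclusion back as an improved input,
and that the resulting sequence of exponents
$e_0 = \tfrac1p,\ e_{n+1} = \tfrac12 + \tfrac12 e_n$ converges monotonically to
$1$ while the \emph{error terms} accumulate to exponent $\tfrac{2}{p+1}$ — I
would verify this fixed-point computation carefully, since it is the heart of
the quantitative improvement over the naive $L^p$-Chebyshev bound. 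Everything
else (the $L^p$ triangle inequality, Markov's inequality, and the elementary
estimate $S_{N_{k+1}} \leq \rho\, S_{N_k} + C_1$ coming from $\overline{\phi}_i \leq C_1$)
is routine.
```
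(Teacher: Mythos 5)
Your proposal diverges from the paper's proof in a way that opens a real gap. You block geometrically ($S_{N_k}\ge\rho^k$) and propose to fill the gaps between consecutive $N_k$ by a maximal-inequality/union-bound argument plus a second Borel--Cantelli. But the union bound in each block runs over the $N_{k+1}-N_k$ intermediate indices $M$, and since $\overline{\phi}_i\le C_1$ you only know $N_{k+1}-N_k\gtrsim (S_{N_{k+1}}-S_{N_k})/C_1\asymp\rho^k$; a union of $\asymp\rho^k$ events each of probability $\asymp\rho^{-kp\eta'}$ is summable only when $\eta'>1/p$, which pushes the exponent above $2/p\ge 1$ and makes the bound trivial. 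The iteration you sketch does not rescue this: the recursion $e_{n+1}=\tfrac12+\tfrac12 e_n$ converges to $1$, not to $\tfrac{2}{p+1}$, and the claim that $\tfrac{p+1}{2p}$ is ``an $\varepsilon$ away'' from $\tfrac{2}{p+1}$ is quantitatively wrong (at $p=2$ the gap is $\tfrac34-\tfrac23=\tfrac{1}{12}$, not an $\varepsilon$).

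The paper avoids any maximal inequality by exploiting non-negativity, which is the crux you are missing. Because the $\phi_{i,\ell}$ are non-negative, the partial sums $\Psi(N,y)=\sum_{i=1}^N\phi_{i,\ell_N}(y)$ are monotone in $N$ (modulo the monotonicity in $\ell$ that holds in the application), so for $N_{k-1}\le N<N_k$ one can sandwich $\Psi(N_{k-1},y)\le\Psi(N,y)\le\Psi(N_k,y)$; the interpolation error is then the \emph{deterministic} quantity $\Psi(N_k)-\Psi(N_{k-1})\le\Phi(N_k)-\Phi(N_{k-1})$, no second Borel--Cantelli required. Combined with \emph{polynomial} blocking $\Phi(N_k)>k^{px-1}$ (not geometric — geometric blocking makes the increment $\Phi(N_{k+1})-\Phi(N_k)\asymp\Phi(N_k)$, which is useless here), the deterministic gap error is $\asymp k^{px-2}=o(\Phi(N_k))$, while the on-sequence Chebyshev/Borel--Cantelli error is $k^{x+\varepsilon}$. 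Balancing these by choosing $x=\tfrac{2+\varepsilon}{p-1}$ gives the exponent $\tfrac{2}{p+1}+O(\varepsilon)$ directly, without iteration. In short: your step 1 is fine, but your gap-filling mechanism does not close, and the paper's mechanism — monotonicity of non-negative partial sums plus polynomial blocking — is what actually produces the exponent.
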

\begin{proof}
Let $\varepsilon > 0$ and let $(\ell_N)_{N \geq 1}$ be a sequence of positive integers. For every $N \in \N^*$ and $y \in Y$, we define 
\begin{align*}
\Psi(N,y) &= \sum_{i = 1}^N \phi_{i,\ell_N}(y), \quad \Psi(N) = \sum_{i = 1}^N \overline{\phi}_{i,\ell_N}, \quad \Phi(N) = \sum_{i = 1}^N \overline{\phi}_i, \\ E(N,y) &= \Psi(N,y) - \Psi(N) = \sum_{i=1}^N Z_{i,\ell_N}(y).
\end{align*}
Let $x > 1$. For every $k \in \N^*$, let $N_k \in \N^*$ be the smallest integer with  
\begin{equation} \label{eq:Phi(N_k)}
\Phi(N_k) > k^{px - 1}.
\end{equation}
We remark that the sequence $(N_k)_{k \in \N^*}$ is increasing. Next, for every $k \in \N^*$, we define the subset $A_k$ of $Y$ by
\[
A_k = \{y \in Y : | E(N_k , y) | >  k^{x +\varepsilon}\}.
\]
By Chebyshev's inequality and our assumption \eqref{eq:1:lem:From_L_p}, we have
\begin{align*}
\nu(A_k) &\leq \frac{1}{k^{px + p \varepsilon}} \int_{Y} | E(N_k , y) |^p \, d \nu(y) \leq \frac{C_2 k^{px - 1} + C_1}{k^{px + p \varepsilon}} \ll k^{-1-p\varepsilon}.
\end{align*}
Hence $\sum_{k = 1}^{\infty} \nu(A_k)$ converges. By the Borel-Cantelli lemma, we have, for almost every $y \in Y$, that there exists an integer $k(y) \in \N^*$ such that for all $k \geq k(y)$, we have
\[
| E(N_k , y) | \leq  k^{x +\varepsilon} 
\]
and also, by \eqref{eq:Phi(N_k)}, 
\[
k^{x +\varepsilon} \ll_x (k-1)^{x +\varepsilon} \ll \Phi(N_{k-1})^{\frac{x+\varepsilon}{px-1}}.
\]
Now, for an arbitrary $N \in \N^*$ there exists $k \in \N^*$ such that $N_{k-1} \leq N < N_k$ and 
\[
\Psi(N_{k-1}, y) \leq \Psi(N, y) \leq \Psi(N_{k}, y). 
\]
Thus, for almost every $y \in Y$, there exists an integer $k(y) \in \N^*$ such that for all $k \geq k(y)$, we have
\begin{equation} \label{eq:Exponent_A}
\Psi(N, y) = \Psi(N) + O\left (\Psi(N_{k}) - \Psi(N_{k-1}) + \Phi(N_{k-1})^{\frac{x+\varepsilon}{px-1}} \right ),
\end{equation}
Next, we note that, for all $k \in \N^*$, we have
\begin{equation} \label{eq:Exponent_B}
\Psi(N_{k}) - \Psi(N_{k-1}) \leq \Phi(N_{k}) - \Phi(N_{k-1}) \ll k^{px-2} \ll \Phi(N_{k-1})^{\frac{px-2}{px-1}}.
\end{equation}
We put $x = \frac{2+\varepsilon}{p-1}$, so that the exponents in \eqref{eq:Exponent_A} and \eqref{eq:Exponent_B} match.
Plugging this back into $\frac{x+\varepsilon}{px-1}$, we find that
\[
\frac{\frac{2+\varepsilon}{p-1}+\varepsilon}{p\frac{2+\varepsilon}{p-1}-1} = \frac{2 + \varepsilon(2-p)}{p + 1 + p \varepsilon} \leq \frac{2 + \varepsilon(2-p)}{p + 1} \leq \frac{2}{p + 1} + \varepsilon.
\]
The proof of Lemma \ref{lem:Weyl-Metric} is complete. 
\end{proof}

\subsection{Upper bound estimate}
We recall that, for all $\ell \in \N^*$, we defined 
\[
\widehat{c}_{\ell} = \left ( 1 + C_0 \, \ell^{-\beta_{\chi}}  \right )^{-(1+\beta_{\chi})} \in (0,1)
\]
and we let, for every $c \in [1/2, 3/2]$, 
\[
\cF_c = \{\bm{v} \in \widetilde{X} : \|u_{\bm{v}}^-\|_{\ku^-} < c  \|\bm{v}^+\|^{-\beta_{\chi}}, 1 \leq \|\bm{v}^+\| < e \}.
\]
We also defined, for every $i \in \N$, the times $y_i = e^{\beta_{\chi} i}$ and recall from Section \ref{sec:Approx_E_T} that there exists a constant $C_0 > 0$ such that for every $T' \geq 1$ and $x \in X$, we have the following lower and upper bounds on the lattice point counting function $\# \left ( k_x^{-1} \cP_{\chi}  \cap \cE_{\beta_\chi}(e^{T'}) \right )$: for all large enough $\ell \in \N^*$, we have
\begin{align} \label{eq:Sandwich_lem:Reduction_Birkhoof_2}
\sum_{i=0}^{\lfloor T' + \ln \widehat{c}_\ell \rfloor -1} S_{\chi} \mathbbm{1}_{\cF_{c_{\ell}}} (a(y_i) k_x^{-1} \G) - &C_0 \, \ell^{\beta_{\chi} d} \leq \# \left ( k_x^{-1} \cP_{\chi}  \cap \cE_{\beta_\chi}(e^{T'}) \right ) \notag \\ 
&\leq \sum_{i=0}^{\lceil T' - \ln \widehat{c}_{\ell} \rceil - 1} S_{\chi} \mathbbm{1}_{\cF_{\widehat{c}_{\ell}^{-1}}} (a(y_i) k_x^{-1} \G) + C_0 \, \ell^{\beta_{\chi} d}.
\end{align}
The proof of Theorem \ref{thm:Critical} consists of effectively estimating the left- and right-hand sides of \eqref{eq:Sandwich_lem:Reduction_Birkhoof_2}. We start with the latter and, by symmetry, omit the lower bound estimates. 
\begin{proposition} \label{prop:Upper_Bound_on_Lattice_Count}
There exist $\delta' \in (0,1)$ and $\varepsilon \in (0,1)$ such that the following holds. Let $\varkappa_1 = \lambda_{\widetilde{X}}(\cF)$ be the volume of $\cF = \cF_1$ and, for every $N \in \N^*$, let $\ell_N = \lfloor N^{\delta'}\rfloor$. Then for almost every $k \in K$ and for all large enough $N \in \N^*$, we have
\begin{equation}
\sum_{i=0}^{N-1} S_{\chi} \mathbbm{1}_{\cF_{\widehat{c}_{\ell_N}^{-1}}} (a(y_i) k \G) + C_0 \, \ell_N^{\beta_{\chi} d}= \varkappa_1 \, N + O_x(N^{1-\varepsilon}). 
\end{equation}
\end{proposition}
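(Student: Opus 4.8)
The plan is to deduce the statement from the metric estimate of Lemma~\ref{lem:Weyl-Metric}, applied on the probability space $(K,\mu_K)$ to the non-negative random variables $\phi_{i,\ell}(k) = S_{\chi}\mathbbm{1}_{\cF_{\widehat{c}_{\ell}^{-1}}}(a(y_i)k\G)$, with $\overline{\phi}_{i,\ell} = \lambda_{\widetilde{X}}(\cF_{\widehat{c}_{\ell}^{-1}})$ and $\overline{\phi}_i = \sup_{\ell\ge1}\lambda_{\widetilde{X}}(\cF_{\widehat{c}_{\ell}^{-1}})$; the supremum is finite since all $\cF_{\widehat{c}_{\ell}^{-1}}$ lie in a fixed bounded subset of $\widetilde{X}$, it is positive since $\cF = \cF_1$ has positive volume, and $\sum_i\overline{\phi}_i = +\infty$. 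Only the tail of the sequence $(\ell_N)$ with $\ell_N = \lfloor N^{\delta'}\rfloor$ will matter, so after a harmless shift of $\ell$ we may assume $\widehat{c}_{\ell}^{-1}\in[1,3/2]$ for all $\ell$, which places us in the range of validity of the uniform-in-$c$ estimates of Lemmas~\ref{lem:Replacement_for_alpha_s} and \ref{prop:SmoothApproximationOrbit}. Choosing $\delta'\in(0,1/(\beta_{\chi}d))$ and using that $\widehat{c}_{\ell}^{-1} = 1 + O(\ell^{-\beta_{\chi}})$ together with Lipschitz continuity of $c\mapsto\lambda_{\widetilde{X}}(\cF_c)$ at $c=1$, the conclusion of Lemma~\ref{lem:Weyl-Metric} (for every $\varepsilon''>0$, almost surely $\bigl|\sum_{i=1}^N Z_{i,\ell_N}\bigr|\ll N^{2/(p+1)+\varepsilon''}$) combines with $C_0\ell_N^{\beta_{\chi}d} = O(N^{\delta'\beta_{\chi}d})$ and $N\lambda_{\widetilde{X}}(\cF_{\widehat{c}_{\ell_N}^{-1}}) = \varkappa_1 N + O(N^{1-\delta'\beta_{\chi}})$ to yield the proposition, with $\varepsilon = \min\{\tfrac{p-1}{p+1}-\varepsilon'',\ \delta'\beta_{\chi},\ 1-\delta'\beta_{\chi}d\}>0$ once a fixed $p>1$ is chosen below. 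Thus the real task is to verify hypothesis~\eqref{eq:1:lem:From_L_p}: for some $p\in(1,2]$, uniformly in $N$ and in $c\in[1,3/2]$,
\[
\int_K\Bigl|\sum_{i=1}^N\bigl(S_{\chi}\mathbbm{1}_{\cF_c}(a(y_i)k\G) - \lambda_{\widetilde{X}}(\cF_c)\bigr)\Bigr|^p\dd\mu_K(k) \ll N.
\]

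To prove this I would fix $\varepsilon_0\in(0,\varepsilon)$, where $\varepsilon$ is the exponent of Theorem~\ref{thm:L1} (so $S_{\chi}$ maps $B_{c}^{\infty}(\widetilde{X})$ into $L^{1+\varepsilon}(\Omega)$), introduce scales $\delta_N = N^{-\rho_1}$ and $\varepsilon_N = N^{-\rho_2}$ (with $\rho_1,\rho_2>0$ fixed at the end), and set $\phi_N = D_{\delta_N}\cdot S_{\chi}f_{\varepsilon_N,c}\in C_c^{\infty}(\Omega)$, where $D_{\delta_N}$ is a smooth cut-off as in Lemma~\ref{lem:Truncated-Siegel} and $f_{\varepsilon_N,c}$ an approximant of $\mathbbm{1}_{\cF_c}$ as in~\eqref{eq:EpsilonApproximationChi}; combining Lemma~\ref{lem:Upper-Bound-Siegel} (bounding $S_{\chi}f_{\varepsilon_N,c}$ on $\{\lambda_{\chi}\ge\xi^{-1}\delta_N\}$ by $\ll\delta_N^{-\beta_{\chi}d}$), a count of the non-vanishing summands, and the Leibniz rule gives $\cS_r(\phi_N)\ll\delta_N^{-\beta_{\chi}d}\varepsilon_N^{-r}$. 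I would then split, for each $i$,
\begin{multline*}
S_{\chi}\mathbbm{1}_{\cF_c}(a(y_i)k\G) - \lambda_{\widetilde{X}}(\cF_c) = \bigl(\phi_N(a(y_i)k\G) - m_N\bigr) - S_{\chi}(f_{\varepsilon_N,c}-\mathbbm{1}_{\cF_c})(a(y_i)k\G) \\ + \bigl(1 - D_{\delta_N}(a(y_i)k\G)\bigr)S_{\chi}f_{\varepsilon_N,c}(a(y_i)k\G) + \bigl(m_N - \lambda_{\widetilde{X}}(\cF_c)\bigr),
\end{multline*}
with $m_N = \int_{\Omega}\phi_N\dd\mu_{\Omega}$, so that by the triangle inequality in $L^p(K)$ it suffices to bound the $i$-sum of each of the four pieces. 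For the main piece I would pass from $L^p$ to $L^2$ (Lyapunov's inequality, $\mu_K$ being a probability measure), expand the square, and apply the single and double equidistribution estimates \eqref{eq:Single-Eq} and \eqref{eq:Double-Eq} of Theorem~\ref{thm:Effective} with $f\equiv1$ and base point $e\G$; since $y_i = e^{\beta_{\chi}i}$ gives $\sum_{1\le i<j\le N}\min\{y_i,y_j/y_i\}^{-c'}\ll N$, the second moment comes out $\ll N\cS_r(\phi_N)^2$, hence the $p$-th moment is $\ll N^{p/2}\delta_N^{-p\beta_{\chi}d}\varepsilon_N^{-pr}$. For the cuspidal piece $(1-D_{\delta_N})S_{\chi}f_{\varepsilon_N,c}$, supported where $\lambda_{\chi}<\xi\delta_N$, I would apply Hölder with exponents $\tfrac{1+\varepsilon_0}{p}$ and its conjugate, bound the $L^{1+\varepsilon_0}(K)$-factor by Lemma~\ref{lem:Replacement_for_alpha_s} and the measure factor by Lemma~\ref{prop:Non-Escape} (valid once $y_i\gtrsim\delta_N^{-\kappa}$, i.e.\ for all but $O(\ln N)$ of the indices, the rest handled directly by Lemma~\ref{lem:Replacement_for_alpha_s}), getting $L^p(K)$-norm $\ll\ln N + N\delta_N^{\theta_1}$ with $\theta_1 = \tfrac{\beta_{\chi}d}{p}(1-\tfrac{p}{1+\varepsilon_0})>0$. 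For the smoothing piece, since $0\le f_{\varepsilon_N,c}-\mathbbm{1}_{\cF_c}\le\mathbbm{1}_{\cF_c(\varepsilon_N)}$, I would interpolate the $L^1(K)$-estimate of Lemma~\ref{prop:SmoothApproximationOrbit} (again for $y_i$ large) against the uniform $L^{1+\varepsilon_0}(K)$-bound of Lemma~\ref{lem:Replacement_for_alpha_s}, getting $L^p(K)$-norm $\ll\ln N + N\varepsilon_N^{1-\theta}$ for a fixed $\theta\in(0,1)$. For the deterministic piece I would use $|m_N-\lambda_{\widetilde{X}}(\cF_c)|\le\|f_{\varepsilon_N,c}-\mathbbm{1}_{\cF_c}\|_{L^1(\widetilde{X})} + \mu_{\Omega}(\Omega_{\xi\delta_N})^{\varepsilon_0/(1+\varepsilon_0)}\|S_{\chi}\mathbbm{1}_{\cF_c}\|_{L^{1+\varepsilon_0}(\Omega)}\ll\varepsilon_N + \delta_N^{\theta_2}$, $\theta_2 = \tfrac{\beta_{\chi}d\varepsilon_0}{1+\varepsilon_0}$, by \eqref{eq:EpsilonApproximationChi} and \eqref{eq:Measure-Cusp-Saxce}, so its $i$-sum is $\le N(\varepsilon_N+\delta_N^{\theta_2})$.

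Collecting the four bounds, \eqref{eq:1:lem:From_L_p} holds as soon as $N^{p/2}\delta_N^{-p\beta_{\chi}d}\varepsilon_N^{-pr}\ll N$ together with $N\delta_N^{\theta_1},\ N\delta_N^{\theta_2},\ N\varepsilon_N^{1-\theta},\ N\varepsilon_N\ll N^{1/p}$, i.e.\ as soon as $p\beta_{\chi}d\,\rho_1 + pr\,\rho_2\le 1-\tfrac{p}{2}$ while $\rho_1\ge\tfrac{p-1}{p\min\{\theta_1,\theta_2\}}$ and $\rho_2\ge\tfrac{p-1}{p(1-\theta)}$. Choosing $\rho_1,\rho_2$ equal to these lower bounds, the left-hand side of the first inequality equals $(p-1)\bigl(\tfrac{\beta_{\chi}d}{\min\{\theta_1,\theta_2\}} + \tfrac{r}{1-\theta}\bigr)$, which tends to $0$ as $p\to1^+$ with $\varepsilon_0$ fixed, whereas $1-\tfrac{p}{2}\to\tfrac12$; hence the inequality is met by any $p\in(1,1+\varepsilon_0)$ sufficiently close to $1$, and fixing such a $p$ completes the argument. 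I expect the main obstacle — the reason this whole apparatus is needed — to be the control of the part of the expanding $K$-orbit entering the cusp, where $S_{\chi}\mathbbm{1}_{\cF_c}$ is unbounded and only $L^{1+\varepsilon}$-integrable: no plain second-moment argument is available, and the difficulty is to balance the truncation scale $\delta_N$ (small enough that the cuspidal and approximation errors beat $N^{1/p}$) against the blow-up of $\cS_r(\phi_N)$ (which must stay small enough for the equidistribution error to beat $N$), a balance achievable only at the price of working with $p$ slightly above $1$ rather than $p=2$.
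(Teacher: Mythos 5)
Your proposal is correct and follows essentially the same route as the paper's proof: reduce to the $L^p$ moment estimate via Lemma~\ref{lem:Weyl-Metric}, split $S_\chi\mathbbm{1}_{\cF_c} - \lambda_{\widetilde{X}}(\cF_c)$ into a truncated–smoothed main term controlled by the effective $L^2$ equidistribution of Theorem~\ref{thm:Effective}, plus error terms handled by H\"older together with Lemma~\ref{lem:Replacement_for_alpha_s}, Lemma~\ref{prop:Non-Escape}, and Proposition~\ref{prop:SmoothApproximationOrbit}, and finally balance the polynomial scales $\delta_N$ and $\varepsilon_N$ by sending $p \to 1^+$. The only material difference is cosmetic: you smooth first and truncate second (and interpolate the smoothing error between $L^1$ and the uniform $L^{1+\varepsilon_0}$ bound of Lemma~\ref{lem:Replacement_for_alpha_s}), whereas the paper truncates first and then interpolates the smoothing error between $L^1$ and the $L^\infty$ bound $\ll \delta^{-\beta_\chi d}$ furnished by the cut-off; this gives slightly different exponents $\theta_1,\theta_2$ but the same qualitative balancing and the same conclusion.
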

Using the inequality \eqref{eq:Sandwich_lem:Reduction_Birkhoof_2}, by Lemma \ref{lem:Reduction} and Proposition \ref{prop:Upper_Bound_on_Lattice_Count}, this proves the desired upper bound in \eqref{eq:Thm_Critical}. he lower bound in \eqref{eq:Thm_Critical} is shown analogously and we omit the details.

%\begin{proof} [Proof of upper bound  using Proposition \ref{prop:Upper_Bound_on_Lattice_Count}]
%By Lemma \ref{lem:Reduction}, for every $x \in X$ and $T \geq 1$, we have
%\[
%\cN_{\beta_{\chi}}(x,T) = [K \cap P : K \cap L]^{-1} \, \# \left (k_x^{-1} \cP_{\chi}  \cap \cE_{\beta_{\chi}}(T) \right ).
%\]
%By the inequality \eqref{eq:Sandwich_lem:Reduction_Birkhoof_2}, for all $\ell \in \N^*$ and $N \in \N^*$ large enough, we have
%\[
%\# \left (k_x^{-1} \cP_{\chi}  \cap \cE_{\beta_{\chi}}(e^T) \right ) \leq \sum_{i=0}^{\lceil T - \ln \widehat{c}_{\ell} \rceil - 1} S_{\chi} \mathbbm{1}_{\cF_{\widehat{c}_{\ell}^{-1}}} (a(y_i) k_x^{-1} \G) + C_0 \, \ell^{\beta_{\chi} d}.
%\]
%By Proposition \ref{prop:Upper_Bound_on_Lattice_Count}, for $\delta'$ and $(\ell_N)_{N \in \N^*}$ the sequence given by $\ell_N = \lfloor N^{\delta'} \rfloor$, there exists $\varepsilon_1 > 0$ such that, for almost every $x \in X$ and for all large enough $T \geq 1$, we have
%\[
%\sum_{i=0}^{\lceil T - \ln \widehat{c}_{\ell} \rceil - 1} S_{\chi} \mathbbm{1}_{\cF_{\widehat{c}_{\ell}^{-1}}} (a(y_i) k_x^{-1} \G) = \varkappa_1 \, T + O(T^{1-{\varepsilon_1}}). 
%\]
%Putting everything together and taking logarithms, with $\varkappa = [K \cap P : K \cap L]^{-1} \, \varkappa_1$, for almost every $x \in X$ and for all large enough $T \geq 1$, we have
%\[
%\cN_{\beta_{\chi}}(x,T) \leq \varkappa \, \ln(T) \left (1 + O(\ln(T)^{-\varepsilon_1}) \right ), 
%\]
%yielding the desired inequality.
%\end{proof}

\begin{proof} [Proof of Proposition \ref{prop:Upper_Bound_on_Lattice_Count}]
We will apply Lemma \ref{lem:Weyl-Metric} with $(Y, \nu) = (K, \mu_K)$. Let us define, for all $i \in \N^*$ and $\ell \in \N$, the function $\phi_{i, \ell} : K \rightarrow \R$  and the number $\overline{\phi}_{i,\ell}$ by
\[
\forall \, k \in K, \quad \phi_{i,\ell}(k) = S_{\chi} \mathbbm{1}_{\cF_{c_{\ell}^{-1}}}(a(y_i) k \G), \quad \overline{\phi}_{i,\ell} = \int_{\Omega} S_{\chi} \mathbbm{1}_{\cF_{c_{\ell}^{-1}}} \, \dd \mu_{\Omega}.
\]
Moreover, we define the function $Z_{i,\ell} : K \rightarrow \R$ by $Z_{i,\ell} = \phi_{i,\ell} - \overline{\phi}_{i,\ell}$. Let us also put $\overline{\phi}_{i} = \overline{\phi}_{i,1}$ (so that, $i \in \N^*$ and $\ell \in \N$, $\overline{\phi}_{i} \geq \overline{\phi}_{i,\ell}$). Fix $\ell \in \N^*$ and $\varepsilon_0 \in (0,1]$ such that the Siegel transform $S_{\chi}$ maps $B_{c}^{\infty}(\widetilde{X})$ into $L^{1+\varepsilon_0}(\Omega)$ (see Theorem \ref{thm:L1}). Let us simply write $\mathbbm{1}_{\ell}$ for $\mathbbm{1}_{\cF_{c_{\ell}^{-1}}}$. By Lemma \ref{lem:Weyl-Metric}, we need to show that there exists $p \in (1,2]$, such that 
\begin{multline} \label{eq:Prop:Upper_Bound_on_Lattice_Count}
\forall \, N \in \N^*, \, \forall \, \ell \in \N^*, \\ \int_K \left | \sum_{i = 1}^N Z_{i,\ell}(k ) \right |^p \, \dd \mu_K(k) = \int_K \left | \sum_{i = 1}^N \bigg (S_{\chi} \mathbbm{1}_{\ell}(a(y_i)k \G) - \int_\Omega S_{\chi} \mathbbm{1}_{\ell} \dd \mu_{\Omega} \bigg ) \right |^p \dd \mu_K(k) \ll N.
\end{multline}

First, we approximate the Siegel transform $S_{\chi} \mathbbm{1}_\ell$ by the $\delta$-truncated counterpart $S_{\chi}^{(\delta)} \mathbbm{1}_{\ell} = D_\delta\, S_{\chi} \mathbbm{1}_\ell$ (with $(D_{\delta})_{\delta \in (0,1)}$ as in Lemma \ref{lem:Truncated-Siegel}) with $\delta \in (0,1)$ to be determined later. That is, we would like to give an upper bound in terms of the truncation parameter $\delta$ for
\[
\Big \| \Big (S_{\chi} \mathbbm{1}_{\ell} \circ a(y) - \int_\Omega S_{\chi} \mathbbm{1}_{\ell} \, \dd \mu_{\Omega} \Big) - \Big ( S_{\chi}^{(\delta)} \mathbbm{1}_{\ell} \circ a(y) - \int_\Omega S_{\chi}^{(\delta)} \mathbbm{1}_{\ell} \, \dd \mu_{\Omega} \Big ) \Big \|_{L^p(K)}.
\]
Note that, using Theorem \ref{thm:L1} and H\"older's inequality with $p' = 1 + \varepsilon_0 > 1$ and $q' = (1 - \tfrac{1}{p'})^{-1}$, we have
\begin{align*}
\int_{\Omega}\left|S_{\chi} \mathbbm{1}_{\ell} - S_{\chi}^{(\delta)} \mathbbm{1}_{\ell} \right|\, \dd \mu_{\Omega} &= \int_{\Omega}\left| (S_{\chi} \mathbbm{1}_{\ell}) \, (1 - D_{\delta} ) \right|\, \dd \mu_{\Omega} \\
&\leq \left ( \int_\Omega (S_{\chi} \mathbbm{1}_{\ell})^{p'} \, \dd \mu_{\Omega} \right )^{\frac{1}{p'}} \mu_{\Omega} \left ( \{\lambda_\chi \leq \xi^{-1} \, \delta \}\right )^{\frac{1}{q'}} \\ &\ll_{\supp(\mathbbm{1}_{\ell}), \, p'}  \, \delta^{\beta_\chi d (1 - \frac{1}{1 + \varepsilon_0})}. 
\end{align*}
Using H\"older's inequality again with $p \in (1, 1 + \varepsilon_0/2)$, $s  = 1 + \varepsilon_0/2$,
and $q = (\tfrac{1}{p}- \tfrac{1}{s})^{-1}$, and Lemma \ref{prop:Non-Escape} with $y \geq \delta^{-\kappa}$, we thus have
\begin{align} \label{eq:EstimateTruncation}
\Big \| \Big ( S_{\chi} \mathbbm{1}_{\ell} \circ &a(y) - \int_\Omega S_{\chi} \mathbbm{1}_{\ell} \, \dd \mu_{\Omega} \Big ) - \Big ( S_{\chi}^{(\delta)} \mathbbm{1}_{\ell} \circ a(y) - \int_\Omega S_{\chi}^{(\delta)} \mathbbm{1}_{\ell} \, \dd \mu_{\Omega} \Big ) \Big \|_{L^p(K)} \nonumber \\
&\leq \left \|   S_{\chi} \mathbbm{1}_{\ell} \circ a(y)- S_{\chi}^{(\delta)} \mathbbm{1}_{\ell}\circ a(y)\right \|_{L^p(K)}+ \int_{\Omega}\left|S_{\chi} \mathbbm{1}_{\ell}-S_{\chi}^{(\delta)} \mathbbm{1}_{\ell} \right| \, \dd \mu_{\Omega}\nonumber \\
&\ll \left \| S_{\chi} \mathbbm{1}_{\ell} \circ a(y) \right \|_{L^s(K)} \, \mu_K \big (\{ k \in K  : \lambda_\chi(a(y) k \G) < \delta \} \big )^{1/q} + \delta^{\beta_\chi d (1 - \frac{1}{1+\varepsilon_0})}\nonumber \\
&\ll \delta^{\beta_\chi d ( \frac{1}{p} - \frac{1}{1 + \varepsilon_0/2})} .
\end{align}
Next, we approximate the $\delta$-truncated Siegel transform $S_{\chi}^{(\delta)} \mathbbm{1}_{\ell}$ by the $\delta$-truncated Siegel transform of the smooth compactly supported approximation function $f_{\varepsilon, \ell} = f_{\varepsilon, c_\ell}$ with $\varepsilon \in (0,1)$, as constructed in \eqref{eq:EpsilonApproximationChi}. That is, we would like to give an upper bound in terms of the truncation parameter $\delta$ and in terms of the approximation parameter $\varepsilon$ for
\[
\Big \|  \Big ( S_{\chi}^{(\delta)} \mathbbm{1}_{\ell}\circ a(y)-\int_{\Omega} S_{\chi}^{(\delta)} \mathbbm{1}_{\ell} \, \dd \mu_{\Omega} \Big ) - \Big ( S_{\chi}^{(\delta)} f_{\varepsilon, \ell} \circ a(y) - \int_{\Omega} S_{\chi}^{(\delta)} f_{\varepsilon, \ell} \, \dd \mu_{\Omega} \Big ) \Big \|_{L^p(K)}.
\]
By the mean value formula in Theorem \ref{thm:L1} and by the approximation properties of $f_{\varepsilon, \ell}$ (see Equation \eqref{eq:EpsilonApproximationChi}), we have
\begin{align*}
\int_{\Omega} \left| S_{\chi}^{(\delta)} f_{\varepsilon, \ell} - S_{\chi}^{(\delta)} \mathbbm{1}_{\ell} \right| \, \dd \mu_{\Omega} &\leq \int_{\Omega} \left| S_{\chi} f_{\varepsilon, \ell} - S_{\chi} \mathbbm{1}_{\ell} \right| \, \dd \mu_{\Omega} = \int_{\widetilde{X}} (f_{\varepsilon, \ell} - \mathbbm{1}_{\ell}) \, d \lambda_{\widetilde{X}} \ll \varepsilon.
\end{align*}
By the estimate in \eqref{eq:Replace_Schmidt} applied with $\mathbbm{1}_{\ell} -f_{\varepsilon, \ell}$, we have
\[
\forall \, g \in G, \quad | S_{\chi} (\mathbbm{1}_{\ell} -f_{\varepsilon, \ell})(g)| \ll_{\supp(\mathbbm{1}_{\ell} -f_{\varepsilon, \ell})} \, \lambda_\chi(g\G)^{-\beta_{\chi} d }.
\]
This together with the fact that $\supp(D_\delta) \subseteq \{x \in \Omega : \lambda_\chi(x) \geq \xi^{-1} \delta \}$, gives
\begin{align*}
\left \|S_{\chi}^{(\delta)} (\mathbbm{1}_{\ell} -f_{\varepsilon, \ell})\circ a(y) \right \|_{L^\infty(Y)}^{\frac{p-1}{p}} \ll \delta^{- \beta_\chi d  \frac{p-1}{p}}.
\end{align*}
Putting everything together, by Proposition \ref{prop:SmoothApproximationOrbit}, there exists $\varepsilon_1 > 0$ such that for all $y \in [(\beta_{\chi} \varepsilon)^{- \tfrac{\beta_{\chi}}{\varepsilon_1} }, + \infty)$, we have
\begin{align} \label{eq:Smooth}
\Big \|  &\Big ( S_{\chi}^{(\delta)} \mathbbm{1}_{\ell}\circ a(y)-\int_{\Omega} S_{\chi}^{(\delta)} \mathbbm{1}_{\ell} \, \dd \mu_{\Omega} \Big ) - \Big( S_{\chi}^{(\delta)} f_{\varepsilon, \ell} \circ a(y) - \int_{\Omega} S_{\chi}^{(\delta)} f_{\varepsilon, \ell} \, \dd \mu_{\Omega} \Big ) \Big \|_{L^p(K)} \nonumber \\
&\leq \left\| S_{\chi}^{(\delta)} \mathbbm{1}_{\ell}\circ a(y)- S_{\chi}^{(\delta)} f_{\varepsilon, \ell} \circ a(y) \right\|_{L^p(K)}+ \int_{\Omega}\left|S_{\chi}^{(\delta)} \mathbbm{1}_{\ell}- S_{\chi}^{(\delta)} f_{\varepsilon, \ell} \right| \, \dd \mu_{\Omega} \nonumber \\
&\leq \left \|(S_{\chi}^{(\delta)}(\mathbbm{1}_{\ell} -f_{\varepsilon, \ell}))\circ a(y) \right \|_{L^\infty(Y)}^{\frac{p-1}{p}}  \left \|(S_{\chi}^{(\delta)}(\mathbbm{1}_{\ell} -f_{\varepsilon, \ell})\circ a(y) \right \|_{L^1(Y)}^{\frac{1}{p}}+ \varepsilon \nonumber \\ 
&\ll \delta^{- \beta_\chi d \frac{p-1}{p}} \varepsilon^{\frac{1}{p}} + \varepsilon \, \ll \, \delta^{- \beta_\chi d  (1 -\frac{1}{p})} \varepsilon^{\frac{1}{p}}.
\end{align}
Finally, using Theorem~\ref{thm:Effective}, we would like to give an upper bound in terms of the rate of equidistribution on
\[
\int_K \left | \sum_{i = 1}^N \Big ( S_{\chi}^{(\delta)} f_{\varepsilon, \ell} (a(y_i) k \G)-\int_{\Omega}S_{\chi}^{(\delta)} f_{\varepsilon, \ell} \,  \dd \mu_{\Omega} \Big ) \right |^p \, \dd \mu_K(k).
\]
Note that, for every $f \in C_c^{\infty}(\widetilde{X})$, we have $S_{\chi}^{(\delta)} f \in C_c^{\infty}(\Omega)$. Let us now show that, for every $f \in C_c^{\infty}(\widetilde{X})$, we have
\begin{align} \label{eq:C_rNorm}
\forall \, r \in \N^*, \quad \cS_{r}(S_{\chi}^{(\delta)} f) \ll_{\mathrm{supp}(f)} \delta^{- \beta_\chi d } \cS_{r}(f).
\end{align}
First, for every $f \in C_c^{\infty}(\widetilde{X})$ and every differential operator $\cD$ as in \eqref{eq:Differential_Operator}, we note that $\cD ( S_{\chi} f) = S_{\chi}(\cD f)$. Then, by the point-wise upper bound estimate \eqref{eq:Replace_Schmidt} for the Siegel transform applied with $\cD f$, we have
\[
\forall \, g \in G, \quad | S_{\chi} (\cD f)(g \G)| \ll_{\supp(f)} \, \cS_{r}(f) \, \lambda_{\chi} (g\G)^{-\beta_{\chi} d }.
\]
Since $\supp(D_\delta) \subseteq \{x \in \Omega : \lambda_\chi(x) \geq \xi^{-1} \delta \}$ and $\cS_{r}(D_\delta) \ll 1$, we deduce \eqref{eq:C_rNorm}, as desired.

First, by the monotonicity of $L^p$-norms on probability spaces, 
\[
\Bigg \|  \sum_{i=1}^{N} \Big ( S_{\chi}^{(\delta)} f_{\varepsilon, \ell} \circ a(y_i) -\int_{\Omega}S_{\chi}^{(\delta)} f_{\varepsilon, \ell}\Big ) \Bigg \|_{L^p(K)} \leq \left \|  \sum_{i=1}^{N} \Big ( S_{\chi}^{(\delta)} f_{\varepsilon, \ell} \circ a(y_i) -\int_{\Omega}S_{\chi}^{(\delta)} f_{\varepsilon, \ell} \Big )\right \|_{L^2(K)}
\]
Expanding the square of the right-hand side gives
\begin{align*}
\sum_{i, j =1}^{N} &\int_{K} \Big(S_{\chi}^{(\delta)} f_{\varepsilon, \ell}(a(y_i)k \G) -\int_{\Omega}S_{\chi}^{(\delta)} f_{\varepsilon, \ell} \Big )  \Big (S_{\chi}^{(\delta)} f_{\varepsilon, \ell}(a(y_j)k \G) -\int_{\Omega} S_{\chi}^{(\delta)} f_{\varepsilon, \ell} \Big ) \dd\mu_K 
\\
&\leq \sum_{i, j =1}^{N} \Bigg | \int_{K} S_{\chi}^{(\delta)} f_{\varepsilon, \ell}(a(y_i)k \G) \, S_{\chi}^{(\delta)} f_{\varepsilon, \ell}(a(y_j)k \G) \dd\mu_K - \Big ( \int_{\Omega}S_{\chi}^{(\delta)} f_{\varepsilon, \ell} \Big )^2 \Bigg | \\
& \quad\quad\quad + \, 2 N \sum_{i =1}^{N} \Big ( \int_{\Omega} S_{\chi}^{(\delta)} f_{\varepsilon, \ell} \Big ) \Bigg | \int_{K} S_{\chi}^{(\delta)} f_{\varepsilon, \ell}(a(y_i)k \G) - \int_{\Omega}S_{\chi}^{(\delta)} f_{\varepsilon, \ell} \Bigg |.
\end{align*}
Note that the sum $\sum_{i,j = 1}^N$ is symmetric in $i$ and $j$, and we split it into the diagonal and the off-diagonal part. Recall that $y_i = e^{\beta_{\chi} i}$. By Theorem~\ref{thm:Effective} on the effective single and double equidistribution of translated $K$-orbits, there exists a constant $c>0$ such that this is bounded by an absolute constant times
\[
2 \sum_{1 \leq i < j \leq N} \, \min \{y_i, y_j / y_i \}^{-c} \cS_r \big ( S_{\chi}^{(\delta)} f_{\varepsilon, \ell} \big )^2 \, + \, \sum_{i =1}^{N} \cS_r \big ( S_{\chi}^{(\delta)} f_{\varepsilon, \ell} \big )^2 \, + \, 2 N \sum_{i = 1}^N y_i^{-c} \cS_r \big ( S_{\chi}^{(\delta)} f_{\varepsilon, \ell} \big ),
\]
where we used also that $\int_{\Omega} S_{\chi}^{(\delta)} f_{\varepsilon, \ell} \ll 1$. Noting that $\sum_{i = 1}^{\infty} y_i^{-c} < + \infty$ and using the estimate \eqref{eq:C_rNorm} for the Sobolev norm, the inequality $\cS_r(f_{\varepsilon, \ell}) \ll \varepsilon^{-r}$ as in \eqref{eq:EpsilonApproximationChi}, this is essentially bounded by
\begin{equation*} 
2 \sum_{1 \leq i < j \leq N} \, \min \{y_i, y_j / y_i \}^{-c} \delta^{- 2 \beta_\chi d } \varepsilon^{- 2 r} \, + \, N \delta^{- 2 \beta_\chi d } \varepsilon^{- 2 r} \, + \, 2 N \delta^{- \beta_\chi d } \varepsilon^{-r}.
\end{equation*}
Since $\delta, \varepsilon \in (0,1)$, we have $\delta^{- 2\beta_\chi d } \varepsilon^{-2r} \geq \delta^{- \beta_\chi d } \varepsilon^{-r}$. Let us show that 
\begin{equation} \label{eq:}
\sum_{1 \leq i < j \leq N} \, \min \{y_i, y_j / y_i \}^{-c} \, \ll \, N.
\end{equation}
We split the sum into the part where $y_i \leq y_j / y_i$ and the remainder. We have $y_i \leq y_j / y_i$ if and only if $2i \leq j$ and hence
\[
\sum_{\substack{1 \leq i < j \leq N \\ 2i \leq j}} y_i^{-c} \, \leq \sum_{i=1}^{\lceil N/2 \rceil} \sum_{j=2i}^{N} y_i^{-c} =  \sum_{i=1}^{\lceil N/2 \rceil} (N- 2i +1) y_i^{-c} \, \ll \, N.
\]
For the remaining part, using a change of variable and the fact that $y_j/y_i = y_{j-i}$,
\[
\sum_{\substack{1 \leq i < j \leq N \\ 2i > j}} (y_j / y_i)^{-c} \, \leq \, \sum_{1 \leq i < j \leq N} (y_j / y_i)^{-c} \leq \sum_{k = 1}^{N-1} \sum_{\ell = 1}^{N-k} y_k^{-c} \, \ll \, N.
\]
Hence, putting all the estimates together, we have shown that 
\begin{equation} \label{eq:EstimateEquidistribution}
\Bigg \|  \sum_{i=1}^{N} \Big ( S_{\chi}^{(\delta)} f_{\varepsilon, \ell}\circ a(y_i) -\int_{\Omega}S_{\chi}^{(\delta)} f_{\varepsilon, \ell}\Big ) \Bigg \|_{L^p(K)} \ll \delta^{- \beta_{\chi} d} \, \varepsilon^{-r} \, N^{1/2}.
\end{equation}
Let $\kappa$ be as in Proposition \ref{prop:Non-Escape} and $\varepsilon_1$ as in Proposition \ref{prop:SmoothApproximationOrbit}. We wish to choose suitable $\lambda_1, \lambda_2 > 0$ and put $\delta = N^{-\lambda_1}$ and $\varepsilon = N^{-\lambda_2}$. Let us determine them by a heuristic argument first. So assume for now that Propositions \ref{prop:Non-Escape} and \ref{prop:SmoothApproximationOrbit} hold for all $y \geq 1$. Then, using Minkowski's inequality and combining the estimates \eqref{eq:EstimateTruncation}, \eqref{eq:Smooth}, and \eqref{eq:EstimateEquidistribution}, we have
\begin{multline} \label{eq:AltogetherBound} 
\left \| \sum_{i=1}^{N} \left( S_{\chi} \mathbbm{1}_{\ell} \circ a(y_i) -\int_{\Omega}S_{\chi} \mathbbm{1}_{\ell} \, \dd \mu_{\Omega} \right) \right \|_{L^p(K)} \\ \ll  N(\delta^{\beta_\chi d ( \frac{1}{p} - \frac{1}{1 + \varepsilon_0/2})} + \delta^{- \beta_\chi d (1 - \frac{1}{p})} \varepsilon^{\frac{1}{p}}) + N^{1/2}\delta^{- \beta_\chi d }\varepsilon^{-r}.
\end{multline} 
By setting 
\begin{equation} \label{eq:Choice}
\delta = N^{-\frac{1}{2 \beta_\chi d (\frac{1}{p} - \frac{1}{1+ \varepsilon_0/2} + 1 + rp(1-\frac{1}{1+ \varepsilon_0/2}))}} \quad \text{ and } \quad \varepsilon = \delta^{\beta_\chi d p (1 - \frac{1}{1+\varepsilon_0/2})}
\end{equation}
the exponents of the three terms on the right-hand side in \eqref{eq:AltogetherBound} match.
Hence we let 
\[
\lambda_1 = \frac{1}{2 \beta_\chi d (\frac{1}{p} - \frac{1}{1+ \varepsilon_0/2} + 1 + rp(1-\frac{1}{1+ \varepsilon_0/2}))}
\]
and
\[
\lambda_2 = \frac{p (1 - \frac{1}{1+\varepsilon_0/2})}{2 (\frac{1}{p} - \frac{1}{1+ \varepsilon_0/2} + 1 + rp(1-\frac{1}{1+ \varepsilon_0/2}))}.
\]
Now having fixed $\lambda_1$ and $\lambda_2$, let us show that (up to a multiplicative constant) \eqref{eq:AltogetherBound} holds true.
We recall that, by Lemma \ref{lem:Replacement_for_alpha_s}, there exists an absolute constant $C_0 \geq 1$ such that, for every $c \in [1/2,2/3]$, we have a uniform upper bound
\[ 
\sup_{y \geq 1} \int_{K} \big | S_{\chi} \mathbbm{1}_{\cF_c} ( a(y) k \G) \big |^p \, \dd \mu_K(k) \, \leq \, C_0. 
\]
We apply this estimate whenever $y_i = e^{\beta_{\chi} i}$ is too small to satisfy the assumptions of Propositions \ref{prop:Non-Escape} or \ref{prop:SmoothApproximationOrbit}, that is, by taking the natural logarithm, if $i$ is smaller than a constant $C_1 > 0$ (that depends on $\lambda_1$, $\lambda_2$, $\kappa$, $\varepsilon_1$, $\beta_{\chi}$) times $\ln(N)$. Using Minkowski's inequality and combining \eqref{eq:EstimateTruncation}, \eqref{eq:Smooth}, and \eqref{eq:EstimateEquidistribution}, we have
\begin{align*}
&\left \| \sum_{i=1}^{N} \left( S_{\chi} \mathbbm{1}_{\ell} \circ a(y_i) - \int_{\Omega}S_{\chi} \mathbbm{1}_{\ell}\right)\right \|_{L^p(K)} \ll \left ( \sum_{\substack{i= 1\\ i \geq C_1 \ln(N)}}^{N} \delta^{\beta_\chi d ( \frac{1}{p} - \frac{1}{1 + \varepsilon_0/2})} \right ) + \left ( \sum_{\substack{i= 1\\ i \leq C_1 \ln(N)}}^{N} C_0 \right ) \\
&\quad \quad + \left ( \sum_{\substack{i= 1\\ i \geq C_1 \ln(N)}}^{N} \delta^{- \beta_\chi d  (1 -\frac{1}{p})} \varepsilon^{\frac{1}{p}}\right ) + \left ( \sum_{\substack{i= 1\\ i < C_1 \ln(N)}}^{N} C_0 \right ) + \delta^{- \beta_{\chi} d} \, \varepsilon^{-r} \, N^{1/2} \\
&\quad \ll  N(\delta^{\beta_\chi d ( \frac{1}{p} - \frac{1}{1 + \varepsilon_0/2})} + \delta^{- \beta_\chi d (1 - \frac{1}{p})} \varepsilon^{\frac{1}{p}}) + N^{1/2}\delta^{- \beta_\chi d }\varepsilon^{-r}.
\end{align*}
Plugging in the formulas for $\delta$ and $\varepsilon$, we have
\begin{equation} \label{eq:AltogetherBound-2}
\left \| \sum_{i=1}^{N} \left( S_{\chi} \mathbbm{1}_{\ell} \circ a(y_i) -\int_{\Omega}S_{\chi} \mathbbm{1}_{\ell}\right) \right \|_{L^p(K)} \ll  N^{1 -\frac{ (\frac{1}{p} - \frac{1}{1+\varepsilon_0/2})}{2 (\frac{1}{p} - \frac{1}{1+ \varepsilon_0/2} + 1 + rp(1-\frac{1}{1+ \varepsilon_0/2}))}}.
\end{equation}
Consider the function $h : (1,1+\varepsilon_0/2) \rightarrow \R$ given by
\[
h(p) = 1 -\frac{ (\frac{1}{p} - \frac{1}{1+\varepsilon_0/2})}{2 (\frac{1}{p} - \frac{1}{1+ \varepsilon_0/2} + 1 + rp(1-\frac{1}{1+ \varepsilon_0/2}))}. 
\]
This function satisfies $\lim_{p \rightarrow 1} h(p) < 1$. 
Let $p \in (1,1+\varepsilon_0/3]$ be maximal with the property that $h(p) \leq \frac{1}{p}$. Then we have
\[
\left \| \sum_{i=1}^{N} \left( S_{\chi} \mathbbm{1}_{\ell}-\int_{\Omega}S_{\chi} \mathbbm{1}_{\ell}\right)\circ a(y_i) \right \|_{L^p(K)}^p \ll N.
\]
Let $\delta' > 0$ and, for every $N \in \N^*$, let $\ell_N = \lfloor N^{\delta'} \rfloor$. By Lemma \ref{lem:Weyl-Metric}, for every $\varepsilon'>0$ and for almost every $k \in K$, we have
\[
\forall \, N \in \N^*, \quad \sum_{i=0}^{N-1} S_{\chi} \mathbbm{1}_{\cF_{\widehat{c}_{\ell}^{-1}}} (a(y_i) k \G) = \int_{\Omega} S_{\chi} \mathbbm{1}_{\cF_{c_\ell^{-1}}} \, \dd \mu_{\Omega} \, N + O\left( N^{\frac{2}{p+1} + \varepsilon'}\right ).
\]
By the mean value formula in Theorem \ref{thm:L1}, we have
\[
\int_{\Omega} S_{\chi} \mathbbm{1}_{\cF_{c_\ell^{-1}}} \, \dd \mu_{\Omega} = \lambda_{\widetilde{X}}\left (\cF_{c_\ell^{-1}} \right ).
\]
There exists a constant $\kappa' > 0$ such that $\lambda_{\widetilde{X}}\left (\cF_{c_\ell^{-1}} \right ) = \lambda_{\widetilde{X}}\left (\cF_{1} \right ) + O(\ell^{-\kappa'})$. Plugging this into the inequality \eqref{eq:Sandwich_lem:Reduction_Birkhoof_2}, we have, for all $N \in \N^*$,
\begin{multline*}
\sum_{i=0}^{N-1} S_{\chi} \mathbbm{1}_{\cF_{\widehat{c}_{\ell}^{-1}}} (a(y_i) k \G) + C_0 \, \ell_N^{\beta_{\chi} d} \\
= \lambda_{\widetilde{X}}\left (\cF_{1} \right ) \, N + O(\lfloor N^{\delta'} \rfloor^{-\kappa'}) + O\left( N^{\frac{2}{p+1} + \varepsilon'}\right ) + O \left ( \lfloor N^{\delta'} \rfloor^{\beta_{\chi} d} \right ).
\end{multline*}
This completes the proof of Proposition \ref{prop:Upper_Bound_on_Lattice_Count} with 
\[
\delta' = \frac{1}{\beta_{\chi} d} \frac{2}{p+1}, \quad  \varepsilon = 1 - \frac{2}{p+1}, \quad \text{and} \quad \varkappa_1 = \lambda_{\widetilde{X}}(\cF).
\]
\end{proof}

\bibliographystyle{plain}

\end{document}